\newtheorem{theorem}{Theorem}
\newtheorem{lemma}[theorem]{Lemma}
\newtheorem{proposition}[theorem]{Proposition}
\newtheorem{corollary}[theorem]{Corollary}
\newtheorem{claim}{Claim}[theorem]
\theoremstyle{definition}
\newtheorem{problem}[theorem]{Problem}
\newtheorem{conjecture}[theorem]{Conjecture}
\newcommand{\AY}[1]{{#1}}
\newcommand{\ra}{\rightarrow}
\newcommand{\induce}[2]{#1\langle #2 \rangle}
\newcommand{\jbj}[1]{{#1}}
\DeclareMathOperator{\Out}{Out}
\DeclareMathOperator{\In}{In}
\newcommand{\2}{\vspace{0.2cm}}
\newenvironment{subproof}{\par\noindent {\it Proof}.\ }{\hfill$\lozenge$\par\vspace{11pt}}
\title{Arc-disjoint in- and out-branchings in digraphs of independence
  number at most 2\thanks{Research supported by the Independent
    Research Fond Denmark under grant number DFF 7014-00037B, by PICS
    project DISCO and by research grant ANR DIGRAPHS n.194718.}}
\author{J. Bang-Jensen\thanks{Department of Mathematics and Computer
    Science, University of Southern Denmark, Odense, Denmark (email:
    jbj@imada.sdu.dk). \jbj{Part of this work was done while the author was visiting LIRMM, Universit\'e de
    Montpellier as well as  INRIA Sophia Antipolis. Hospitality and financial support by both is greatefully acknowledged. Ce travail a b\'en\'efici\'e d'une aide du gouvernement français, 
g\'er\'ee par l'Agence Nationale de la Recherche au titre du projet Investissements d’Avenir UCAJEDI portant la r\'ef\'erence no ANR-15-IDEX-01.}}\and S. Bessy\thanks{LIRMM, CNRS, Universit\'e de
    Montpellier, Montpellier, France (email:stephane.bessy@lirmm.fr).}\and F. Havet\thanks{CNRS, Universit\'e
    C\^ote d'Azur, I3S and INRIA, Sophia Antipolis, France (email:
    frederic.havet@inria.fr).}\and A. Yeo\thanks{Department of
    Mathematics and Computer Science, University of Southern Denmark,
    Odense, Denmark and  Department of Mathematics, University of Johannesburg, Auckland Park, 2006 South Africa (email: yeo@imada.sdu.dk). Part of this work was done while the author was visiting LIRMM, Universit\'e de
    Montpellier. Hospitality and financial support is greatefully acknowledged }}
\begin{document}

\maketitle

\begin{abstract}
We prove that every digraph of independence number at most 2 and
arc-connectivity at least 2 has an out-branching $B^+$ and an
in-branching $B^-$ which are arc-disjoint (we call such branchings good pair).
 This is best possible in
terms of the arc-connectivity as there are infinitely many strong digraphs
with independence number 2 and arbitrarily high minimum in-and
out-degrees that have good no pair. The
result settles a conjecture by Thomassen for digraphs of independence
number 2. We prove that every digraph on at most 6 vertices and arc-connectivity at least 2 has a good pair  and give an example of a 2-arc-strong digraph $D$  on 10 vertices 
with independence
number 4  that has no
good pair. We also show that there are infinitely many digraphs with
independence number 7 and arc-connectivity 2 that have no
good pair. Finally we pose a number of open problems.\\
\noindent{}{\bf Keywords:} Arc-disjoint branchings, out-branching, in-branching, digraphs of independence number 2, arc-connectivity.
\end{abstract}
\bibliographystyle{plain}

\section{Introduction}

It is a well-known result due to Nash-Williams and Tutte
\cite{nashwilliamsJLMS36,tutteJLMS36} that every $2k$-edge-connected
graph has a set of $k$ edge-disjoint spanning trees. For digraphs,
there are many possible analogues of a spanning tree. The two most
natural ones are out-branchings and in-branchings. An {\bf
  out-branching} ({\bf in-branching}) of a digraph $D$ is a spanning
tree in the underlying graph of $D$ whose edges are oriented in $D$
such that every vertex except one, called the root, has in-degree
(out-degree) one. Edmonds \cite{edmonds1973} characterized digraphs
with $k$ arc-disjoint out-branchings with prescribed roots. Lov\'asz
\cite{lovaszJCT21} gave an algorithmic proof of Edmonds' result which
implies that one can check in polynomial time, for each fixed natural
number $k$, whether a given digraph has a collection of $k$
arc-disjoint out-branchings (roots not specified). \\

No good characterization is known for digraphs having an out-branching
and an in-branching which are arc-disjoint and very likely none
exists, due to the following result.

\begin{theorem}[Thomassen~\cite{bangJCT51}]
  \label{thm:CTNPC}
It is NP-complete to decide whether a given digraph $D$ has an
out-branching and an in-branching both rooted at the same vertex such
that these are arc-disjoint.
\end{theorem}

This implies that it is also NP-complete to decide if a digraph has
any out-branching which is arc-disjoint from some in-branching, see
Theorem \ref{thm:disjBNPC}.  The same conclusion holds already for
2-regular digraphs \cite{bangDAM161a}.

Thomassen also conjectured that every digraph of sufficiently high
arc-connectivity should have such a pair of branchings. His conjecture
was for branchings with the same root, but as we show in Proposition
\ref{prop:noBranch}, the conjecture is equivalent to the following.

\begin{conjecture}[Thomassen~\cite{thomassen1989}]
  \label{conj:CTbranch}
There is a constant $C$, \AY{such that} every digraph with arc-connectivity at least $C$ has an
out-branching and an in-branching which are arc-disjoint.
\end{conjecture}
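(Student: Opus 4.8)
The plan is to obtain the two branchings from high arc-connectivity via Edmonds' theorem and then to force arc-disjointness by a cut argument. Fix a root $r$. If $\lambda(D)\ge C$, then every proper nonempty set has at least $C$ entering and $C$ leaving arcs, so Edmonds' theorem gives $C$ arc-disjoint out-branchings rooted at $r$, and applying it to the reverse digraph gives $C$ arc-disjoint in-branchings rooted at $r$. The obstacle is that the out-family and the in-family may share many arcs, so the real task is to select one member from each that avoid one another. I would reformulate this in cut language: an out-branching rooted at $r$ must, for every $S$ with $r\notin S$, use at least one arc entering $S$, while an in-branching rooted at $r$ must use at least one arc leaving $S$. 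For a fixed $S$ these two arc sets are disjoint, so no single cut can by itself force a conflict; all the difficulty lies in the simultaneous, global choice across all sets $S$.

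Concretely, I would try to build $B^+$ and $B^-$ together so that at every cut the unused arcs still certify reachability both from and to $r$. By Menger, $D-A(B^+)$ admits an in-branching rooted at $r$ if and only if every vertex still reaches $r$, i.e. if and only if every nonempty $S\subseteq V\setminus\{r\}$ keeps a leaving arc after deleting $A(B^+)$; the symmetric statement holds for $B^-$. Because $\lambda(D)\ge C$ guarantees $d^+(S),d^-(S)\ge C$, one has a budget of $C$ arcs across every cut to distribute between the two branchings, and the natural hope is that a submodular-flow or matroid-intersection formulation (out-branchings rooted at $r$ are exactly the common bases of the graphic matroid and the in-degree partition matroid) lets one route $B^+$ through entering arcs and $B^-$ through leaving arcs of each tight cut without collision.

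The step I expect to be the main obstacle is exactly this reconciliation. Theorem~\ref{thm:CTNPC} shows that, with no extra structure, deciding whether the global choice can be made consistent is NP-complete, so a purely connectivity-based budget argument cannot by itself pin down $C$; this is precisely why the conjecture is hard in full generality. My strategy for making genuine progress is therefore to add a structural hypothesis that tames the cuts, namely a bound on the independence number: when $\alpha(D)\le 2$, every three vertices span an arc, so cuts cannot be \emph{thin} in the adversarial way the NP-hardness exploits, and one can hope to carry out the simultaneous construction with $C=2$. I would establish the conjecture in that regime first, and treat the unrestricted statement as the genuinely hard residual problem that the connectivity budget alone does not resolve.
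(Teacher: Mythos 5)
There is a genuine gap here, and it starts with the status of the statement itself: Conjecture~\ref{conj:CTbranch} is an \emph{open conjecture}, and the paper contains no proof of it --- so no blind proof attempt could be checked against ``the paper's own proof.'' Your text, to its credit, is honest about this: after setting up Edmonds' theorem, the cut reformulation, and the matroid-intersection description of out-branchings, you explicitly identify the simultaneous global choice as the unresolved obstacle and then retreat to the hypothesis $\alpha(D)\le 2$. But that retreat means you have not proved the statement; you have proposed proving Theorem~\ref{mainX} instead, and you do not carry out that proof either. The hoped-for mechanism (a submodular-flow or matroid-intersection formulation that routes $B^+$ through entering arcs and $B^-$ through leaving arcs of each tight cut) is not known to exist: matroid intersection captures a \emph{single} branching, and coupling two branchings disjointly is exactly the problem shown NP-complete in Theorems~\ref{thm:CTNPC} and~\ref{thm:disjBNPC}, so one should not expect a clean polyhedral model for the pair. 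Moreover, the ``budget of $C$ arcs across every cut'' is necessary but demonstrably far from sufficient: the digraph $H_4$ of Figure~\ref{fig:H4} is $2$-arc-strong with no good pair, and Theorem~\ref{thm:nopairB} gives infinitely many $2$-arc-strong examples with $\alpha\le 7$ and no good pair, so at $C=2$ the cut budget alone proves nothing without a structural hypothesis.

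For comparison, the route the paper actually takes in the regime you target ($\alpha(D)\le 2$, $C=2$) is entirely combinatorial and quite different from your flow-based sketch: it first classifies good $r$-pairs in semicomplete digraphs (Theorem~\ref{nonEXCEPTION}, with the $4$-exceptions and the exception pairs $(D,r)$), handles small digraphs and co-bipartite digraphs separately (Proposition~\ref{prop:small}, Theorem~\ref{cobipartite}), and then proves Theorem~\ref{mainX} by a sequence of claims bounding the clique number to $3$, forcing $2$-cycles to be vertex-disjoint and rare, pinning $7\le n\le 8$ via Ramsey-type counting (Theorem~\ref{thm:Ramsey}), and finishing with the hamiltonian path guaranteed by the Chen--Manalastras theorem (Theorem~\ref{thm:CMthm}): one branching is the path $P$, the other is assembled from the strong components of $D$ minus the arcs of $P$. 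None of Edmonds' theorem, Menger-style cut certification, or matroid intersection appears in that argument. If you want to pursue your program, the concrete missing ingredient is a lemma converting the per-cut budget into a consistent global selection under $\alpha\le 2$; the paper's exceptions (Figure~\ref{fig:ST4}, the digraph $E_4$) show such a lemma would need to see more than cut sizes, since those obstructions satisfy all the degree and connectivity budgets your sketch relies on.
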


Conjecture \ref{conj:CTbranch} has been verified for semicomplete
digraphs \cite{bangJCT51} and for locally semicomplete digraphs
\cite{bangJCT102}. In both cases arc-connectivity 2 suffices. For
general digraphs the conjecture is wide open and as far as we know it
is not known whether already $C=3$ would suffice
in Conjecture \ref{conj:CTbranch} \AY{(Figure~\ref{fig:H4} below shows that $C=2$ is not sufficient)}. In this paper, we prove the
conjecture for digraphs of independence number 2, where it suffices to
have arc-connectivity at least 2. We also show that there is no lower
bound on the minimum in- and out-degree that suffices to guarantee
that a strongly connected digraph with independence number 2 has such branchings.

We provide an example to show that arc-connectivity 2 is not
sufficient to guarantee that a digraph with independence number 2 has
an out-branching rooted at a prescribed vertex $s$ which is
arc-disjoint from an in-branching rooted a prescribed vertex $t$ for
every choice of vertices $s,t$. We also show that there are infinitely
many digraphs with independence number 3 and arc-connectivity 2 which
do not have arc-disjoint \AY{out- and in-branchings, $B_s^+,B_t^-$, rooted at some given
$s$ and $t$, respectively}. Using this we construct an infinite family of digraphs
with independence number 7 and arc-connectivity 2 which have no
out-branching which is arc-disjoint from some in-branching. \jbj{We also show that every 2-arc-strong digraph on at most 6 vertices has an out-branching which is arc-disjoint from some in-branching. Finally, we pose a number of open problems. }

\section{Notation and preliminaries}

Notation not given below follows \cite{bang2018,bang2009}. The
digraphs in this paper have no loops and no multiple arcs.  Let
$D=(V,A)$ be a digraph.  Let $X,Y\subset V$ be two sets of
vertices. If $D$ contains the arc $xy$ for every choice of $x\in X$
and $y\in Y$, then we write $X\ra Y$.  If moreover, there is no arc
with tail in $Y$ and head in $X$, then we write $X\mapsto Y$.

If $D'$ is a subdigraph of $D$ and $uv$ an arc of $D$, then we denote
by $D'+uv$ the digraph with vertex set $V(D')\cup \{u,v\}$ and arc set
$A(D')\cup \{uv\}$.

For a non-empty subset $X\subset V$ we denote by $d_D^+(X)$
(resp. $d_D^-(X)$) the number of arcs with tail (resp. head) in $X$ and head
(resp. tail) in $V-X$. We call $d_D^+(X)$ (resp. $d_D^-(X)$) the {\bf out-degree}
(resp. {\bf in-degree}) of the set $X$. Note that $X$ may be just a
vertex. We will drop the subscript when the digraph is clear from the
context. We denote by $\delta^0(D)$ the minimum over all in- and
out-degrees of vertices of $D$. This is also called the minimum {\bf
  semidegree} of a vertex in $D$.  The {\bf arc-connectivity} of $D$,
denoted by $\lambda{}(D)$, is the minimum out-degree of a proper subset
of $V$. A digraph is strongly connected (or just {\bf strong}) if
$\lambda{}(D)\geq 1$.

In- and out-branchings were defined above. We denote by $B_s^+$
\AY{(respectively $B_t^-$) an out-branching rooted at $s$
(respectively an in-branching rooted at $t$)}. We also use $B^+$ or $O$
(resp. $B^-$ or $I$) to denote an out-branching (resp. in-branching) with no root
specified.  
\begin{proposition}
  \label{prop:noBranch}
  Let $R$ be a natural number. If there exists be a digraph $H$ with
  $\lambda{}(H)=R$ which has two (possibly equal) vertices $s$ and $t$
  such that $H$ has no pair of arc-disjoint branchings $B_s^+$, $B_t^-$,
  then there exists a digraph $U$ with $\lambda{}(U)=R$ which has no
  out-branching which is arc-disjoint from some in-branching.
\end{proposition}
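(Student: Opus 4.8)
The plan is to obtain $U$ from $H$ by attaching two \emph{root-forcing gadgets}, one at $s$ and one at $t$, engineered so that the following three properties hold: (i) $\lambda(U)=R$; (ii) for every out-branching $O$ of $U$, the arcs of $O$ lying inside $H$ form an out-branching of $H$ rooted at $s$; and (iii) for every in-branching $I$ of $U$, the arcs of $I$ lying inside $H$ form an in-branching of $H$ rooted at $t$. Granting such a $U$, the statement follows at once: if $U$ had an out-branching $O$ arc-disjoint from an in-branching $I$, then restricting both to $A(H)$ would yield an out-branching $B_s^+$ and an in-branching $B_t^-$ of $H$, and these remain arc-disjoint because they live in the common arc set $A(H)$ on which $O$ and $I$ are already disjoint. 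This contradicts the assumption that $H$ has no such pair, so $U$ has no out-branching arc-disjoint from any in-branching.

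First I would address why a gadget is needed at all: since $\lambda(U)\geq R\geq 1$ forces $U$ to be strong, every vertex of $U$ is the root of some out-branching and of some in-branching, so a root cannot be pinned down by reachability alone. The forcing must instead come from the interaction between the spanning requirement and the arc-disjointness of the pair. The gadget at $s$ is therefore designed so that its vertices can be incorporated into an out-branching and, simultaneously, into an arc-disjoint in-branching in essentially only one way, and that way compels the out-branching to enter $V(H)$ through $s$; symmetrically, the gadget at $t$ compels the in-branching to leave $V(H)$ through $t$. To respect (i), each gadget is made internally $R$-arc-strong and is joined to $H$ by just enough arcs that no cut of $U$ drops below $R$, while a minimum cut of value $R$ inherited from $H$ guarantees that $\lambda(U)$ is not larger than $R$.

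For the converse direction, needed to confirm that the reduction is faithful (and hence that $U$ is a genuine witness rather than vacuously without a good pair), I would check that any arc-disjoint pair $B_s^+, B_t^-$ of $H$ extends through the gadget arcs to an out-branching and an arc-disjoint in-branching of $U$; this is where the gadget arcs used by the two branchings must be kept disjoint by design. The main obstacle is exactly this simultaneous engineering: forcing the out-root to $s$ and the in-root to $t$ while (a) keeping $\lambda(U)$ equal to $R$ and (b) ensuring the gadget arcs can be partitioned between the out- and the in-branching. The bookkeeping is most delicate in the case $s=t$, where a single vertex must serve both as the forced entry point for the out-branching and as the forced exit point for the in-branching, and one must verify that the two gadgets can coexist without sharing arcs.
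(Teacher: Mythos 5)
There is a genuine gap: your argument is a plan for a construction rather than a construction. The entire burden of the proposition lies in exhibiting the ``root-forcing gadgets,'' and you never do so; you explicitly name the simultaneous engineering of properties (i)--(iii) as ``the main obstacle'' and leave it unresolved. Worse, properties (ii) and (iii) as you state them are unachievable: since $U$ must be strong, every vertex of $U$ --- in particular every vertex of $V(H)\setminus\{s\}$ --- is the root of some out-branching $O$ of $U$, and for such an $O$ the arcs inside $H$ leave both the root and $s$ with in-degree $0$, so they cannot form an out-branching of $H$ rooted at $s$. You notice this tension yourself (``a root cannot be pinned down by reachability alone'') but then continue to rely on the universally quantified versions of (ii) and (iii). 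A correct argument must restrict the quantifier: one only needs that out-branchings rooted at certain designated vertices restrict to $B_s^+$, and then a separate device to force the roots to be such vertices. Finally, the ``converse direction'' you propose to check (that good pairs of $H$ extend to $U$) is irrelevant and in fact vacuous here: by hypothesis $H$ has no arc-disjoint $B_s^+,B_t^-$, and the goal is precisely a $U$ with no good pair, so there is nothing to be ``faithful'' to.

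For comparison, the paper resolves both difficulties concretely. It first forms $H'$ by adding a complete digraph $X$ on $R$ vertices with all arcs from $V(X)$ to $s$ and all arcs from $t$ to $V(X)$ (if $s=t$, take $H'=H$); because the only arcs between $X$ and $H$ are $X\rightarrow s$ and $t\rightarrow X$, any arc-disjoint pair $B_x^+,B_x^-$ with $x\in X$ would restrict to the forbidden arc-disjoint pair $B_s^+,B_t^-$ in $H$, so $H'$ has no good $x$-rooted pair, and $\lambda(H')=R$ is preserved. The remaining problem --- that a good pair of $H'$ might have roots elsewhere --- is handled not by a gadget but by taking $U$ to be three copies of $H'$ identified at $x$: for any roots $u,v$, some copy contains neither (except at the cut vertex $x$), and the restrictions of the two branchings to that copy form arc-disjoint branchings $B_x^+,B_x^-$ of $H'$, a contradiction. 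You would need to supply something playing the role of both of these steps before your outline becomes a proof.
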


\begin{proof}
Let $H$ as above be given. If $s=t$, then let $H'=H$ and otherwise we
obtain $H'$ by adding a copy $X$ of the complete digraph on $R$
vertices, all possible arcs from $V(X)$ to $s$ and all possible arcs
from $t$ to $V(X)$.  It is easy to check that $\lambda{}(H')=R$ and
that $H'$ has no pair of arc-disjoint branchings $B_x^+,B_x^-$ where
$x\in X$. Now if $s=t$ take $x=s=t$ and if $s\neq t$ fix one vertex
$x\in X$. Let $U$ be the digraph that we obtain from three disjoint
copies of $H'$ by identifying the copies of $x$ in these. Then
$\lambda{}(U)=\lambda{}(H')=R$ and $U$ has no pair of arc-disjoint
branchings $B_u^+,B_v^-$ for any choice of vertices $u,v$. This
follows from the fact that $U$ could only have such branchings if one copy of $H'$
would have arc-disjoint branchings $B_x^+,B_x^-$.
\end{proof}

The following is an easy consequence of Theorem \ref{thm:CTNPC} and
the construction above.

\begin{theorem}
  \label{thm:disjBNPC}
  It is NP-complete to decide if a given digraph given has an
  out-branching and in-branching which are arc-disjoint.
  \end{theorem}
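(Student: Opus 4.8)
The plan is to prove that the problem both belongs to NP and is NP-hard, with the hardness following from a polynomial reduction from the prescribed-root problem of Theorem~\ref{thm:CTNPC}. Membership in NP is immediate: a pair consisting of an out-branching $B^+$ and an in-branching $B^-$ is a certificate of polynomial size, and one can verify in polynomial time that $B^+$ is an out-branching, that $B^-$ is an in-branching, and that $A(B^+)\cap A(B^-)=\emptyset$. So the substance is the reduction.

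For the hardness I would reduce from the prescribed-root problem shown NP-complete in Theorem~\ref{thm:CTNPC}: given a digraph $D$ and a vertex $s$, decide whether $D$ has arc-disjoint branchings $B_s^+$ and $B_s^-$. Given such an instance $(D,s)$, I apply the construction from the proof of Proposition~\ref{prop:noBranch} with $H=D$ and $s=t$: since the two prescribed roots coincide, we set $H'=H=D$ and $x=s$, and let $U$ be the digraph obtained from three disjoint copies of $D$ by identifying their three copies of $s$ into a single vertex $x$. This is clearly computable in time polynomial in the size of $D$.

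It then remains to establish the equivalence that $U$ has an out-branching which is arc-disjoint from some in-branching if and only if $D$ has arc-disjoint branchings $B_s^+,B_s^-$. For the ``if'' direction, taking $B_s^+$ in each copy and gluing them at $x$ yields an out-branching of $U$ rooted at $x$, and likewise the three copies of $B_s^-$ yield an in-branching of $U$ rooted at $x$; these are arc-disjoint, since distinct copies share the vertex $x$ but no arc. For the ``only if'' direction I rely on the observation already used in the proof of Proposition~\ref{prop:noBranch}: as $x$ is the only vertex common to the three copies, any out-branching of $U$ must enter each copy not containing its root through $x$, so its restriction to such a copy is an out-branching of that copy rooted at $x$; the same holds for in-branchings. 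Since there are three copies but only two prescribed roots, some copy $C$ contains neither root apart from the common vertex $x$, and the restrictions to $C$ of an arc-disjoint out-/in-branching pair of $U$ form arc-disjoint branchings $B_x^+,B_x^-$ in $C$, that is, arc-disjoint $B_s^+,B_s^-$ in $D$.

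Putting these together, the reduction is correct and polynomial, so the problem is NP-hard, and hence NP-complete. The one point requiring care, and the step I expect to be the main obstacle, is the ``only if'' direction: making rigorous that an out-branching (respectively in-branching) of $U$ restricts to a branching rooted at $x$ on every copy avoiding its root. This is precisely the cut argument based on $x$ being the unique shared vertex that underlies Proposition~\ref{prop:noBranch}, and it may simply be invoked here rather than reproved.
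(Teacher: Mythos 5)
Your proof is correct and follows essentially the same route as the paper, which presents Theorem~\ref{thm:disjBNPC} as an immediate consequence of Theorem~\ref{thm:CTNPC} together with the three-copies-glued-at-the-root construction from Proposition~\ref{prop:noBranch}; you have simply written out the details (NP membership, the $s=t$ specialisation, and the cut/restriction argument at the identified vertex) that the paper leaves implicit. No gaps.
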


A vertex $v$ of a digraph $D=(V,A)$ is an {\bf
  in-generator} (resp. {\bf out-generator}) if $v$ can be reached from (resp. can
reach) every other vertex in $V$ by a directed path. Thus a vertex $v$
is an in-generator (resp. out-generator) if and only if $v$ is the root of
some in-branching $B_v^-$ (resp. out-branching $B_v^+$) of $D$.  The set of
in-generators (resp. out-generators) of a digraph is denoted by
$\In(D)$ (resp. $\Out(D)$).

If $X\subset V$ we denote by $\induce{D}{X}$ the subdigraph of $D$
{\bf induced} by $X$, that is, the digraph whose vertex set is $X$ and
whose arc set consists of those arcs from $A$ that have both end-vertices in $X$.

A digraph is {\bf semicomplete} if it has no pair of non-adjacent
vertices. A {\bf tournament} is a semicomplete digraph with no directed cycle of length 2. We need the following results on
semicomplete digraphs. For a survey on results for semicomplete
digraphs see Chapter 2 in \cite{bang2018}.  The following is an easy
consequence of the definition of strong connectivity.
\begin{lemma}\label{lem:Instrong}
  Let $D$ be semicomplete digraph. Then the induced subdigraphs $\induce{D}{In(D)}$   and
  $\induce{D}{Out(D)}$ are  strong.
\end{lemma}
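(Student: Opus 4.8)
The plan is to exploit the well-known structural fact that the strong components of a semicomplete digraph are linearly ordered. First I would recall that if $S_1,\dots,S_k$ denote the strong components of $D$, then the condensation of $D$ (obtained by contracting each $S_i$ to a single vertex) is itself semicomplete and acyclic, hence a transitive tournament. Consequently the components can be indexed so that $S_i \mapsto S_j$ whenever $i<j$; that is, between two distinct components every arc goes from the lower-indexed one to the higher-indexed one.

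Next I would identify the two vertex sets in the statement with the extreme components of this order, proving $\Out(D)=V(S_1)$ and $\In(D)=V(S_k)$. For the first inclusion: every vertex of $S_1$ reaches all of $V(S_1)$ (since $S_1$ is strong) and all of $V(S_2)\cup\cdots\cup V(S_k)$ (since $S_1\mapsto S_j$ for $j>1$), so $V(S_1)\subseteq\Out(D)$. Conversely, a vertex lying in some $S_i$ with $i\ge 2$ cannot reach any vertex of $S_1$, because every arc of $D$ incident with $V(S_1)$ leaves $V(S_1)$; hence such a vertex is not an out-generator. This yields $\Out(D)=V(S_1)$, and the symmetric argument — using that $S_k$ is the unique sink component and is reached by every vertex — gives $\In(D)=V(S_k)$.

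Finally, since $S_1$ and $S_k$ are by definition strong components of $D$, the induced subdigraphs $\induce{D}{\Out(D)}=\induce{D}{V(S_1)}$ and $\induce{D}{\In(D)}=\induce{D}{V(S_k)}$ are precisely these components and are therefore strong, completing the proof.

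The only point that genuinely requires the semicomplete hypothesis — and which I expect to be the crux — is the total-order structure on the components. In a general digraph the condensation may have several sources and sinks, so $\Out(D)$ and $\In(D)$ could be unions of distinct components or even empty, and the induced subdigraphs need not be strong (or connected). Semicompleteness forces a unique source and a unique sink in the condensation, and it is exactly this that makes each generator set coincide with a single strong component.
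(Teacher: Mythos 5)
Your proof is correct and is exactly the standard argument the paper has in mind: the authors state Lemma~\ref{lem:Instrong} without proof as ``an easy consequence of the definition of strong connectivity,'' and your identification of $\Out(D)$ and $\In(D)$ with the initial and terminal strong components of the linear component order of a semicomplete digraph supplies precisely the missing details. Your closing remark about why semicompleteness is essential is also accurate.
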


\begin{theorem}[Moon~\cite{moonCMB9}]
\label{thm:pancyclic}
Every strong semicomplete digraph \AY{on at least $3$ vertices} is pancyclic. 
In particular, every strong semicomplete digraph \AY{on at least $2$ vertices} has a hamiltonian cycle.
\end{theorem}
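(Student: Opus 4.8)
The plan is to prove pancyclicity by a length-by-length induction driven by a single \emph{extension lemma}: if $D$ is a strong semicomplete digraph on $n$ vertices and $D$ contains a directed cycle of length $\ell$ with $2 \le \ell < n$, then $D$ contains a directed cycle of length $\ell+1$. Granting this, it suffices to exhibit one short cycle to seed the induction. Since $D$ is strong it has a directed cycle; take a shortest one. If its length were at least $4$, say $u_1 u_2 \cdots u_p u_1$, then $u_1$ and $u_3$ would be adjacent by semicompleteness, and either $u_1\to u_3$ (giving the shorter cycle $u_1 u_3 u_4 \cdots u_p u_1$) or $u_3 \to u_1$ (giving the $3$-cycle $u_1 u_2 u_3 u_1$) would contradict minimality; hence the shortest cycle has length $2$ or $3$. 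Applying the extension lemma repeatedly from this seed produces directed cycles of every length from its length up to $n$, and in particular of every length in $\{3,4,\dots,n\}$, which is pancyclicity; the cycle of length $n$ is a hamiltonian cycle. For the ``in particular'' clause with $n=2$, a strong semicomplete digraph on two vertices is a digon and is its own hamiltonian cycle.

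It remains to prove the extension lemma, which is where the real work lies. Fix a cycle $C=v_1 v_2\cdots v_\ell v_1$ with $2\le\ell<n$ and set $W=V(D)\setminus V(C)\neq\emptyset$ (indices on $C$ read modulo $\ell$). The first case is when some $w\in W$ has both an in-neighbour and an out-neighbour on $C$. I would first argue that then there is in fact a consecutive pair with $v_i\to w$ and $w\to v_{i+1}$: indeed, if no such pair existed, then starting from any out-neighbour $w\to v_b$ one would deduce $v_{b-1}\not\to w$, hence $w\to v_{b-1}$ by semicompleteness, and inductively $w\to V(C)$ entirely, contradicting the assumed existence of an in-neighbour of $w$ on $C$. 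Inserting $w$ between $v_i$ and $v_{i+1}$ then yields a directed cycle of length $\ell+1$.

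The remaining case, which I expect to be the main obstacle, is when every $w\in W$ sends all of its arcs to $C$ in the same direction: let $S$ be the set of vertices $s$ with $s\to V(C)$ and $T$ the set of vertices $t$ with $V(C)\to t$, so that $W=S\cup T$. Strong connectivity forces both $S$ and $T$ to be nonempty (otherwise $V(C)$ could not be reached from $W$, or could not reach $W$), and, considering the cut $V(C)\cup T$ whose only possible outgoing arcs run from $T$ to $S$, it forces an arc $t\to s$ with $t\in T$ and $s\in S$. The key observation is then that $v_1 v_2\cdots v_{\ell-1}\,t\,s\,v_1$ is a directed cycle: its three critical arcs $v_{\ell-1}\to t$, $t\to s$, and $s\to v_1$ are present by the definition of $T$, the chosen arc, and the definition of $S$, respectively. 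This cycle drops the single vertex $v_\ell$ while adding the two vertices $t,s$, so it has length $(\ell-1)+2=\ell+1$, completing the lemma and hence the proof.
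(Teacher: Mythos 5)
Your proof is correct. Note first that the paper does not prove this statement at all: it is quoted as a known theorem of Moon with a citation, so there is no in-paper argument to compare against. Your write-up is the standard proof of Moon's theorem, adapted from tournaments to semicomplete digraphs, and the adaptation is done carefully where it matters. The seed step (a shortest cycle has length $2$ or $3$, via the chord $u_1u_3$ or $u_3u_1$) is fine even when both arcs are present. In the extension lemma, Case~1 correctly uses only the implication ``$v_{i}\to w$ absent $\Rightarrow$ $w\to v_{i}$ present'' so that possible $2$-cycles between $w$ and $C$ cause no trouble, and the induction around $C$ really does show that $w$ has no in-neighbour on $C$ (each step derives $v_{b-1}\not\to w$), which is the needed contradiction. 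In Case~2, the sets $S$ and $T$ are disjoint and nonempty, the cut $V(C)\cup T$ is proper, and the only arcs that can leave it go from $T$ to $S$, so the arc $ts$ exists and the cycle $v_1\cdots v_{\ell-1}\,t\,s\,v_1$ has length $\ell+1$ as claimed; the degenerate cases $\ell=2$ and $|W|=1$ are covered because $|W|=1$ forces Case~1. One could strengthen the conclusion to vertex-pancyclicity (Moon's original statement) by tracking a fixed vertex through the extension, but the paper only uses pancyclicity and the existence of a hamiltonian cycle, both of which you establish.
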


An {\bf independent set} in a digraph $D=(V,A)$ is a set $X\subseteq
V$ such that $\induce{D}{X}$ has no arcs. We denote by $\alpha{}(D)$
the maximum size of an independent set in $D$.

\begin{theorem}[Chen-Manalastras~\cite{chenDM44}]
\label{thm:CMthm}
Let $D$ be a strong digraph with $\alpha{}(D)=2$. Then either $D$ has
a directed hamiltonian cycle or its vertices can be covered by two
directed cycles $C_1,C_2$ such that these are either vertex disjoint
or they intersect in a subpath of both. In particular $D$ has a
directed hamiltonian path.
\end{theorem}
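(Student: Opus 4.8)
The plan is to fix an extremal pair of directed cycles and then to push any uncovered vertex inside by a short rerouting argument driven entirely by strong connectivity and the hypothesis $\alpha(D)=2$.

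First, if $D$ has a directed Hamiltonian cycle we are done and the ``in particular'' clause is immediate, so I assume from now on that $D$ has no Hamiltonian cycle. Since $D$ is strong it contains directed cycles, so I consider pairs $(C_1,C_2)$ of directed cycles and choose one for which (i) $|V(C_1)\cup V(C_2)|$ is as large as possible, subject to that (ii) $|V(C_1)\cap V(C_2)|$ is as small as possible, and subject to that (iii) $|V(C_1)|+|V(C_2)|$ is as small as possible. Allowing $C_1=C_2$ lets a single cycle compete, but since there is no Hamiltonian cycle a single cycle can never attain full coverage, so an extremal pair with full coverage will automatically be a genuine pair of two cycles. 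I claim such an extremal pair covers $V(D)$ and meets in a single common subpath, which is exactly the conclusion.

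Second, I would show that $V(C_1)\cap V(C_2)$ induces a single common subpath of both cycles. If the shared vertices split into two or more maximal pieces that are common to $C_1$ and $C_2$, then between two such synchronised pieces the two cycles run through disjoint bundles of private vertices; using strong connectivity together with $\alpha(D)=2$ (which forbids three pairwise non-adjacent vertices, hence forces adjacencies among the few endpoints of these bundles) I would splice the two cycles along one common piece, either merging them into a single longer cycle or producing a new pair with strictly smaller intersection. Either outcome contradicts the minimality in (ii), or yields a Hamiltonian cycle, which we have excluded. The bookkeeping of exactly which connecting arcs are guaranteed, and in which orientation, is fiddly but is dictated by the ``no independent triple'' condition.

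Third, and this is the main obstacle, I would prove full coverage. Suppose some vertex $v\notin V(C_1)\cup V(C_2)$; by strong connectivity I may take $v$ adjacent to $V(C_1)\cup V(C_2)$. Writing $C_1=u_1u_2\cdots u_m u_1$, the vertex $v$ cannot be \emph{insertable} into $C_1$, that is, there is no index $i$ with $u_i\to v$ and $v\to u_{i+1}$, for otherwise $C_1$ could be lengthened and coverage improved, violating (i); the same holds for $C_2$. On the other hand $\alpha(D)=2$ forces strong local adjacency: any two vertices of $C_1$ that are both non-adjacent to $v$ must be adjacent to each other, so the ``blind spots'' of $v$ on $C_1$ induce a semicomplete subdigraph (and semicomplete digraphs are traceable). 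Combining non-insertability with this adjacency pattern, I would find after a local rotation of one cycle either a place to insert $v$ or a way to absorb $v$ while merging the two cycles, in each case increasing coverage, a contradiction. Ruling out every non-insertable adjacency pattern of an uncovered vertex is where the real work lies. Finally, once $C_1,C_2$ cover $V(D)$ and are vertex-disjoint or meet in a common subpath, the Hamiltonian path follows by concatenation: in the disjoint case an arc from $C_1$ to $C_2$ (present by strong connectivity) joins a Hamiltonian path of $C_1$ to one of $C_2$; in the shared-subpath case, writing the common path as $P$ from $x$ to $y$ and the private paths as $Q_1,Q_2$, each from $y$ to $x$, concatenating the private part of $Q_1$, then $P$ from $x$ to $y$, then the private part of $Q_2$ visits every vertex exactly once. (Alternatively, the Hamiltonian path on its own is immediate from the Gallai--Milgram theorem, since $\alpha(D)=2$ gives a cover of $V(D)$ by at most two vertex-disjoint paths.)
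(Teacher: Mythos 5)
This theorem is not proved in the paper at all: it is the Chen--Manalastras theorem, imported verbatim with a citation to \cite{chenDM44}, so there is no in-paper proof to compare your argument against. Judged on its own, your proposal is a reasonable high-level plan but not a proof. The two statements that carry all the difficulty are exactly the ones you defer: that an extremal pair of cycles meets in a single common subpath (you write that the ``bookkeeping \dots is fiddly'') and that an extremal pair covers every vertex (you write that this ``is where the real work lies''). Neither step is routine; ruling out every non-insertable adjacency pattern of an uncovered vertex, and showing that two synchronised common pieces can always be spliced without losing coverage, is the substance of Chen and Manalastras's several-page argument, and your three-tier extremality choice is not obviously the right invariant to make those reductions terminate. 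As written, the proposal would not be accepted as a proof of the theorem.

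One concrete error worth flagging: the parenthetical claim that the Hamiltonian path ``is immediate from the Gallai--Milgram theorem'' is wrong. Gallai--Milgram gives a partition of $V(D)$ into at most $\alpha(D)=2$ vertex-disjoint paths, which is strictly weaker than a single Hamiltonian path; turning two disjoint covering paths into one traceable path is precisely the content one needs strong connectivity and the cycle structure for. By contrast, your derivation of the Hamiltonian path \emph{from} the stated cycle cover (concatenating around a connecting arc in the disjoint case, or routing through the shared subpath once in the intersecting case) is correct, and that is the only part of the statement the present paper actually uses.
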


By a {\bf clique} in a digraph we mean an induced subdigraph which is
semicomplete. The following is a well known consequence of a result in
Ramsey theory.

\begin{theorem}\label{thm:Ramsey}
Every digraph on at least $9$ vertices contains either an independent
set of size at least $3$ or a clique of size $4$.
\end{theorem}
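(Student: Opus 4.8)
The plan is to recognize this as the classical Ramsey statement $R(3,4)\le 9$ and to reduce it to a purely undirected problem. First I would pass to the \emph{adjacency graph} $G$ on $V(D)$, in which two vertices are joined by an edge precisely when $D$ has at least one arc between them. By definition an independent set of $D$ is exactly an independent set of $G$, and a clique of $D$ (an induced semicomplete subdigraph) is exactly a clique of $G$. The orientations of the arcs play no further role, so it suffices to show that every graph $G$ on $9$ vertices has an independent set of size $3$ or a clique of size $4$; if $D$ has more than $9$ vertices, I simply restrict attention to any $9$ of them, since extra vertices can only help.

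Assume for contradiction that $G$ has $9$ vertices, no independent set of size $3$, and no $K_4$. I would then analyze a single vertex $v$ together with its neighborhood and non-neighborhood, bounding $\deg_G(v)$ from both sides. For the lower bound, observe that any two non-neighbors of $v$ must themselves be adjacent, since otherwise they would form an independent set of size $3$ together with $v$; hence the non-neighbors of $v$ induce a clique, and as there is no $K_4$ this clique has at most $3$ vertices, giving $\deg_G(v)\ge 8-3=5$. For the upper bound, if $v$ had at least $6$ neighbors then, applying the fact $R(3,3)=6$ to those neighbors, they would contain either an independent set of size $3$ (excluded) or a triangle, which together with $v$ would form a $K_4$ (also excluded); hence $\deg_G(v)\le 5$.

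Combining the two bounds forces $\deg_G(v)=5$ for every vertex $v$, so $G$ is $5$-regular on $9$ vertices. This is impossible, because the degree sum $9\cdot 5=45$ is odd whereas every graph has an even degree sum; this parity contradiction completes the argument. The only external ingredient is $R(3,3)=6$, which I would either cite or prove in two lines: among any $6$ vertices some vertex has $3$ neighbors or $3$ non-neighbors, and in either case the corresponding triple yields a triangle or an independent triple. There is no serious obstacle here; the one point requiring care is that the observation ``non-neighbors form a clique,'' rather than a cruder vertex count, is what pins the degree down to exactly $5$ and thereby lets the parity argument deliver the sharp threshold $9$ instead of the weaker bound $10$.
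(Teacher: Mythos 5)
Your proof is correct. Note, however, that the paper does not actually prove this statement: it records it as ``a well known consequence of a result in Ramsey theory'' (namely $R(3,4)=9$) and gives no argument. What you have written is precisely the standard proof of $R(3,4)\le 9$, correctly adapted to the digraph setting: the reduction to the underlying undirected graph is legitimate because the paper's notions of independent set (no arcs) and clique (induced semicomplete subdigraph) depend only on which pairs of vertices are adjacent, not on arc orientations. The two degree bounds (non-neighbours of $v$ form a clique, hence at most $3$ of them; at least $6$ neighbours would yield a triangle or an independent triple via $R(3,3)=6$) and the final parity contradiction for a $5$-regular graph on $9$ vertices are all sound. So your proposal does not diverge from the paper's proof --- it supplies one where the paper offers only a citation --- and it buys self-containedness at the cost of about a paragraph; the one genuinely sharp idea you correctly identify is that the clique structure of the non-neighbourhood, rather than a crude count, is what forces exact $5$-regularity and hence the threshold $9$ rather than $10$.
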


A {\bf good pair} (in $D$), is a pair $(I,O)$ such that $I$ is an
in-branching of $D$, $O$ is an out-branching of $D$, and $I$ and $O$
are arc-disjoint.  A {\bf good $r$-pair} (in $D$), is a good pair
$(I,O)$ such that $r$ is the root of $I$.  A {\bf good $(r,q)$-pair}
(in $D$), is a good pair $(I,O)$ such that $r$ is the root of $I$ and
$q$ is the root of $O$.

A digraph is {\bf co-bipartite} if its underlying graph is the
complement of a bipartite graph. In other words, its vertex set can be
partitioned in two sets $V_1$, $V_2$ such that $D\langle V_1\rangle$
and $D\langle V_2\rangle$ are semicomplete digraphs.

\begin{proposition}
  \label{prop:strongnotenough}
  For every natural number $k$, there are infinitely many strong
  co-bipartite digraphs with minimum semidegree at least $k$ and no
  good pair.
\end{proposition}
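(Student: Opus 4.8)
The plan is to give, for each $m\ge k+1$, an explicit co-bipartite digraph $D_m$ on $4m$ vertices with no good pair, and to let $m$ range over all integers $\ge k+1$ to obtain infinitely many examples. I would build $D_m$ from four pairwise disjoint $m$-sets $T,T',S,S'$, each inducing a complete digraph (all arcs in both directions). Set $V_1=T\cup T'$ and $V_2=S\cup S'$. To make $V_1$ semicomplete while leaving only a single ``escape'' from $T$, I orient every arc between $T$ and $T'$ from $T'$ to $T$ except for one arc $x'\to y'$ with $x'\in T,\ y'\in T'$; symmetrically, between $S$ and $S'$ all arcs go from $S'$ to $S$ except one arc $x\to y$ with $x\in S,\ y\in S'$. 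Finally I add exactly two arcs between $V_1$ and $V_2$: a forward arc $a\to b$ with $a\in T',\ b\in S$, and a backward arc $b'\to a'$ with $b'\in S',\ a'\in T$ (taking $a,a',b,b'$ distinct from the internal arc endpoints). Then $D_m\langle V_1\rangle$ and $D_m\langle V_2\rangle$ are semicomplete, so $D_m$ is co-bipartite with $\alpha(D_m)=2$; $D_m$ is easily checked to be strong; and since the complete digraph on each $m$-set already gives every vertex in- and out-degree at least $m-1$, we get $\delta^0(D_m)\ge m-1\ge k$. By construction $d^+(V_1)=d^-(V_1)=1$, so $\lambda(D_m)=1$, consistent with the main theorem, which rules out good pairs in this regime only when the arc-connectivity is $1$.

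To prove $D_m$ has no good pair I would invoke the standard necessary condition: if $(I,O)$ is a good pair with $O$ rooted at $q$ and $I$ rooted at $r$, then for every proper nonempty $X$, $O$ uses an arc leaving $X$ when $q\in X$ and an arc entering $X$ when $q\notin X$, while $I$ uses an arc entering $X$ when $r\in X$ and an arc leaving $X$ when $r\notin X$. Applied to $X=V_1$, this immediately kills the two ``opposite-side'' placements of the roots: if $q,r$ are separated by the cut $(V_1,V_2)$ then $O$ and $I$ are both forced through the unique forward arc $a\to b$ (or both through the unique backward arc $b'\to a'$), contradicting arc-disjointness.

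It then remains to treat $q,r\in V_1$ and $q,r\in V_2$. For $q,r\in V_1$, since $a\to b$ is the only arc into $V_2$ and $b'\to a'$ the only arc out of $V_2$, the digraph $V_2$ has a unique entry vertex and a unique exit vertex in any spanning branching; hence $O\langle V_2\rangle$ is an out-branching of $V_2$ rooted at $b$ and $I\langle V_2\rangle$ is an in-branching of $V_2$ rooted at $b'$. But $b\in S$, $b'\in S'$, and $x\to y$ is the only arc from $S$ to $S'$, so both of these branchings are forced to use $x\to y$ --- a contradiction. The case $q,r\in V_2$ is symmetric: $O\langle V_1\rangle$ is forced to be an out-branching rooted at $a'\in T$ and $I\langle V_1\rangle$ an in-branching rooted at $a\in T'$, and both must use the unique arc $x'\to y'$ from $T$ to $T'$.

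The main obstacle is exactly these two same-side cases. A naive construction using two complete cliques joined by one arc in each direction does have a good pair whenever both roots lie in the same clique, because a complete digraph on at least $3$ vertices admits arc-disjoint out- and in-branchings from any prescribed pair of roots; this is why the top-level cut alone cannot suffice. The internal bottlenecks $x\to y$ and $x'\to y'$ are introduced precisely to defeat this: once the unique cross arcs force the entry/exit vertices of each side, the induced sub-branchings are pinned to opposite sides of an internal one-arc cut and collide. The one point needing care in the write-up is the justification that $O\langle V_2\rangle$ and $I\langle V_2\rangle$ really are a single out-branching and a single in-branching of $V_2$ with the claimed roots, which follows from the cut $(V_1,V_2)$ being crossed by exactly one arc in each direction.
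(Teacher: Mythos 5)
Your construction is correct and rests on the same mechanism as the paper's proof: two semicomplete halves, each assembled from two complete pieces with all cross arcs oriented one way except a single exceptional arc that is forced into both branchings once the cut arcs pin the entry and exit points of the half not containing the roots. The paper glues its two halves through a distinguished vertex $v$ via a directed $2$-cycle rather than via one arc in each direction, but this difference is cosmetic.
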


\begin{proof}
  Let $T_1,T_2$ be strongly connected tournaments with
  $\delta^0{}(T_i)\geq k$ for $i=1,2$ and let $T$ be obtained from
  these by adding a new vertex $v$ and all possible arcs from $v$ to
  $V(T_1)$, all possible arcs from $V(T_2)$ to $v$ and all possible
  arcs from $V(T_2)$ to $V(T_1)$, except one arc $xy$ with goes from
  $V(T_1)$ to $V(T_2)$. The result $H$ is clearly strong and does not
  have an out-branching and an in-branching, both rooted at $v$, which
  are arc-disjoint (the arc $xy$ must belong to both branchings). Now
  let $H$ be the digraph that we obtain from two copies $H',H''$ of
  $H$ by adding a 2-cycle between the two copies $v',v''$ of $v$. This
  digraph is clearly co-bipartite. Suppose $H$ has a good pair. Then,
  w.l.o.g., the root of the out-branching belongs to $H'$ and then also
  the root of the in-branching must belong to $H'$ (as the arcs
  $v'v''$ and $v''v'$ are the only arcs between the two copies of
  $H$). But that means that $H''$ has an an in-branching rooted at
  $v''$ which is arc-disjoint from an out-branching rooted at $v''$,
  contradiction.
  \end{proof}

\section{Good pairs in semicomplete digraphs}

We first consider semicomplete digraphs and derive some easy results
that will be used later.

\begin{lemma}\label{lem:non-strong}
Let $D$ be a non-strong semicomplete digraph of order at least $4$ and
let $r \in \In(D)$ and $q \in \Out(D)$ be arbitrary.  Then $D$ has a
good $(r,q)$-pair, $(I,O)$, in $D$.
\end{lemma}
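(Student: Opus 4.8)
The plan is to exploit the acyclic ordering of the strong components of the non-strong semicomplete digraph $D$. Write the strong components as $D_1\mapsto D_2\mapsto\cdots\mapsto D_p$ with $p\ge 2$, so that for $i<j$ every vertex of $D_i$ dominates every vertex of $D_j$ (complete forward domination, which holds because $D$ is semicomplete). A vertex can reach every other vertex exactly when it lies in $D_1$, and is reachable from every other vertex exactly when it lies in $D_p$; hence $\Out(D)=V(D_1)$ and $\In(D)=V(D_p)$, so the prescribed roots satisfy $q\in V(D_1)$ and $r\in V(D_p)$. By Lemma~\ref{lem:Instrong} both end components are strong, so by Moon's theorem (Theorem~\ref{thm:pancyclic}) each of them, if it has at least two vertices, is Hamiltonian and in particular admits a Hamiltonian path starting (respectively ending) at any prescribed vertex.

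The skeleton of the construction is as follows. For the out-branching $O$ I take an out-branching $O_1$ of $\induce{D}{V(D_1)}$ rooted at $q$ (using only arcs inside $D_1$) and then attach every vertex $v\notin V(D_1)$ by a single arc coming from $V(D_1)$; such an arc exists by forward domination, and the result is a valid out-branching rooted at $q$. Symmetrically, for the in-branching $I$ I take an in-branching $I_p$ of $\induce{D}{V(D_p)}$ rooted at $r$ and attach every vertex $v\notin V(D_p)$ by a single arc going into $V(D_p)$. A short check shows that the only arcs that could possibly be shared by $O$ and $I$ are arcs with tail in $V(D_1)$ and head in $V(D_p)$: the internal arcs of $O_1$ stay inside $D_1$, those of $I_p$ stay inside $D_p$, every other attaching arc of $O$ has its tail in $V(D_1)$ and every other attaching arc of $I$ has its head in $V(D_p)$, so no pair can coincide unless it is a $D_1$-to-$D_p$ arc. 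Thus the problem reduces to choosing, for each $w\in V(D_p)$, the vertex of $V(D_1)$ that attaches it in $O$, and for each $u\in V(D_1)$, the vertex of $V(D_p)$ that attaches it in $I$, so that no pair $(u,w)$ has $u$ attaching $w$ in $O$ and simultaneously $w$ attaching $u$ in $I$.

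When $|V(D_1)|\ge 2$ and $|V(D_p)|\ge 2$ this is easy, because the arcs from $V(D_1)$ to $V(D_p)$ form a complete bipartite pattern: I fix two vertices in each end component, route almost all attachments through one fixed vertex on each side, and deflect the single offending attachment to the second fixed vertex, which removes the unique potential coincidence. The delicate case, which I expect to be the main obstacle, is when an end component is a single vertex, say $V(D_1)=\{q\}$. Then every vertex outside $D_1$ must be attached in $O$ from $q$, and $q$ must leave by one of its own out-arcs in $I$, so the naive scheme is forced to reuse a crossing arc. The resolution is to use the slack provided by the high out-degree of $q$ together with Moon's theorem: the semicomplete digraph $\induce{D}{V\setminus\{q\}}$ has a Hamiltonian path $t_1\to t_2\to\cdots\to t_{n-1}=r$ ending at $r$ (concatenate, via forward domination, Hamiltonian paths of the components $D_2,\dots,D_p$, choosing the last to end at $r$). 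Take $I$ to be the Hamiltonian path $q\to t_1\to\cdots\to r$, an in-branching rooted at $r$ using only the crossing arc $qt_1$. For $O$ use the star consisting of $q\to v$ for every $v\neq t_1$, together with one internal arc into $t_1$ from an in-neighbour of $t_1$ in $V\setminus\{q\}$; this is an out-branching rooted at $q$ that avoids $qt_1$ and the path arcs, hence is arc-disjoint from $I$. The symmetric argument, applied in the reverse digraph, handles $|V(D_p)|=1$.

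The last point needing care is when $t_1$ has no in-neighbour inside $V\setminus\{q\}$, i.e. when $D_2$ is also a single vertex; here one reroutes $q$'s attachment in $I$ to a vertex beyond $t_1$, which exists since $n\ge 4$, and attaches $t_1$ in $O$ directly from $q$. Collecting these cases, and checking the few digraphs of order exactly $4$ with singleton end components directly, yields arc-disjoint $I$ and $O$ rooted at $r$ and $q$ in all cases, which proves the lemma. The difficulty is concentrated entirely in the singleton end-component cases, where the counting slack is tightest, and the hypothesis that $D$ has order at least $4$ is precisely what guarantees enough room there.
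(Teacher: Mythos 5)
Your overall strategy is sound and close in spirit to the paper's, but two of your explicit constructions fail as written. First, in the case $|V(D_1)|,|V(D_p)|\ge 2$, your ``single deflection'' does not remove the conflict. If every $w\in V(D_p)$ is attached in $O$ by $u_1w$ and every $u\in V(D_1)$ is attached in $I$ by $uw_1$, the unique coincidence is $u_1w_1$; but rerouting the $O$-attachment of $w_1$ to $u_2w_1$ creates a new coincidence, since $u_2w_1$ is precisely the arc attaching $u_2$ in $I$. (Symmetrically, rerouting $u_1$'s $I$-attachment to $u_1w_2$ collides with the arc attaching $w_2$ in $O$.) You need to deflect on \emph{both} sides, e.g.\ set the $O$-attachment of $w_1$ to $u_2w_1$ \emph{and} the $I$-attachment of $u_2$ to $u_2w_2$; one checks that then the two crossing families are disjoint. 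Second, in the sub-case $V(D_1)=\{q\}$ and $V(D_2)=\{t_1\}$, your repair is broken: after rerouting $q$'s attachment in $I$ to $qt_j$ with $j\ge 2$ and attaching $t_1$ in $O$ by $qt_1$, the out-branching $O$ is the full out-star at $q$ and therefore contains $qt_j$, which now lies in both branchings. The fix requires additionally rerouting the $O$-attachment of $t_j$ through an in-neighbour of $t_j$ other than $q$ and $t_{j-1}$ (choosing $j$ so that such a neighbour exists, e.g.\ $j=3$ works using the arc $t_1t_3$, which is available because $t_1\mapsto t_3$); this is not automatic and needs to be argued, and it is exactly the tight spot where the hypothesis $n\ge 4$ is consumed. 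Also note that you never explicitly dispose of the case where both end components are singletons; your $V(D_1)=\{q\}$ construction does cover it, but you should say so, since ``apply the reversal'' would otherwise leave that case to neither argument.

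For comparison, the paper avoids both pitfalls by an asymmetric construction that only needs \emph{one} nontrivial end component: when $|\Out(D)|\ge 2$ it takes a Hamiltonian cycle $u_1\dots u_au_1$ of $\induce{D}{\Out(D)}$ with $u_1=q$, puts the path $u_1\dots u_a$ plus the out-star from $u_a$ into $O$, and puts the single leftover arc $u_au_1$ plus the arcs $u_ir$ ($i<a$) plus an in-branching of $D-\Out(D)$ into $I$; disjointness is then immediate because the two branchings use arcs out of $\Out(D)$ with different tails. The doubly-singleton case is handled by one explicit four-line formula exploiting an arc $uv$ inside $D-\{q,r\}$. Your decomposition-by-end-components route can certainly be completed, but as submitted the deflection step and the $D_2$-singleton step are both incorrect and need the repairs indicated above.
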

\begin{proof}
Set $W_1=D\langle \Out(D)\rangle$ and $W_2=D\langle \In(D)\rangle$.

First consider the case when $|\Out(D)| \geq 2$, in which case we can
let $U=(u_1, u_2, \ldots, u_a, u_1)$ be a hamiltonian cycle in $W_1$
($a=|\Out(D)|$) such that $u_1 = q$, which exists by
Theorem~\ref{thm:pancyclic}. Let $I'$ be any in-branching in $D-W_1$
with root $r$, which exists as $\In(D) = \In(D- W_1)$. We now
construct $I$ from $I'$ by adding the arc $u_a u_1$ and every arc from
$\{u_1,u_2,\ldots,u_{a-1}\}$ to $r$. We construct $O$ by taking the
path $u_1 u_2 \ldots u_a$ and adding every arc from $u_a$ to
$V(D)\setminus \Out(D)$.  This gives us the desired good $(r,q)$-pair,
$(I,O)$, in $D$.

We may therefore assume that $|\Out(D)| = 1$ and analogously that
$|\In(D)| = 1$.  This implies that $\Out(D)=\{q\}$ and $\In(D)=\{r\}$.
As the order of $D$ is at least $4$, there exists an arc $uv$ in
$D-\{q,r\}$.  The following out-branching, $O$, and in-branching, $I$,
form the desired good $(r,q)$-pair, $(I,O)$, in $D$.

\[
A(I) = \{qv, vr,ur\} \cup \{ z r \; | \; z \in V(D)\setminus \{r,q,u,v\} \}
\]

\[
A(O) = \{qu, uv, qr\} \cup \{ q z \; | \; z \in V(D)\setminus \{r,q,u,v\} \}
\]

\end{proof}

\begin{lemma}\label{lem:util}
Let $D$ be a semicomplete digraph and $r$ be a vertex in $\In(D)$.
If there is a subdigraph $D'$ of $D$ of order at least $2$ having a
good $r$-pair, $(I',O')$, then $D$ has a good $r$-pair, $(I,O)$.
\end{lemma}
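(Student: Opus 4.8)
The plan is to build $(I,O)$ from $(I',O')$ by adding the vertices of $V(D)\setminus V(D')$ one at a time, maintaining at every stage a good $r$-pair on the current vertex set. Concretely, I would prove by induction the following statement: whenever $V(D')\subseteq S\subsetneq V(D)$ and $\induce{D}{S}$ has a good $r$-pair $(I_S,O_S)$, with $O_S$ rooted at some vertex $q\in S$, one can choose a vertex $w\in V(D)\setminus S$ and extend to a good $r$-pair on $S\cup\{w\}$. Starting from $S=V(D')$ with $(I_S,O_S)=(I',O')$ and iterating until $S=V(D)$ then yields the desired good $r$-pair $(I,O)$ in $D$; the hypothesis $|V(D')|\ge 2$ guarantees $|S|\ge 2$ throughout, which is used below.

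To select $w$, I would exploit the assumption $r\in\In(D)$: every vertex of $D$ reaches $r\in S$, so as long as $V(D)\setminus S$ is nonempty there is always a vertex of $V(D)\setminus S$ with an out-neighbour in $S$ (take the last vertex outside $S$ on a directed path from any remaining vertex to $r$). Fix such a $w$ and an arc $w\to x$ with $x\in S$. Adding this arc to $I_S$ attaches $w$ to the in-branching as a leaf that points towards $r$, and it cannot already lie in $I_S$ or $O_S$ because its tail $w$ lies outside $S$, whereas $I_S$ and $O_S$ only use arcs within $S$.

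The crux — the step I expect to be the main obstacle — is attaching this same $w$ to the out-branching by an arc disjoint from $w\to x$, for which I would use that $D$ is semicomplete, so $w$ is adjacent to the current root $q$. I would split into cases. If $w$ has any in-neighbour $v\in S$ (in particular if $q\to w$), I add $v\to w$ to $O_S$, leaving $w$ a leaf and $q$ the root. Otherwise $\{w\}\mapsto S$, and in particular $w\to q$; here I re-root the out-branching at $w$ by adding $w\to q$, which lets $w$ reach all of $S$ through the old tree, and I choose the in-branching arc $w\to x$ with $x\ne q$, which is possible since $\{w\}\mapsto S$ and $|S|\ge 2$. In every case the two newly added arcs are distinct and both incident with $w\notin S$, so they are disjoint from one another and from $I_S\cup O_S$; neither branching gains a cycle (the $I$-arc points from $w$ into $S$, and $w$ becomes either a leaf or the new root of $O$). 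A short bookkeeping check — comparing an $I$-arc added at one stage with an $O$-arc added at another, using that each arc is incident with the vertex introduced at its stage — confirms that arcs added at different stages never coincide, so arc-disjointness is preserved globally and the induction goes through.
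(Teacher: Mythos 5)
Your proposal is correct and follows essentially the same route as the paper: pick a vertex outside the current set with an out-neighbour inside (guaranteed by $r\in\In(D)$), attach it to the in-branching by an out-going arc, and attach it to the out-branching either as a leaf via an in-coming arc or, when it dominates the whole current set, by re-rooting the out-branching at it using $|S|\ge 2$ to keep the two new arcs distinct. The only (immaterial) difference is how the two cases are split; both versions handle the overlap correctly.
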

\begin{proof}
Let $(I',O')$ be a good $r$-pair of $D'$.  Assume that $V(D')\neq
V(D)$.  Since $r$ is an in-generator in $D$, there is a vertex $y\in
V(D)\setminus V(D')$ that dominates a vertex in $V(D')$.  If $y\ra
V(D')$, then let $r'$ be the root of $O'$ and $v\in V(D')\setminus
r'$. Set $I=I' + yv$ and $O=O' + yr'$.  Then $(I,O)$ is a good
$r$-pair of $D\langle V(D')\cup\{y\}\rangle$.  If $y\not\ra V(D')$,
then $y$ dominates a vertex $z_1$ in $V(D')$ and is dominates by a
vertex $z_2$ in $V(D')$.  Set $I=I' + yz_1$ and $O=O' + z_2y$. Then
$(I,O)$ is a good $r$-pair of $D\langle V(D')\cup\{y\}\rangle$.
 
We can apply this process iteratively until we obtain a good $r$-pair of $D$.
\end{proof}

A {\bf $4$-exception} is a pair $(D,a)$ such that $D$ has 4 vertices
and contains the strong tournament of order $4$ depicted
Figure~\ref{fig:ST4} (with plain arcs) and possibly one or both arcs
in $\{dc, cb\}$ (shown as dotted arcs).
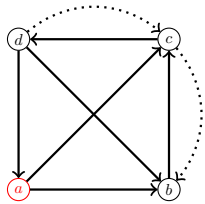
\begin{figure}[H]
\begin{center}
\tikzstyle{vertexB}=[circle,draw, minimum size=14pt, scale=0.6, inner sep=0.5pt]
\tikzstyle{vertexR}=[circle,draw, color=red!100, minimum size=14pt, scale=0.6, inner sep=0.5pt]

\begin{tikzpicture}[scale=1]
 \node (a) at (0,0) [vertexR] {$a$};
  \node (b) at (2,0) [vertexB] {$b$};
  \node (c) at (2,2) [vertexB] {$c$};
 \node (d) at (0,2) [vertexB] {$d$};
  \draw [->, line width=0.03cm] (a) to (b);
\draw [->, line width=0.03cm] (b) to (c);
\draw [->, line width=0.03cm] (c) to (d);
\draw [->, line width=0.03cm] (d) to (a);
\draw [->, line width=0.03cm] (a) to (c);
\draw [->, line width=0.03cm] (d) to (b);
\draw [->, line width=0.03cm, dotted] (d) to [in=140, out =40]  (c);
\draw [->, line width=0.03cm, dotted] (c) to [in=50, out =-50]  (b);
\end{tikzpicture}
\caption{The $4$-exceptions $(D,a)$.}\label{fig:ST4}
\end{center}
\end{figure}

\begin{proposition}
Let $D$ be a semicomplete digraph of order $4$ and let $r$ be a vertex
of $\In(D)$.  Then $D$ has a good $r$-pair unless $(D,r)$ is a
$4$-exception.
\end{proposition}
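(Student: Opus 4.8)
The plan is to settle two easy configurations with the lemmas already proved and then reduce everything to a single delicate situation. First, if $D$ is not strong, then since $D$ is semicomplete of order $4$ it has a hamiltonian path, so $\Out(D)\neq\emptyset$, and Lemma~\ref{lem:non-strong} produces a good $(r,q)$-pair for any $q\in\Out(D)$; in particular a good $r$-pair exists and $(D,r)$ is not an exception. So I may assume $D$ is strong. Second, if $r$ lies in a digon, say both arcs $rw$ and $wr$ are present, then $\induce{D}{\{r,w\}}$ has the good $r$-pair $(\{wr\},\{rw\})$, and Lemma~\ref{lem:util} lifts it to a good $r$-pair of $D$. Hence I may also assume $r$ lies in no digon, so the three other vertices split as $R^+\cup R^-$ with $R^+=\{w:r\ra w\}$ and $R^-=\{w:w\ra r\}$, both non-empty because $D$ is strong; thus $|R^-|\in\{1,2\}$. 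Throughout I write $T=\induce{D}{V(D)\setminus\{r\}}$. Note that the root $a$ of a $4$-exception lies in no digon, so exceptions can only surface in this last case.

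Next I would handle $|R^-|=1$, which is where the exceptions live. Write $R^-=\{z\}$ and $R^+=\{x,y\}$. Since $zr$ is the only arc of $D$ entering $r$, every in-branching $I$ rooted at $r$ must contain $zr$; consequently any out-branching $O$ arc-disjoint from $I$ cannot enter $r$ and so is \emph{forced} to be rooted at $r$. Because $r\in\In(D)$, both $x$ and $y$ reach $z$ inside $T$, so $z\in\In(T)$ and $1\le d^-_T(z)\le 2$. If $d^-_T(z)=2$, I will build a good $r$-pair explicitly: take a hamiltonian path of $T$ ending at $z$ (it exists, being $x\ra y\ra z$ or $y\ra x\ra z$ together with one of $xz,yz$), extend it by $zr$ to obtain $I$, and let $O$ consist of $rx$, $ry$ and the second, unused arc entering $z$; these are arc-disjoint by construction. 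If instead $d^-_T(z)=1$, the unique arc into $z$ inside $T$ must be used by $I$, so $O$ cannot reach $z$ and no good $r$-pair exists. I then check that $d^-_T(z)=1$, together with $z\in\In(T)$ and strong connectivity, forces $T$ to be the triangle $x\ra y\ra z\ra x$ possibly augmented by the digons on $\{x,y\}$ and/or $\{y,z\}$ (but not on $\{x,z\}$, which would give $d^-_T(z)=2$); identifying $x,y,z$ with $b,c,d$, this is exactly a $4$-exception with $r$ in the role of $a$.

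Then I would treat $|R^-|=2$, say $R^-=\{y,z\}$ and $R^+=\{x\}$, and show a good $r$-pair always exists. Here $r$ has two entering arcs $yr,zr$, so $I$ and $O$ may each use one of them and $O$ need not be rooted at $r$; this extra freedom is what rules out exceptions. Since $x$ is not a sink, it dominates $y$ or $z$, say $x\ra y$. If $x\ra z$ as well, I split on the arc between $y$ and $z$ and exhibit a good pair such as $I=\{xy,yr,zr\}$, $O=\{rx,xz,zy\}$ when $z\ra y$, or $I=\{xz,zr,yr\}$, $O=\{rx,xy,yz\}$ when $y\ra z$. If instead $z\ra x$, strong connectivity forces $y\ra z$ (otherwise $z$ has no in-arc), so $T$ is the triangle $x\ra y\ra z\ra x$; using all six arcs I take $I=\{xy,yr,zx\}$ (the path $z\ra x\ra y\ra r$) and $O=\{yz,zr,rx\}$ (the path $y\ra z\ra r\ra x$), which are arc-disjoint. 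Adding any digons only enlarges the available arc set and preserves these pairs.

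I expect the main obstacle to be the bookkeeping in the ``$r$ in no digon'' cases: establishing that once $|R^-|=1$ the out-branching is \emph{forced} to be rooted at $r$, and then pinning down exactly which orientations of $T$ prevent $O$ from reaching $z$. Getting the boundary right---in particular that a digon on $\{x,z\}$ (equivalently $d^-_T(z)=2$) destroys the obstruction while digons on $\{x,y\}$ or $\{y,z\}$ do not---is the delicate point, and it is precisely this distinction that singles out the two dotted arcs permitted in the definition of a $4$-exception. Everything else reduces to the two short, finite constructions above.
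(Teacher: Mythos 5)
Your proof is correct, but it takes a genuinely different route from the paper's. The paper picks a spanning tournament $T$ of $D$ with $r\in\In(T)$, enumerates the four tournaments of order $4$, and exhibits explicit good $r$-pairs by figures, leaving only the case $T=ST_4$ with $r=a$ for a final check of which added arcs help. You instead dispatch the non-strong case via Lemma~\ref{lem:non-strong} and the case where $r$ lies in a $2$-cycle via Lemma~\ref{lem:util}, and then split on $d^-(r)\in\{1,2\}$. This localises the obstruction structurally: when $d^-(r)=1$ the in-branching is forced to contain $zr$, hence the out-branching is forced to be rooted at $r$, and a good pair fails exactly when the unique in-neighbour $z$ of $r$ also has in-degree $1$ in $D-r$ --- which you correctly verify is precisely the family of $4$-exceptions, digons $yx$ and $zy$ allowed, digon $xz$ excluded. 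Your approach buys an explanation of \emph{why} the dotted arcs $dc,cb$ are harmless while $ba,ad,ca,bd$ are not, at the price of a little more prose verification than the paper's picture-checking; in particular the step ``the unique arc of $T$ into $z$ must be used by $I$'' deserves its one-line justification (with $xz\notin A$, the only possible out-arc of $x$ in $I$ is $xy$, so $y$'s out-arc in $I$ cannot be $yx$ and must be $yz$). The impossibility argument in the exceptional case then coincides with the paper's: deleting the two forced in-branching arcs leaves two sources, so no arc-disjoint out-branching exists. All of your explicit constructions check out.
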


\begin{proof}
One easily sees that $D$ contains a spanning tournament $T$ such that
$r\in \In(T)$.  There are only four tournaments of order $4$, $ST_4$
the unique strong tournament of order $4$ and the three non-strong
tournaments depicted in Figure~\ref{fig:non-strong4}. For each of
these tournaments, by symmetry, we may assume that $r$ is the red
vertex and a good $r$-pair is given in Figure~\ref{fig:non-strong4}.
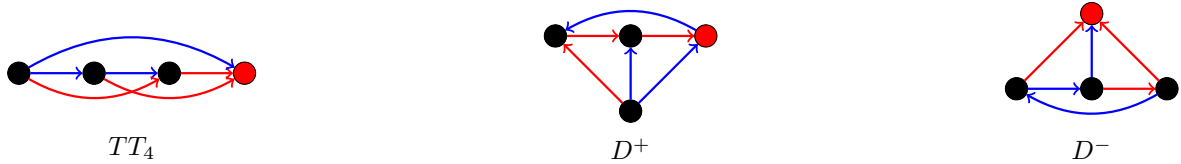
\begin{figure}[H]
\begin{center}
\tikzstyle{vertexB}=[circle,draw, top color=black!100, bottom color=black!100, minimum size=14pt, scale=0.6, inner sep=0.5pt]
\tikzstyle{vertexR}=[circle,draw, top color=red!100, bottom color=red!100, minimum size=14pt, scale=0.6, inner sep=0.5pt]
~~~
\begin{tikzpicture}[scale=1]
 \node (a1) at (0,0) [vertexB] {};
  \node (a2) at (1,0) [vertexB] {};
  \node (a3) at (2,0) [vertexB] {};
 \node (a4) at (3,0) [vertexR] {};
  \draw [->, line width=0.03cm, color=blue] (a1) to (a2);
\draw [->, line width=0.03cm, color=blue] (a2) to (a3);
\draw [->, line width=0.03cm, color=red] (a3) to (a4);
\draw [->, line width=0.03cm, color=red] (a1) to [in=-150, out=-30] (a3);
\draw [->, line width=0.03cm, color=red] (a2) to [in=-150, out=-30] (a4);
\draw [->, line width=0.03cm, color=blue] (a1) to [in=150, out=30] (a4);
\node at (1.5,-1) {$TT_4$};
\end{tikzpicture}
\hfill
\begin{tikzpicture}[scale=1]
 \node (a1) at (0,0) [vertexB] {};
  \node (a2) at (1,0) [vertexB] {};
  \node (a3) at (2,0) [vertexR] {};
 \node (b) at (1,-1) [vertexB] {};
  \draw [->, line width=0.03cm, color=red] (a1) to (a2);
\draw [->, line width=0.03cm, color=red] (a2) to (a3);
\draw [->, line width=0.03cm, color=blue] (a3) to [out=150, in=30] (a1);
\draw [->, line width=0.03cm, color=red] (b) to (a1);
\draw [->, line width=0.03cm, color=blue] (b) to (a2);
\draw [->, line width=0.03cm, color=blue] (b) to (a3);
\node at (1,-1.5) {$D^+$};
\end{tikzpicture}
\hfill
\begin{tikzpicture}[scale=1]
 \node (a1) at (0,0) [vertexB] {};
  \node (a2) at (1,0) [vertexB] {};
  \node (a3) at (2,0) [vertexB] {};
 \node (b) at (1,1) [vertexR] {};
  \draw [->, line width=0.03cm, color=blue] (a1) to (a2);
\draw [->, line width=0.03cm, color=red] (a2) to (a3);
\draw [->, line width=0.03cm, color=blue] (a3) to [out=-150, in=-30] (a1);
\draw [->, line width=0.03cm, color=red] (a1) to (b);
\draw [->, line width=0.03cm, color=blue] (a2) to (b);
\draw [->, line width=0.03cm, color=red] (a3) to (b);
\node at (1,-0.8) {$D^-$};
\end{tikzpicture}
~~~
\caption{The non-strong tournaments of order $4$ and a good $r$-pair
  when $r$ is the red vertex. The arcs of the in-branching are in red
  and the arcs of the out-branching in blue.}\label{fig:non-strong4}
 \end{center}
\end{figure}

Henceforth, we may assume that $T$ is $ST_4$.  If $r\in \{b,c,d\}$,
then there is a good $r$-pair as shown in Figure~\ref{fig:ST4good}.

\begin{figure}[H]
\begin{center}
\tikzstyle{vertexB}=[circle,draw, minimum size=14pt, scale=0.6, inner sep=0.5pt]
\tikzstyle{vertexR}=[circle,draw, color=red!100, minimum size=14pt, scale=0.6, inner sep=0.5pt]

~~~
\begin{tikzpicture}[scale=1]
 \node (a) at (0,0) [vertexB] {$a$};
  \node (b) at (2,0) [vertexR] {$b$};
  \node (c) at (2,2) [vertexB] {$c$};
 \node (d) at (0,2) [vertexB] {$d$};
  \draw [->, line width=0.03cm, color=blue] (a) to (b);
\draw [->, line width=0.03cm, color=blue] (b) to (c);
\draw [->, line width=0.03cm, color=red] (c) to (d);
\draw [->, line width=0.03cm, color=blue] (d) to (a);
\draw [->, line width=0.03cm, color=red] (a) to (c);
\draw [->, line width=0.03cm, color=red] (d) to (b);
\end{tikzpicture}
\hfill
\begin{tikzpicture}[scale=1]
 \node (a) at (0,0) [vertexB] {$a$};
  \node (b) at (2,0) [vertexB] {$b$};
  \node (c) at (2,2) [vertexR] {$c$};
 \node (d) at (0,2) [vertexB] {$d$};
  \draw [->, line width=0.03cm, color=blue] (a) to (b);
\draw [->, line width=0.03cm, color=red] (b) to (c);
\draw [->, line width=0.03cm, color=blue] (c) to (d);
\draw [->, line width=0.03cm, color=blue] (d) to (a);
\draw [->, line width=0.03cm, color=red] (a) to (c);
\draw [->, line width=0.03cm, color=red] (d) to (b);
\end{tikzpicture}
\hfill
\begin{tikzpicture}[scale=1]
 \node (a) at (0,0) [vertexB] {$a$};
  \node (b) at (2,0) [vertexB] {$b$};
  \node (c) at (2,2) [vertexB] {$c$};
 \node (d) at (0,2) [vertexR] {$d$};
  \draw [->, line width=0.03cm, color=red] (a) to (b);
\draw [->, line width=0.03cm, color=red] (b) to (c);
\draw [->, line width=0.03cm, color=red] (c) to (d);
\draw [->, line width=0.03cm, color=blue] (d) to (a);
\draw [->, line width=0.03cm, color=blue] (a) to (c);
\draw [->, line width=0.03cm, color=blue] (d) to (b);
\end{tikzpicture}
~~~
\caption{Good $r$-pairs in $ST_4$ for $r\in\{b,c,d\}$. The arcs of the
  in-branching are in red and the arcs of the out-branching in
  blue.}\label{fig:ST4good}
\end{center}
\end{figure}
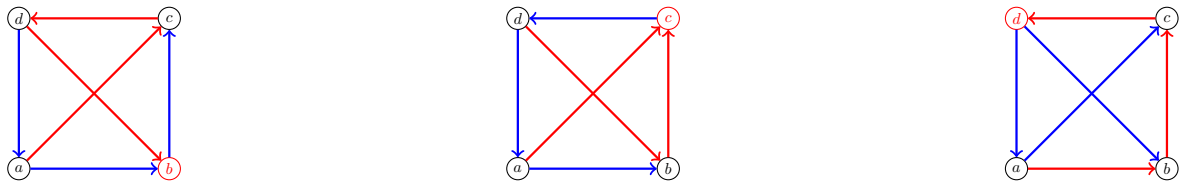

Henceforth we may assume that $r=a$.  If $D$ contains one of the arcs
$ba$, $ad$, $ca$, $bd$, then there is a good $r$-pair as shown in
Figure~\ref{fig:ST4+}.
\begin{figure}[H]
\begin{center}
\tikzstyle{vertexB}=[circle,draw, minimum size=14pt, scale=0.6, inner sep=0.5pt]
\tikzstyle{vertexR}=[circle,draw, color=red!100, minimum size=14pt, scale=0.6, inner sep=0.5pt]

~~~
\begin{tikzpicture}[scale=1]
 \node (a) at (0,0) [vertexR] {$a$};
  \node (b) at (2,0) [vertexB] {$b$};
  \node (c) at (2,2) [vertexB] {$c$};
 \node (d) at (0,2) [vertexB] {$d$};
  \draw [->, line width=0.03cm, color=blue] (a) to (b);
\draw [->, line width=0.03cm, color=blue] (b) to (c);
\draw [->, line width=0.03cm, color=red] (c) to (d);
\draw [->, line width=0.03cm, color=blue] (d) to (a);
\draw [->, line width=0.03cm] (a) to (c);
\draw [->, line width=0.03cm, color=red] (d) to (b);
\draw [->, line width=0.03cm, color=red] (b) to [in=-25, out=-155] (a);
\end{tikzpicture}
\hfill
\begin{tikzpicture}[scale=1]
 \node (a) at (0,0) [vertexR] {$a$};
  \node (b) at (2,0) [vertexB] {$b$};
  \node (c) at (2,2) [vertexB] {$c$};
 \node (d) at (0,2) [vertexB] {$d$};
  \draw [->, line width=0.03cm, color=blue] (a) to (b);
\draw [->, line width=0.03cm, color=red] (b) to (c);
\draw [->, line width=0.03cm, color=red] (c) to (d);
\draw [->, line width=0.03cm, color=red] (d) to (a);
\draw [->, line width=0.03cm, color=blue] (a) to (c);
\draw [->, line width=0.03cm] (d) to (b);
\draw [->, line width=0.03cm, color=blue] (a) to [in=-115, out=115] (d);
\end{tikzpicture}
\hfill
\begin{tikzpicture}[scale=1]
 \node (a) at (0,0) [vertexR] {$a$};
  \node (b) at (2,0) [vertexB] {$b$};
  \node (c) at (2,2) [vertexB] {$c$};
 \node (d) at (0,2) [vertexB] {$d$};
  \draw [->, line width=0.03cm] (a) to (b);
\draw [->, line width=0.03cm, color=red] (b) to (c);
\draw [->, line width=0.03cm, color=blue] (c) to (d);
\draw [->, line width=0.03cm, color=red] (d) to (a);
\draw [->, line width=0.03cm, color=blue] (a) to (c);
\draw [->, line width=0.03cm, color=blue] (d) to (b);
\draw [->, line width=0.03cm, color=red] (c) to [in=25, out=-115] (a);

\end{tikzpicture}
\hfill
\begin{tikzpicture}[scale=1]
 \node (a) at (0,0) [vertexR] {$a$};
  \node (b) at (2,0) [vertexB] {$b$};
  \node (c) at (2,2) [vertexB] {$c$};
 \node (d) at (0,2) [vertexB] {$d$};
  \draw [->, line width=0.03cm, color=blue] (a) to (b);
\draw [->, line width=0.03cm, color=red] (b) to (c);
\draw [->, line width=0.03cm, color=red] (c) to (d);
\draw [->, line width=0.03cm, color=red] (d) to (a);
\draw [->, line width=0.03cm, color=blue] (a) to (c);
\draw [->, line width=0.03cm] (d) to (b);
\draw [->, line width=0.03cm, color=blue] (b) to [in=-25, out=115] (d);
\end{tikzpicture}
~~~
\caption{Good $a$-pairs in digraphs containing $ST_4$ and an arc in
  $\{ba, ad, ca, bd\}$. The arcs of the in-branching are in red and
  the arcs of the out-branching in blue.}\label{fig:ST4+}
\end{center}
\end{figure}
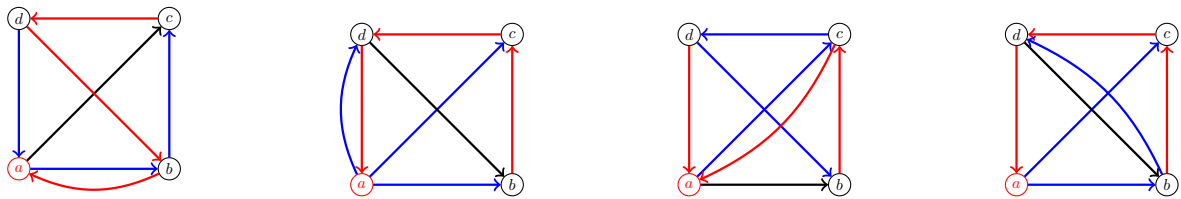
If not, then $(D,a)$ is a $4$-exception. In such a case, there is no
good $a$-pair. Indeed if there were one, then the in-branching must
contain the arcs $da$ and $cd$, and $D\setminus \{da,cd\}$ has no
out-branching because it has two sources (namely $a$ and $d$).
\end{proof}


An {\bf exception} is a pair $(D,r)$ where $D$ is a semicomplete
digraph, $r$ is a vertex of $D$, such that $N^-(r) = \{y\}$ and
$d^-(y) = 1$.

\begin{theorem}\label{nonEXCEPTION}
Let $D$ be a semicomplete digraph of order at least $4$ and let $r\in
\In(D)$.  There is a good $r$-pair if and only if $(D,r)$ is not an
exception.
\end{theorem}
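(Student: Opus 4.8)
The plan is to prove the two directions separately. \emph{Necessity} (an exception has no good $r$-pair) is short, so I would dispose of it first. Write $N^-(r)=\{y\}$ and $N^-(y)=\{z\}$. Since $r\in\In(D)$ and $|V(D)|\ge 4$, one checks $z\notin\{r,y\}$: if $z=r$ then no arc enters $\{r,y\}$ from outside, so no vertex of $V(D)\setminus\{r,y\}$ could reach $r$. Suppose $(I,O)$ is a good $r$-pair. Because $r$ is reached in the in-branching $I$ and $yr$ is its only in-arc, $yr\in A(I)$; and because $z$ (a non-root vertex) reaches $r$ along $I$, the vertex $y$ is entered through its unique in-arc $zy$, so $zy\in A(I)$ as well. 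Now in $O$ the vertex $r$ has its only in-arc $yr$ spent on $I$, forcing $r$ to be the root of $O$; likewise $y$ has its only in-arc $zy$ spent on $I$, forcing $y$ to be the root of $O$. Since $r\ne y$, this contradicts the uniqueness of the root of an out-branching.

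For \emph{sufficiency} I would induct on $n=|V(D)|$, the base case $n=4$ being the preceding Proposition (using that for order $4$ an exception is exactly a $4$-exception, which one verifies by listing the constraints $N^-(r)=\{y\}$, $d^-(y)=1$ together with $r\in\In(D)$). If $D$ is not strong, Lemma~\ref{lem:non-strong} already gives a good $(r,q)$-pair, so assume $D$ strong and $n\ge 5$. The first reduction is a \emph{$2$-cycle shortcut}: if some $z$ has $r\leftrightarrow z$, then $D\langle\{r,z\}\rangle$ has the good $r$-pair $(\{zr\},\{rz\})$, which Lemma~\ref{lem:util} lifts to all of $D$. Hence I may assume $r$ lies on no $2$-cycle, so that $V(D)\setminus\{r\}=N^+(r)\sqcup N^-(r)$.

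Now split on $|N^-(r)|$. If $|N^-(r)|\ge 2$, take a hamiltonian cycle (Theorem~\ref{thm:pancyclic}) and let $u$ be the out-neighbour of $r$ on it; deleting $u$ leaves a hamiltonian path ending at $r$, so $r\in\In(D-u)$. Since $u\in N^+(r)$, we have $N^-_{D-u}(r)=N^-(r)$, still of size $\ge 2$, so $(D-u,r)$ is not an exception; by induction $D-u$ has a good $r$-pair and Lemma~\ref{lem:util} extends it. The remaining case, which I expect to be the main obstacle, is $N^-(r)=\{y\}$ with $d^-(y)\ge 2$: here deleting one vertex can recreate the exception, so induction is fragile and I would build the pair directly. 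Set $D'=D\langle N^+(r)\rangle=D\langle V(D)\setminus\{r,y\}\rangle$. \textbf{Key claim:} $\In(D')\cap N^-(y)\neq\emptyset$. Indeed, a vertex $s$ in the terminal strong component $\In(D')$ must reach $r$ in $D$; that path ends $\dots\to a\to y\to r$ with $a\in N^-(y)$, and its initial segment from $s$ to $a$ avoids $r$ and $y$, hence lies in $D'$, which forces $a\in\In(D')$ as $s\in\In(D')$.

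Granting the claim, pick $a^{\ast}\in\In(D')\cap N^-(y)$ and an in-branching $T$ of $D'$ rooted at $a^{\ast}$; then $I:=T+a^{\ast}y+yr$ is an in-branching of $D$ rooted at $r$ in which $y$ has a single child, so the only arc of $I$ entering $y$ is $a^{\ast}y$. For the out-branching I take $O$ rooted at $r$ inside $D-A(I)$: the arcs $r\to q$ for $q\in N^+(r)$ are all untouched by $I$, and $r$ reaches $y$ via $r\to a_j\to y$ for any $a_j\in N^-(y)\setminus\{a^{\ast}\}$ (nonempty since $d^-(y)\ge 2$), the arc $a_jy$ being free. Thus $r$ reaches every vertex in $D-A(I)$, so an out-branching $O$ rooted at $r$ exists there and is automatically arc-disjoint from $I$, yielding the good $r$-pair. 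The whole difficulty concentrates in this last configuration: the fact that $r$ is forced to be the root of $O$ (its sole in-arc being consumed by $I$) is exactly what collapses in the exceptional case, and the Key claim is the structural ingredient that saves every non-exceptional instance.
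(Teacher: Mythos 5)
Your proof is correct; the necessity half is essentially the paper's argument (forcing $yr$ and $zy$ into $I$ leaves two sources for $O$). The sufficiency half, however, takes a genuinely different route. The paper stays local: it takes a directed $3$-cycle $z\to y\to r\to z$ through $r$, tries to enlarge it by one vertex $t$ to a $4$-vertex subdigraph admitting a good $r$-pair (then Lemma~\ref{lem:util} finishes), and when every enlargement fails it extracts the domination conditions $r\mapsto V(D)\setminus\{y,r\}$ and $y\mapsto V(D)\setminus\{r,y,z\}$, concluding either via Lemma~\ref{lem:non-strong} applied to $D$ minus one arc or by contradicting non-exceptionality. You instead induct on $|V(D)|$: the $2$-cycle shortcut and the vertex-deletion step for $d^-(r)\ge 2$ (where the hamiltonian cycle keeps $r\in\In(D-u)$ and $|N^-(r)|\ge 2$ keeps $(D-u,r)$ non-exceptional) reduce everything to the single configuration $N^-(r)=\{y\}$, $d^-(y)\ge 2$, which you resolve by an explicit construction resting on the observation that $\In(D-r-y)$ meets $N^-(y)$; I checked that claim and the disjointness of the resulting branchings, and both hold (in particular $r\notin N^-(y)$ because $r$ lies on no $2$-cycle, so the second in-arc of $y$ really is available for $O$). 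Both arguments use the same three ingredients --- Lemma~\ref{lem:non-strong}, Lemma~\ref{lem:util} and the order-$4$ proposition --- but yours trades the paper's case analysis of $4$-vertex subdigraphs for an inductive reduction plus one global construction, and it isolates more transparently where non-exceptionality is actually spent, namely in providing a second arc into $y$.
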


\begin{proof}
If $(D,r)$ is an exception, then let $N^-(r) = \{y\}$ and $N^-(y) =
\{z\}$.  Note that any in-branching, $I$, with root $r$, must contain
the arcs $yr$ and $zy$.  However, in $D \setminus \{yr,zy\}$ we note
that both $r$ and $y$ have in-degree zero, and therefore there is no
out-branching in $D \setminus \{yr,zy\}$, which implies that there is
no good $r$-pair in $D$.

So now assume that $(D,r)$ is not an exception. If $D$ is non-strong
then we are done by Lemma~\ref{lem:non-strong}, so we may assume that
$D$ is strongly connected.  By Theorem~\ref{thm:pancyclic}, $r$ is in
a directed $3$-cycle $zyrz$.  If there exists $t\in V(D)\setminus
\{z,y,r\}$ such that $r\in \In(D\langle\{z,y,r,t\}\rangle)$ and
$(D\langle\{z,y,r,t\}\rangle, r)$ is not a $4$-exception, then, by
Lemma~\ref{lem:util}, $D$ contains a good $r$-pair.  Henceforth, we
may assume that, for all $t\in V(D)\setminus \{z,y,r\}$, either
$r\notin \In(D\langle\{z,y,r,t\}\rangle)$ or
$(D\langle\{z,y,r,t\}\rangle, r)$ is a $4$-exception.  In both cases,
$r\mapsto t$ and $y\mapsto t$.  Hence $r\mapsto V(D)\setminus
\{y,r\}$, and $y\mapsto V(D)\setminus \{r,y,z\}$.  If $r\ra y$, then
$D'=D\setminus \{yr\}$, is a semicomplete digraph with
$\In(D')=\{r\}$, and by Lemma~\ref{lem:non-strong}, $D'$ has a good
$r$-pair, which is also a good $r$-pair in $D$.  If not, then
$N^-(r)=\{y\}$ and $N^-(y)=\{z\}$, a contradiction to $(D,r)$ not
being an exception.
\end{proof}

\begin{corollary}\label{cor:in+out}
Every semicomplete digraph $D$ of order at least $4$ has a good pair.
\end{corollary}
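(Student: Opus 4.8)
The plan is to reduce the corollary directly to Theorem~\ref{nonEXCEPTION}. A good pair is nothing but a good $r$-pair for \emph{some} in-generator $r$, so it suffices to exhibit a single vertex $r\in\In(D)$ for which $(D,r)$ is not an exception; Theorem~\ref{nonEXCEPTION} then supplies the good $r$-pair, which is the desired good pair. I would split the argument into the non-strong and the strong cases.

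If $D$ is non-strong, there is essentially nothing to do. The condensation of a semicomplete digraph is a transitive tournament, so $D$ has a unique terminal strong component and a unique initial one; these are exactly $\In(D)$ and $\Out(D)$ (within a terminal component every vertex is reachable from every other, and every outside vertex reaches into it), so both sets are non-empty. As $D$ has order at least $4$, Lemma~\ref{lem:non-strong} then produces a good $(r,q)$-pair for any $r\in\In(D)$ and $q\in\Out(D)$, and in particular a good pair.

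So assume $D$ is strong. Then every vertex can be reached from every other vertex, whence $\In(D)=V(D)$; thus every vertex is an eligible root, and I only need one of them that does not form an exception. Recall that $(D,r)$ is an exception precisely when $d^-(r)=1$ and the unique in-neighbour of $r$ again has in-degree $1$; in particular, any vertex of in-degree at least $2$ is non-exceptional. Since $D$ is semicomplete on $n\geq 4$ vertices, it has at least $\binom{n}{2}$ arcs, so the average in-degree is at least $(n-1)/2\geq 3/2>1$, and hence some vertex $r$ has $d^-(r)\geq 2$. For this $r$ the pair $(D,r)$ is not an exception, and Theorem~\ref{nonEXCEPTION} yields a good $r$-pair.

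There is no genuine obstacle beyond locating a non-exceptional in-generator, and the only place the order hypothesis enters is the counting step, where $\binom{n}{2}>n$ fails exactly at $n=3$. This is precisely why the statement is restricted to order at least $4$: a strong semicomplete digraph on $3$ vertices can be the directed triangle $z\,y\,r\,z$, in which every vertex has in-degree $1$ and its in-neighbour also has in-degree $1$, so $(D,r)$ is an exception at every root and no good pair exists.
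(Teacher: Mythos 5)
Your proof is correct and takes essentially the same route as the paper: handle the non-strong case by Lemma~\ref{lem:non-strong}, then in the strong case find a vertex $r$ with $d^-(r)\geq 2$ (guaranteed by the order hypothesis), observe that $(D,r)$ is therefore not an exception, and invoke Theorem~\ref{nonEXCEPTION}. The additional detail you supply (the averaging count and the remark about the directed triangle at $n=3$) is accurate but not needed beyond what the paper states.
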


\begin{proof}
If $D$ is non-strong the corollary follows from
Lemma~\ref{lem:non-strong}, so assume that $D$ is strongly connected.
As $D$ has order at least $4$, we note that there exists a vertex $r' \in
V(D)$ such that $d^-(r') \geq 2$.  This implies that $(D,r')$ is not
an exception and therefore there exists a good $r'$-pair $(I,O)$.
\end{proof}

In particular, notice that if $D$ is semicomplete and $\delta^0(D)\ge
2$, then either $D$ is of order at least $4$ or $D$ is a complete
digraph on $3$ vertices and in both cases, it admits a pair of
arc-disjoint in- and out-branchings.

\section{Good pairs in small digraphs}

\begin{lemma} \label{extend}
Let $D$ be a digraph and $X \subset V(D)$ be a set such that every
vertex of $X$ has both an in-neighbour and an out-neighbour in $V-X$.
If $D-X$ has a good pair then $D$ has a good pair.
\end{lemma}

\begin{proof}
Let $(I,O)$ be a good pair of $D-X$.  By assumption, every $x\in X$
has an out-neighbour $y_x$ in $V(D)\setminus X$ and an in-neighbour
$w_x$ in $V(D)\setminus X$.  Then $(I+\{xy_x \mid x\in X\}, O+ \{w_xx
\mid x\in X\})$ is a good pair for $D$.
\end{proof}

\begin{proposition}\label{prop:n=3}
Every digraph on $3$ vertices with at least $4$ arcs has a good pair.
\end{proposition}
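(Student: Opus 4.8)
The plan is to exploit the fact that on three vertices a good pair is extremely constrained: a spanning in-branching and a spanning out-branching each consist of exactly two arcs, so a good pair uses exactly four arcs in total. Hence it suffices to locate four of the (at least four) arcs that split into an arc-disjoint in-branching and out-branching; any surplus arcs may be discarded. Since the three vertices determine only three unordered pairs and the digraph has no multiple arcs, each pair carries at most two arcs, so having at least four arcs forces, by a counting argument, at least one pair to carry a digon (a directed $2$-cycle). Writing $d$ for the number of pairs carrying a digon and $s$ for the number carrying a single arc, the arc count $2d+s\ge 4$ together with $d+s\le 3$ gives $d\ge 1$. I would then split on the value of $d$.

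First, the case $d\ge 2$. Any two of the three pairs share a common vertex $v$, so two digons yield arcs $vu$, $uv$, $vw$, $wv$, where $u,w$ are the other two vertices. The out-star $\{vu,vw\}$ is an out-branching rooted at $v$ and the in-star $\{uv,wv\}$ is an in-branching rooted at $v$; they are arc-disjoint and hence form a good pair. This simultaneously settles every digraph with five or six arcs, since such a digraph necessarily has $d\ge 2$.

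The remaining case is $d=1$, which forces exactly four arcs with $s=2$: the single digon sits on some pair $\{x,y\}$, and the other two arcs are single arcs, one on $\{x,z\}$ and one on $\{y,z\}$, so all three pairs are adjacent. Here I would use two symmetries to cut down the orientations of the two single arcs: swapping $x$ and $y$ (the digon is invariant under this) and reversing all arcs (which exchanges in- and out-branchings and so preserves the property of having a good pair). Up to these symmetries there are just two configurations: (i) $z$ is a source, $z\to x$ and $z\to y$; and (ii) $z$ is balanced, $z\to x$ and $y\to z$. In configuration (i), the out-branching $\{zx,xy\}$ rooted at $z$ and the in-branching $\{zy,yx\}$ rooted at $x$ form a good pair. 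In configuration (ii), the out-branching $\{yx,yz\}$ rooted at $y$ and the in-branching $\{zx,xy\}$ rooted at $y$ form a good pair.

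I expect the only real subtlety to be configuration (i) of the last case, where $z$ is a source (equivalently, after reversal, a sink): there the clean common-root construction of the $d\ge 2$ case is unavailable, because no single vertex can root both branchings, and one must instead root the in- and out-branchings at \emph{different} vertices, as the explicit pair above does. Everything else is immediate, and verifying that each displayed pair of two-arc trees is indeed a spanning in-branching together with a spanning out-branching is a routine check.
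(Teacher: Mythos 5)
Your proof is correct and follows essentially the same route as the paper: both use the arc count over the three vertex pairs to force a directed $2$-cycle and then finish by exhibiting explicit two-arc branchings. The paper merely compresses your case analysis by noting that, after normalizing (swap the two digon vertices and/or reverse all arcs) so that the $2$-cycle is $(a,b,a)$ and $bc$ is an arc, the Hamiltonian path $(a,b,c)$ is simultaneously an in- and an out-branching, while the remaining two arcs form a path of length $2$ in the underlying graph and hence an in- or an out-branching.
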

\begin{proof}
Let $D$ be a digraph on $3$ vertices $a,b,c$ and with at least $4$
arcs.  By symmetry, and without loss of generality, $D$ has a
$2$-cycle $(a,b,a)$ and $bc$ is an arc.  Then the path $P=(a,b,c)$ is both
an in- and an out-branching.  But $Q=D\setminus A(P)$ is a path of
length $2$, which is necessarily an out- or an in-branching.  Hence
either $(P,Q)$ or $(Q,P)$ is a good pair of $D$.
\end{proof}

Let $E_4$ be the digraph depicted in Figure~\ref{fig:E4}.
\begin{figure}[H]
\begin{center}
\tikzstyle{vertexB}=[circle,draw, minimum size=14pt, scale=0.6, inner sep=0.5pt]
\tikzstyle{vertexR}=[circle,draw, color=red!100, minimum size=14pt, scale=0.6, inner sep=0.5pt]

\begin{tikzpicture}[scale=1]
 \node (a) at (0,0) [vertexB] {$y$};
  \node (b) at (2,0) [vertexB] {$x$};
  \node (c) at (2,2) [vertexB] {$y'$};
 \node (d) at (0,2) [vertexB] {$x'$};
  \draw [->, line width=0.03cm] (a) to [in=160, out =20] (b);
\draw [->, line width=0.03cm] (b) to [in=-20, out =-160] (a);
\draw [->, line width=0.03cm] (c) to [in=-20, out =-160] (d);
\draw [->, line width=0.03cm] (d) to [in=160, out =20] (c);
\draw [->, line width=0.03cm] (b) to (c);
\draw [->, line width=0.03cm] (d) to (a);
\end{tikzpicture}
\caption{The digraph $E_4$.}\label{fig:E4}
\end{center}
\end{figure}
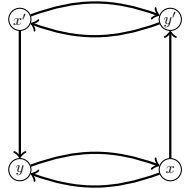

\begin{proposition}\label{prop:small4}
Let $D$ be a digraph $D$ of order $4$ with at least $6$ arcs and with
$\delta^0(D)\geq 1$.  Then $D$ has a good pair if and only if $D\neq
E_4$.
\end{proposition}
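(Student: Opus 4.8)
The plan is to establish the two directions separately: necessity (the digraph $E_4$ has no good pair) by a direct cut argument, and sufficiency (every other such $D$ has a good pair) by a degree-counting reduction to the three-vertex case, which leaves exactly one exceptional family to settle by a short explicit analysis.

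\medskip
\noindent\textbf{Necessity.} Since $|A(E_4)|=6$ and any good pair $(I,O)$ consists of two arc-disjoint spanning trees with $|A(I)|=|A(O)|=3$, a good pair must use every arc of $E_4$. Consider the bipartition $S=\{x,y\}$, $\bar S=\{x',y'\}$ (figure labels); the only arc from $S$ to $\bar S$ is $xy'$, and the only arc from $\bar S$ to $S$ is $x'y$. A spanning branching must contain at least one crossing arc, so if both $xy'$ and $x'y$ lay in $O$ then $I$ would contain neither and fail to span, and symmetrically both cannot lie in $I$; hence exactly one crossing arc lies in $O$ and the other in $I$. In the case $xy'\in O$, $x'y\in I$: as $xy'$ is the only $S\to\bar S$ arc of $O$, the root of $O$ lies in $S$, so $O$ enters $\bar S$ at $y'$ and must use $y'x'$ to reach $x'$; but then the unique out-arc $y'x'$ of the out-degree-one vertex $y'$ is unavailable to $I$, while $y'\in\bar S$ is not the root of $I$, a contradiction. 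The remaining case is symmetric under the automorphism $x\leftrightarrow x'$, $y\leftrightarrow y'$.

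\medskip
\noindent\textbf{Reduction for sufficiency.} First I would record that applying Lemma~\ref{extend} to a single vertex $x$ turns a good pair of $D-x$ into one of $D$, the hypothesis on $x$ being immediate from $\delta^0(D)\ge 1$. Since $|A(D-x)|=|A(D)|-d^+(x)-d^-(x)$, and Proposition~\ref{prop:n=3} yields a good pair of the three-vertex digraph $D-x$ whenever $|A(D-x)|\ge 4$, the reduction succeeds as soon as some vertex satisfies $d^+(x)+d^-(x)\le |A(D)|-4$. Suppose no such vertex exists; then every vertex has total degree at least $|A(D)|-3$, so $2|A(D)|=\sum_v(d^+(v)+d^-(v))\ge 4(|A(D)|-3)$, giving $|A(D)|\le 6$. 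Hence the reduction always applies when $|A(D)|\ge 7$, and when $|A(D)|=6$ it fails only if every vertex has total degree exactly $3$, i.e.\ $D$ has exactly two vertices of out-degree $2$ (in-degree $1$) and two of out-degree $1$ (in-degree $2$).

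\medskip
\noindent\textbf{The exceptional family.} This is the genuine work. Writing $S$ for the two out-degree-$2$ vertices and $T$ for the two out-degree-$1$ vertices, I would count the arcs within $S$, within $T$, from $S$ to $T$, and from $T$ to $S$. The degree constraints force the numbers of internal arcs in $S$ and in $T$ to be equal, say $p$, with exactly $4-p$ arcs from $S$ to $T$ and $2-p$ from $T$ to $S$; since both internal counts lie in $\{0,1,2\}$, we get $p\in\{0,1,2\}$. I would then verify that each value of $p$ determines $D$ up to isomorphism: $p=0$ forces all four $S\to T$ arcs together with a perfect matching $T\to S$, which is precisely $E_4$; for $p=1$ and $p=2$ the digraph is again unique up to isomorphism and carries two arc-disjoint Hamiltonian paths, one of which serves as the out-branching (rooted at its first vertex) and the other as the in-branching (rooted at its last vertex), giving a good pair that I would exhibit explicitly. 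The main obstacle is thus concentrated entirely in confirming the completeness of this small enumeration and that $E_4$ is its unique member with no good pair; everything else is routine bookkeeping secured by the counting reduction.
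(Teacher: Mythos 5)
Your proof is correct. The core reduction coincides with the paper's: both rest on Lemma~\ref{extend} combined with Proposition~\ref{prop:n=3}, since ``some vertex has total degree at most $|A(D)|-4$'' is exactly the paper's ``some induced subdigraph on $3$ vertices has at least $4$ arcs''. Where you diverge is in finishing off the residual cases. The paper first invokes Corollary~\ref{cor:in+out} to dispose of the semicomplete case and then plays the number of directed $2$-cycles against the number of non-adjacent pairs to force $D\in\{E_4,F_4\}$; you instead make the failure of the reduction quantitative (six arcs, every vertex of total degree exactly $3$, two vertices of each semidegree type) and enumerate by the number $p$ of arcs inside the out-degree-$2$ class $S$. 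I checked this enumeration: $p\in\{0,1,2\}$ does determine $D$ up to isomorphism, yielding $E_4$, the strong tournament $ST_4$, and the paper's $F_4$ respectively, and the latter two do split into two arc-disjoint Hamiltonian paths (for $p=1$, with $S=\{s_1,s_2\}$, $T=\{t_1,t_2\}$ and internal arcs $s_1s_2$, $t_1t_2$, take $s_1s_2t_1t_2$ and $s_2t_2s_1t_1$), so the good pairs you promise to exhibit exist. Your route buys independence from Corollary~\ref{cor:in+out}, since the tournament case is absorbed into the enumeration, at the cost of a slightly longer structural analysis. Your cut argument for necessity is also sound and usefully expands the paper's bare ``it is easy to check that $E_4$ has no good pair''; the only points left implicit are the two symmetric assertions that the roots of $O$ and of $I$ must lie in $S$, each of which follows because the unique crossing arc in the relevant direction has been assigned to the other branching.
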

\begin{proof}
Observe that $D$ has at least as many directed $2$-cycles as it has
pairs of non-adjacent vertices.

It is easy to check that $E_4$ has no good pair.  Assume that $D\neq
E_4$ and has no good pair.  Then, by Corollary~\ref{cor:in+out}, $D$
is not semicomplete. Hence it has at least one pair of non-adjacent
vertices and thus at least one directed $2$-cycle $C$.  By
Proposition~\ref{prop:n=3} and Lemma~\ref{extend}, $D$ has no
subdigraph of order $3$ with at least $4$ arcs.  In particular, every
vertex $x$ in $V(D)\setminus V(C)$ is adjacent to at most one vertex
of $C$.  Hence $D$ contains at least two pairs of non-adjacent
vertices and thus at least two directed cycles.  Furthermore, no two
directed cycles can intersect, for otherwise their union is a digraph
of order $3$ with four arcs.  Hence $D$ has exactly two directed
$2$-cycles, $C$ and $C'$, and there are two pairs of non-adjacent vertices
forming a matching between the vertices of $C$ and $C'$.  Since $D\neq
E_4$, it must be the digraph $F_4$ depicted in Figure~\ref{fig:F4}. But, as
shown in Figure~\ref{fig:F4}, $F_4$ has a good pair.
\end{proof}

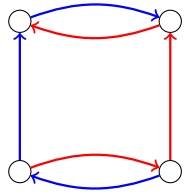
\begin{figure}[H]
\begin{center}
\tikzstyle{vertexB}=[circle,draw, minimum size=14pt, scale=0.6, inner sep=0.5pt]
\tikzstyle{vertexR}=[circle,draw, color=red!100, minimum size=14pt, scale=0.6, inner sep=0.5pt]

\begin{tikzpicture}[scale=1]
 \node (a) at (0,0) [vertexB] {};
  \node (b) at (2,0) [vertexB] {};
  \node (c) at (2,2) [vertexB] {};
 \node (d) at (0,2) [vertexB] {};
  \draw [->, color=red, line width=0.03cm] (a) to [in=160, out =20] (b);
\draw [->, color=blue, line width=0.03cm] (b) to [in=-20, out =-160] (a);
\draw [->, color=red, line width=0.03cm] (c) to [in=-20, out =-160] (d);
\draw [->, color=blue, line width=0.03cm] (d) to [in=160, out =20] (c);
\draw [->, color=red, line width=0.03cm] (b) to (c);
\draw [->, color=blue, line width=0.03cm] (a) to (d);
\end{tikzpicture}
\caption{Digraph $F_4$ and one of its good pairs. The arcs of the
  in-branching are in red and the arcs of the out-branching in
  blue.}\label{fig:F4}
\end{center}
\end{figure}

\begin{proposition}\label{prop:small}
Every digraph $D$ with $\delta^0(D)\geq 2$ and order at most $5$ has a good pair.
\end{proposition}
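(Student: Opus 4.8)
The plan is to reduce the statement to the order-$4$ results already proved (Proposition~\ref{prop:small4} and Corollary~\ref{cor:in+out}) by deleting a single vertex and then restoring it through Lemma~\ref{extend}. First I would dispose of the small orders. A digraph of order at most $2$ cannot satisfy $\delta^0(D)\ge 2$, so the order is $3$, $4$ or $5$. If $D$ has order $3$, then $\delta^0(D)\ge 2$ forces every vertex to have in- and out-degree exactly $2$, so $D$ is the complete digraph on $3$ vertices, which has a good pair (take $\{ab,ac\}$ as out-branching and $\{ba,ca\}$ as in-branching). If $D$ has order $4$, then $\delta^0(D)\ge 2$ gives $|A(D)|\ge 8\ge 6$ and $\delta^0(D)\ge 1$, and moreover $D\neq E_4$ because $\delta^0(E_4)=1$; hence $D$ has a good pair by Proposition~\ref{prop:small4}.

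It then remains to treat order $5$, where $\delta^0(D)\ge 2$ yields $|A(D)|\ge 10$. I would choose a vertex $v$ of minimum total degree $d(v)=d^+(v)+d^-(v)$. Since the total degrees sum to $2|A(D)|$, we have $d(v)\le 2|A(D)|/5\le |A(D)|-6$, the last inequality holding because $|A(D)|\ge 10$; consequently $D-v$ is a digraph of order $4$ with $|A(D-v)|=|A(D)|-d(v)\ge 6$. Moreover every vertex $u\neq v$ keeps at least one in-neighbour and one out-neighbour after the deletion (it had at least two of each in $D$), so $\delta^0(D-v)\ge 1$. Hence, provided $D-v\neq E_4$, Proposition~\ref{prop:small4} gives a good pair of $D-v$, and since $v$ has an in-neighbour and an out-neighbour in $V(D)\setminus\{v\}$, Lemma~\ref{extend} applied with $X=\{v\}$ promotes it to a good pair of $D$.

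The step I expect to be the main obstacle is the exceptional possibility $D-v=E_4$. Here $|A(D-v)|=6$, so $d(v)=|A(D)|-6$; combined with minimality $d(v)\le 2|A(D)|/5$ this forces $|A(D)|\le 10$, hence $|A(D)|=10$, and since all total degrees are at least $4$ and sum to $20$, the digraph $D$ must be $2$-diregular (every in- and out-degree equal to $2$). Reconstructing $D$ from $E_4$ under this constraint pins it down uniquely: writing $y,y'$ for the two out-degree-$1$ vertices and $x,x'$ for the two in-degree-$1$ vertices of $E_4$, the only way to add $v$ while keeping $2$-diregularity is $\{y,y'\}\to v\to\{x,x'\}$. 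For this unique $D$ I would instead delete an in-degree-$1$ vertex of $E_4$, say $x$; the resulting order-$4$ digraph still has $6$ arcs and $\delta^0\ge 1$, but now some vertex (namely $y$) has both in- and out-degree $1$, whereas $E_4$ has no such vertex, so $D-x\neq E_4$. Proposition~\ref{prop:small4} and Lemma~\ref{extend} then again produce a good pair of $D$. Thus the entire difficulty is concentrated in recognising and dispatching this single $2$-diregular configuration, after which the reduction scheme closes the proof.
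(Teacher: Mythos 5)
Your proof is correct, but it takes a genuinely different route from the paper's for the main case $n=5$. The paper does not delete a vertex and invoke the order-$4$ classification; instead it argues by contradiction with a counting argument: since $|A(D)|\geq 10$ on $5$ vertices, the number of directed $2$-cycles is at least the number of non-adjacent pairs, and after excluding (via Lemma~\ref{extend}) a semicomplete subdigraph of order $4$ (Corollary~\ref{cor:in+out}) and any $3$-vertex subdigraph with $4$ arcs (Proposition~\ref{prop:n=3}), all $2$-cycles must be pairwise disjoint, so there are at most two of them while each is incident to at least three non-edges --- a contradiction. For $n=4$ the paper likewise avoids Proposition~\ref{prop:small4} altogether, simply noting that $8$ arcs force two $2$-cycles and extending one of them. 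Your reduction is more systematic --- delete a minimum-total-degree vertex, apply the full order-$4$ classification, and restore the vertex with Lemma~\ref{extend} --- at the price of having to identify and dispatch the unique $2$-diregular extension of $E_4$, which you do correctly (the key observations, that $\delta^0(E_4)=1$ so $D\neq E_4$ when $n=4$, that $2|A|/5\leq |A|-6$ for $|A|\geq 10$, and that deleting an in-degree-$1$ vertex of the embedded $E_4$ yields a $6$-arc digraph with a vertex of in- and out-degree both $1$, hence not $E_4$, all check out). The paper's argument buys independence from the $E_4$ exception; yours buys a shorter, more mechanical case analysis once Proposition~\ref{prop:small4} is available.
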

\begin{proof}
Since $\delta^0(D)\geq 2$, the order $n$ of $D$ is at least $3$.

If $n=3$, then $D$ is the complete digraph on three vertices, which
contains a good pair.

If $n = 4$, then $D$ has at least $8$ arcs, and so at least two
directed $2$-cycles.  A directed $2$-cycle has a good pair, so by
Lemma~\ref{extend}, $D$ has a good pair.

If $n=5$, then $D$ has at least $10$ arcs.  Hence $D$ has at least as
many directed $2$-cycles as it has pairs of non-adjacent vertices.  If
$D$ contains a semicomplete digraph $D'$ on $4$ vertices, then, by
Corollary~\ref{cor:in+out}, $D'$ has a good pair, and so by
Lemma~\ref{extend}, $D$ has a good pair.  Henceforth, we may assume
that $D$ contains no semicomplete digraph of order $4$. Thus $D$ has
at least two pairs of non-adjacent vertices and thus at least two
directed $2$-cycles.

If $D$ contains a subdigraph on $3$ vertices with $4$ arcs, then this
digraph has a good pair by Proposition~\ref{prop:n=3}, and so $D$ has
a good pair by Lemma~\ref{extend}.  Henceforth we may assume that $D$
contains no subdigraph on $3$ vertices with $4$ arcs.  But this is
impossible, indeed if this was the case, then all directed $2$-cycles are
vertex disjoint, so there are at most two of them, and each directed $2$-cycle
is incident to at least three non-edges. This contradicts the fact
that the number of pairs of non-adjacent vertices is no greater than
the number of directed $2$-cycles.
\end{proof}

\begin{lemma}\label{lem:3good} If $D=(V,A)$ is a digraph on $n$
    vertices with $\lambda{}(D)=2$ that contains a subdigraph on $n-3$
    vertices with a good pair, then $D$ has a good pair. In
    particular, if $D$ has $6$ vertices and contains a subdigraph on $3$
    vertices which has a least $4$ arcs then $D$ has a good pair.
  \end{lemma}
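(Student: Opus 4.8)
The plan is to use the good pair $(I',O')$ of the subdigraph $D'$ on $n-3$ vertices and attach the three vertices of $X:=V(D)\setminus V(D')$ to it. Extending $(I',O')$ to a good pair of $D$ amounts to choosing, for the in-branching, one out-arc at each $x\in X$ so that these arcs route every vertex of $X$ into $V(D')$ (giving each such $x$ out-degree $1$, with no directed cycle inside $X$), and, for the out-branching, one in-arc at each $x\in X$ so that every $x$ becomes reachable from $V(D')$; the only requirement is that the at most six newly added arcs be pairwise distinct (the arcs of $I'$ and $O'$ lie inside $V(D')$, so they never clash with arcs incident with $X$). Since an added in-branching arc out of $X$ and an added out-branching arc into $X$ can never coincide, a conflict can occur only on an arc with both endpoints in $X$. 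Equivalently, contracting $V(D')$ to a single vertex $s$, one seeks arc-disjoint out- and in-branchings of the resulting $4$-vertex digraph both rooted at $s$; these use the arcs incident with $s$ in opposite directions, so disjointness is again only an issue among the arcs inside $X$.

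First I would record the structure forced by $\lambda(D)=2$. Writing $X^+$ (resp.\ $X^-$) for the set of vertices of $X$ having an out-neighbour (resp.\ in-neighbour) in $V(D')$, the inequalities $d^+(X)\ge 2$ and $d^-(X)\ge 2$ give $X^+\neq\emptyset$ and $X^-\neq\emptyset$, while $d^+(x)\ge 2$ and $d^-(x)\ge 2$ for every vertex force a vertex of $X\setminus X^+$ to dominate the other two vertices of $X$, and a vertex of $X\setminus X^-$ to be dominated by the other two. If $X^+=X^-=X$, then every vertex of $X$ has both an in- and an out-neighbour in $V(D')=V\setminus X$, so Lemma~\ref{extend} applies directly.

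Otherwise I would run a short case analysis on $p:=|X\setminus X^+|$ and $q:=|X\setminus X^-|$, both of which lie in $\{0,1,2\}$; reversing all arcs swaps the roles of the two branchings and of $p$ and $q$, so it suffices to treat $p\ge q$. In each case the vertices of $X^+$ (resp.\ $X^-$) are routed directly out to (resp.\ in from) $V(D')$, and the remaining vertices are routed inside $X$ along the domination arcs guaranteed above, choosing the out-routing and the in-routing so that they avoid each other. The main obstacle is the case $p=q=2$, where $|X^+|=|X^-|=1$: if the unique vertex $a$ of $X^+$ equals the unique vertex $b$ of $X^-$, then $X$ induces a complete digraph and the routings $\{c\to a,\, d\to a\}$ and $\{a\to c,\, a\to d\}$ (for the other two vertices $c,d$) are disjoint; if $a\neq b$, one shows that $X$ contains the arcs $b\to a,\, b\to w,\, w\to a,\, w\to b,\, a\to w$ (where $w$ is the third vertex), and the two arc-disjoint directed paths $b\to w\to a\to s$ and $s\to b\to a\to w$ furnish the in- and out-routings. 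Guaranteeing arc-disjointness precisely when only one vertex of $X$ reaches, and one is reached from, $V(D')$ is the crux.

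Finally, the ``in particular'' statement is immediate: a subdigraph on $3$ vertices with at least $4$ arcs has a good pair by Proposition~\ref{prop:n=3}, and when $n=6$ such a subdigraph has exactly $n-3=3$ vertices, so the first part of the lemma applies.
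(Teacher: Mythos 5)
Your proof is correct and follows essentially the same route as the paper's: both use $\lambda(D)=2$ to force each of the three external vertices lacking an out-neighbour (resp.\ in-neighbour) in the $(n-3)$-vertex part to dominate (resp.\ be dominated by) the other two external vertices, and then extend the good pair of the subdigraph by an explicit, conflict-free choice of routing arcs, falling back on Lemma~\ref{extend} when every external vertex has both kinds of neighbours in the large part. Your case analysis on $\bigl(|X\setminus X^+|,|X\setminus X^-|\bigr)$ is somewhat more systematic than the paper's heavily ``w.l.o.g.''-compressed argument, but the key configurations and their resolutions coincide.
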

  \begin{proof}
    First, notice that the second part of the statement follows from
    Proposition~\ref{prop:n=3}. Now, let $X\subset V$ be a subset of
    size $n-3$ such that $\induce{D}{X}$ has a good pair and let
    $V-X=\{a,b,c\}$. If some vertex $v\in\{a,b,c\}$ has both an
    in-neighbour in $X$, then $\induce{D}{X+v}$ has a good pair and
    then the claim follows from Lemma \ref{extend}, so we can assume
    there is no such vertex $v$. Then there cannot exist two vertices of
    $V-X$ with an in-neighbour in $X$ and two vertices of $V-X$ with an
    out-neighbour in $X$. As $\lambda(D)\ge 2$, we may assume
    w.l.o.g. that $a$ has two in-neighbours in $x_1,x_2\in X$ and also
    that $c$ has an out-neighbour $x'_1$ in $X$. This implies that
    $ab,ac$ are arcs of $D$ and also $bc$ as $c$ has no in-neighbour
    in $X$. Suppose first that $b$ has no in-neighbour in $X$, then
    $cb$ is also an arc and now we can extend the good pair of
    $\induce{D}{X}$ by adding the arcs $x_1a,ac,cb$ to the
    out-branching of $\induce{D}{X}$ and adding the arcs $ab,bc,cx'_1$
    to the in-branching of $\induce{D}{X}$. In the case when  $b$ has an
    in-neighbour $x$ in $X$, we can extend the good pair of
    $\induce{D}{X}$ by adding the arcs $x_1a,xb,ac$ to the
    out-branching of $\induce{D}{X}$ and adding the arcs $ab,bc,cx'_1$
    to the in-branching of $\induce{D}{X}$
  \end{proof}

\section{Good pairs in co-bipartite digraphs}

\begin{theorem} \label{cobipartite}
Let $D$ be a co-bipartite digraph with $\lambda(D) \geq 2$.  Then $D$
has a good pair.
\end{theorem}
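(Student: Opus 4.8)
The plan is to prove that every co-bipartite digraph $D$ with $\lambda(D)\ge 2$ has a good pair, using the partition $V=V_1\cup V_2$ where $D\langle V_1\rangle$ and $D\langle V_2\rangle$ are both semicomplete. Since $\alpha(D)\le 2$ (any independent set meets each $V_i$ in at most one vertex), I can lean heavily on the structural results for semicomplete digraphs from Section~3, in particular Corollary~\ref{cor:in+out} and Theorem~\ref{nonEXCEPTION}. The first move is to dispose of the easy cases by size: if $|D|\le 6$ I would invoke the small-case machinery (Proposition~\ref{prop:small}, Lemma~\ref{lem:3good}, or direct checking), and more importantly, if one of the two semicomplete parts, say $D\langle V_1\rangle$, has order at least $4$, then by Corollary~\ref{cor:in+out} it already contains a good pair. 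The idea is then to try to extend this good pair to all of $D$ using Lemma~\ref{extend}: every vertex of $V_2$ needs both an in-neighbour and an out-neighbour in $V_1$, which is plausible because $V_2$-vertices are only non-adjacent to at most one vertex of $V_1$ (co-bipartiteness forces each $V_2$-vertex to be adjacent to all but possibly some of $V_1$, but adjacency alone does not give both directions).

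The core of the argument, and where I expect the real work to lie, is handling the arcs running between $V_1$ and $V_2$. The clean strategy is: build a good pair inside the larger semicomplete side and then absorb the vertices of the other side one at a time, or in a single batch, by routing each absorbed vertex's branching-arcs across the bipartition. Because $\lambda(D)\ge 2$, the cut $(V_1,V_2)$ carries at least two arcs in each direction, and more usefully, every vertex has in- and out-degree at least $2$; combined with the semicomplete structure of each side, this should let me find, for each $v\in V_2$, an out-neighbour and an in-neighbour lying in $V_1$ (or reroute within $V_2$ when that fails). When the direct extension via Lemma~\ref{extend} is blocked — i.e.\ some vertex $v\in V_2$ has all its in-neighbours or all its out-neighbours inside $V_2$ — I would instead incorporate $v$ into a good pair of $D\langle V_2\rangle$ and then merge the two semicomplete branchings across the few crossing arcs guaranteed by $\lambda(D)\ge 2$.

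The main obstacle, I expect, is the genuinely two-sided case where \emph{neither} $V_1$ nor $V_2$ has order $\ge 4$, so that Corollary~\ref{cor:in+out} gives nothing directly; this forces $|V_1|,|V_2|\le 3$ and hence $|D|\le 6$, which is exactly the small-digraph regime, and here I would appeal to the Section~4 results (Lemma~\ref{lem:3good} and Proposition~\ref{prop:small}), noting that $\lambda(D)\ge 2$ gives $\delta^0(D)\ge 2$ so Proposition~\ref{prop:small} covers $|D|\le 5$ and Lemma~\ref{lem:3good} covers $|D|=6$ once a $3$-vertex subdigraph with $4$ arcs is exhibited (one of the two semicomplete sides of order $3$, being strong or at least dense, supplies this). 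The second obstacle is the bookkeeping in the extension step: I must ensure the arcs I add to the out-branching and the arcs I add to the in-branching stay arc-disjoint from each other and from the existing good pair, which is why routing each new vertex's two arcs to \emph{distinct} neighbours (one purely for the out-branching, one purely for the in-branching) matters, and why the $\lambda(D)\ge 2$ hypothesis is essential rather than mere strong connectivity.
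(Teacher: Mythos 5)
Your overall architecture matches the paper's (split by the sizes of the two semicomplete parts, use Corollary~\ref{cor:in+out}/Theorem~\ref{nonEXCEPTION} on the large side, absorb or merge the other side, and fall back on small-case machinery), but there are two places where the sketch does not close and one of them contains a concrete false claim. The false claim is in your treatment of $|V_1|=|V_2|=3$: you assert that one of the two semicomplete sides of order $3$, ``being strong or at least dense, supplies'' a $3$-vertex subdigraph with $4$ arcs so that Lemma~\ref{lem:3good} applies. A semicomplete digraph on $3$ vertices can be a tournament, which has exactly $3$ arcs, so no such subdigraph need exist on either side. This is precisely the hard core of the theorem: the paper's Case~3 must first reduce to the situation where both $D_1$ and $D_2$ are tournaments, then rule out (or explicitly handle) the crossing $2$-cycles, deduce that the non-adjacencies form a perfect matching between the two triangles and that every vertex has in- and out-degree exactly $2$, and finally exhibit good pairs for the remaining handful of $6$-vertex digraphs by hand (the directed $6$-cycle argument when both sides are $3$-cycles, and the explicit figures otherwise). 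Your plan has no substitute for any of this.

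The second gap is the merging step when both parts have order at least $4$. Having a good pair in each of $D_1$ and $D_2$ is not enough: to combine them you need the in-branching of $D_1$ to be rooted at a vertex $a_1$ that has an out-neighbour $a_2\in V_2$ (so that $I_1$ can be hung onto $I_2$ via $a_1a_2$), the out-branching of $D_2$ to be rooted at a vertex $b_2$ that has an in-neighbour $b_1\in V_1$, and the two crossing arcs $a_1a_2$ and $b_1b_2$ to be distinct. This forces you to invoke Theorem~\ref{nonEXCEPTION} with \emph{prescribed} roots ($a_1\in\In(D_1)$, and $b_2\in\Out(D_2)$ via reversal), to handle separately the case where $(D_1,a_1)$ or the reversed $(D_2,b_2)$ is an exception (the paper builds an ad hoc pair there, pulling the missing in-arcs of $a_1$ and $y_1$ from $V_2$ using $\delta^0\ge 2$), and to use $\lambda(D)\ge 2$ to find the second crossing arc $b_1b_2$ in $D\setminus a_1a_2$. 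You name the right tools in passing but do not carry out this construction, and the version you describe (extend via Lemma~\ref{extend}, else ``merge across the few crossing arcs'') would also fail when $|V_1|\ge 4$ and $|V_2|=3$ with $D_2$ a tournament, since then some $v\in V_2$ may have no out-neighbour in $V_1$ while $D_2$ itself has no good pair to fall back on.
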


\begin{proof}
Let $D$ be a co-bipartite digraph with vertex partition $(V_1,V_2)$,
that is $D_i=D\langle V_i\rangle$ is a semicomplete digraph for
$i=1,2$.  Without loss of generality, we may assume $|V_1|\leq |V_2|$.
If $|V(D)|\leq 5$, then we have the result by
Proposition~\ref{prop:small}.  Therefore we may assume $|V(D)|\geq 6$.
We distinguish several cases.

{\bf Case 1:} $|V_1|\ge 4$. Let $a_1a_2$ be an arc from $\In(D_1)$ to
$V_2$ in $D$, which exists as $D$ is strongly connected and there is
no arc from $\In(D_1)$ to $V_1\setminus \In(D_1)$.

First assume that $(D_1,a_1)$ is an exception and denote by $y_1$ the
unique in-neighbour of $a_1$ in $D_1$ and by $z_1$ the unique
in-neighbour of $y_1$ in $D$. By Corollary~\ref{cor:in+out}, as
$D_2$ contains at least four vertices, it admits a good pair
$(I_2,O_2)$. Moreover, let $I_1$ be an in-branching of $D_1$ rooted at
$a_1$, and $O_1$ be the out-branching of $D_1\setminus y_1$ containing
all the arcs leaving $a_1$. Now, as $a_1$ and $y_1$ have in-degree at
least 2 in $D$ they respectively have an in-neighbour $a_1'$ and
$y_1'$ in $V_2$.  Then $(I_2 +a_1a_2 + I_1,O_2+a_1'a_1+y_1'y_1+ O_1)$
is a good pair of $D$.

 Assume now that $(D_1,a_1)$ is not an exception. By
 Theorem~\ref{nonEXCEPTION}, $D_1$ admits a good $a_1$-pair
 $(I_1,O_1)$. We shall find a similar pair for $D_2$. As
 $\lambda(D)\ge 2$, $D\setminus a_1a_2$ is strong and so, there exists
 an arc $b_1b_2$ of $D\setminus a_1a_2$ from $V_1$ to $\Out(D_2)$.
 Consider the digraph $\tilde{D}$ obtained from $D$ by reversing all
 its arcs, and set $\tilde{D_2} = \tilde{D}\langle V_2\rangle$. As
 $\In(\tilde{D_2})=\Out(D_2)$, $b_2$ is a vertex of
 $\In(\tilde{D_2})$. If $(\tilde{D_2}, b_2)$ is an exception, then we
 conclude as previously that $\tilde{D}$ has a good pair. Thus, $D$
 has also a good pair. Otherwise, if $(\tilde{D_2}, b_2)$ is not an
 exception, by Theorem~\ref{nonEXCEPTION}, $\tilde{D_2}$ admits a good
 $b_2$-pair $(\tilde{O_2},\tilde{I_2})$. It means that $D_2$ admits a
 good pair $(I_2,O_2)$ such that $b_2$ is the root of out-branching
 $O_2$. In this case, $(I_2+a_1a_2+ I_1, O_1+b_1b_2+ O_2)$ is a
 good pair of $D$.

{\bf Case 2:} $|V_1|\le 2$.
 Then $|V_2|\geq 4$ since $|V(D)|\geq 6$.
Hence, by Corollary~\ref{cor:in+out}, $D_2$ has a good pair.
Thus, by Lemma~\ref{extend}, $D$ has also a good pair.

{\bf Case 3:} $|V_1| = 3$. If $D_2$ admits a good pair, then we
conclude by Lemma~\ref{lem:3good}.  So we can assume that $D_2$ has
no good pair.  By Corollary~\ref{cor:in+out} we have $|V_2|=3$. If
$D_1$ has a good pair, then we apply Lemma\ref{lem:3good}.  Therefore
we may assume that $D_1$ has also no good pair.  By
Proposition~\ref{prop:n=3}, $D_1$ and $D_2$ are tournaments.  This
implies that each vertex of $V_i$ is incident to at least two
  arcs whose other end-vertex is in $V_{3-i}$ for all $i\in [2]$.

If $D$ contains a semicomplete subdigraph $D'$ of order $4$, then by
Corollary~\ref{cor:in+out}, $D'$ has a good pair, and so by
Lemma~\ref{extend}, $D$ has a good pair.  Thus we may assume that $D$
has no such subdigraph. In particular each vertex of $V_i$ is
non-adjacent to at least one vertex in $V_{3-i}$ for all $i\in [2]$.
Suppose that some vertex in $v_1\in V_1$ forms a 2-cycle with a vertex
$v_2\in V_2$. Then we can assume, by Lemma \ref{lem:3good}, that $v_i$
is non-adjacent to the two other vertices of $V_{3-i}$ for $i=1,2$. By
the remark above $D-\{v_1,v_2\}$ is not semicomplete so it contains
two non-adjacent vertices which are respectively in $V_1$ and
$V_2$. As these vertices have in- and out-degree at least 2, we
conclude that each vertex of $V_i$ forms a 2-cycle with one vertex of
$V_{3-i}$ for $i=1,2$. Furthermore, we can assume that none of these
2-cycles have a vertex in common, otherwise we conclude with
Lemma~\ref{lem:3good}. Now we see that $D$ is one of the two digraphs
in Figure \ref{fig:6vertex} in which we show a good pair for $D$.

\begin{figure}[H]
\begin{center}
\tikzstyle{vertexB}=[circle,draw, minimum size=14pt, scale=0.6, inner sep=0.5pt]
\tikzstyle{vertexR}=[circle,draw, color=red!100, minimum size=14pt, scale=0.6, inner sep=0.5pt]
\begin{tikzpicture}[scale=0.8]
  \node (a1) at (0,4) [vertexB] {$a_1$};
  \node (b1) at (0,2) [vertexB] {$b_1$};
  \node (c1) at (0,0) [vertexB] {$c_1$};
  \node (a2) at (2,4) [vertexB] {$a_2$};
  \node (b2) at (2,2) [vertexB] {$b_2$};
  \node (c2) at (2,0) [vertexB] {$c_2$};
  \draw[->,  line width=0.03cm, color=red]  (a1) to (b1);
  \draw[->,  line width=0.03cm, color=red]  (b2) to (c2);
  \draw[->,  line width=0.03cm, color=blue]  (a2) to (b2);
  \draw[->,  line width=0.03cm, color=blue]  (b1) to (c1);
  \draw [->, line width=0.03cm, color=blue] (a1) to  [in=150, out=30] (a2);
  \draw [->, line width=0.03cm, color=red] (b1) to  [in=150, out=30] (b2);
  \draw [->, line width=0.03cm, color=blue] (c1) to  [in=150, out=30] (c2);
  \draw [->, line width=0.03cm, color=red] (a2) to  [in=-30, out=210] (a1);
  \draw [->, line width=0.03cm, color=blue] (b2) to  [in=-30, out=210] (b1);
  \draw [->, line width=0.03cm, color=red] (c1) to [in=210, out=150] (a1);
  \draw [->, line width=0.03cm] (c2) to  [in=-30, out=210] (c1);
  \draw [->, line width=0.03cm] (c2) to [in=-30, out=30] (a2);
  
  \node (a1) at (6,4) [vertexB] {$a_1$};
  \node (b1) at (6,2) [vertexB] {$b_1$};
  \node (c1) at (6,0) [vertexB] {$c_1$};
  \node (a2) at (8,4) [vertexB] {$a_2$};
  \node (b2) at (8,2) [vertexB] {$b_2$};
  \node (c2) at (8,0) [vertexB] {$c_2$};
  \draw[->,  line width=0.03cm, color=red]  (a1) to (b1);
  \draw[->,  line width=0.03cm, color=red]  (b1) to (c1);
  \draw[->,  line width=0.03cm, color=red]  (b2) to (a2);
  \draw[->,  line width=0.03cm, color=blue]  (c2) to (b2);
  \draw[->, line width=0.03cm, color=blue]  (a2) to [in=30, out=-30] (c2);
  \draw [->, line width=0.03cm, color=blue] (a1) to  [in=150, out=30] (a2);
  \draw[->, line width=0.03cm, color=blue] (b2) to [out=210, in=-30] (b1);
  \draw[->, line width=0.03cm, color=blue] (c2) to [out=210, in=-30] (c1);
  \draw[->, line width=0.03cm, color=red] (c1) to [out=30,in= 150]   (c2);
  \draw[->, line width=0.03cm, color=red] (a2) to [out=210,in= -30]   (a1);
  \draw[->, line width=0.03cm] (b1) to [out=30, in=150] (b2);
  \draw[->, line width=0.03cm] (c1) to [out=150, in= 210] (a1);

\end{tikzpicture}
\caption{The two possible digraphs and good pairs in these when there
  are three 2-cycles.}\label{fig:6vertex}
\end{center}
\end{figure}
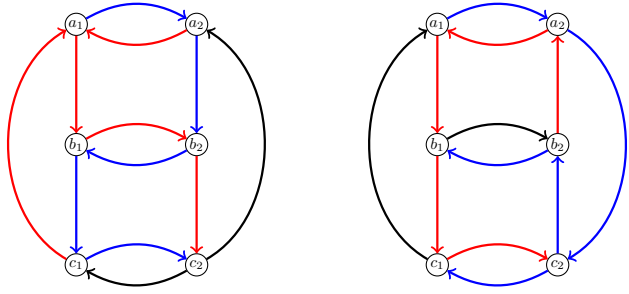
Therefore we can assume that $D$ has no directed $2$-cycle and hence one can
label the vertices of $V_1$ by $a_1, b_1, c_1$ and the vertices of
$V_2$ by $a_2, b_2, c_2$ so that $a_1$(resp. $b_1$, $c_1$) is not
adjacent to $a_2$ (resp. $b_2$, $c_2$) and adjacent to the two other
vertices and for every $i, j, k\in [2]$, $D\langle \{a_i, b_{j},
c_{k}\}\rangle$ is a tournament.

In particular, every vertex vertex of $D$ has in- and out-degree exactly $2$.
 
 If $D_1$ and $D_2$ are both directed $3$-cycles, then $D\setminus
 (A(D_1)\cup A(D_2))$ is a directed $6$-cycle $C$.  Let $a$ be an arc
 of $C$ from $V_1$ to $V_2$, let also $P_1$ be a hamiltonian directed
 path of $D_1$ ending in the tail of $a$ and $P_2$ a hamiltonian
 directed path of $D_2$ starting at the head of $a$. Then $(C\setminus a, P_1+
 a + P_2)$ is a good pair of $D$.

 If one of the $D_i$, say $D_1$, is not a directed cycle, then $D$
 must be one of the three digraphs depicted in
 Figure~\ref{fig:order6}, and so $D$ has a good pair.
 \end{proof}

 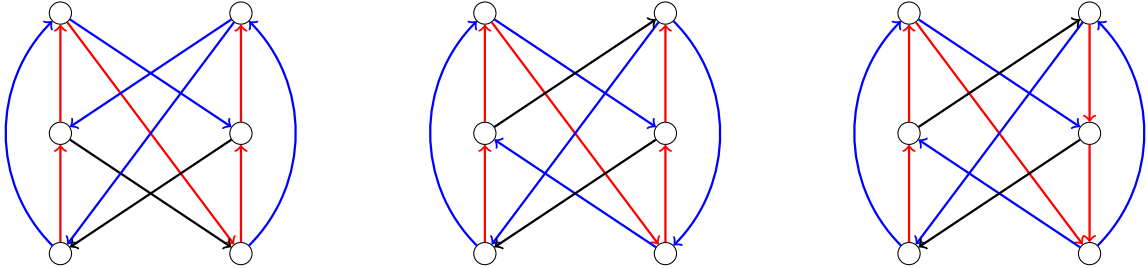
\begin{figure}[H]
\begin{center}
\tikzstyle{vertexB}=[circle,draw, minimum size=14pt, scale=0.6, inner sep=0.5pt]
\tikzstyle{vertexR}=[circle,draw, color=red!100, minimum size=14pt, scale=0.6, inner sep=0.5pt]

~~~
\begin{tikzpicture}[scale=0.8]
 \node (a1) at (0,0) [vertexB] {};
  \node (b1) at (0,2) [vertexB] {};
  \node (c1) at (0,4) [vertexB] {};
 \node (a2) at (3,0) [vertexB] {};
  \node (b2) at (3,2) [vertexB] {};
  \node (c2) at (3,4) [vertexB] {};

  \draw [->, line width=0.03cm, color=red] (a1) to (b1);
 \draw [->, line width=0.03cm, color=red] (b1) to (c1);
 \draw [->, line width=0.03cm, color=blue] (a1) to [in=-135, out=135] (c1);

 \draw [->, line width=0.03cm, color=red] (a2) to (b2);
 \draw [->, line width=0.03cm, color=red] (b2) to (c2);
 \draw [->, line width=0.03cm, color=blue] (a2) to [in=-45, out=45] (c2);

 \draw [->, line width=0.03cm, color=red] (c1) to (a2);
 \draw [->, line width=0.03cm, color=blue] (c1) to (b2);
 \draw [->, line width=0.03cm, color=black] (b1) to (a2);
 \draw [->, line width=0.03cm, color=blue] (c2) to (a1);
 \draw [->, line width=0.03cm, color=blue] (c2) to (b1);
 \draw [->, line width=0.03cm, color=black] (b2) to (a1);
 \end{tikzpicture}
\hfill
\begin{tikzpicture}[scale=0.8]
 \node (a1) at (0,0) [vertexB] {};
  \node (b1) at (0,2) [vertexB] {};
  \node (c1) at (0,4) [vertexB] {};
 \node (a2) at (3,0) [vertexB] {};
  \node (b2) at (3,2) [vertexB] {};
  \node (c2) at (3,4) [vertexB] {};

  \draw [->, line width=0.03cm, color=red] (a1) to (b1);
 \draw [->, line width=0.03cm, color=red] (b1) to (c1);
 \draw [->, line width=0.03cm, color=blue] (a1) to [in=-135, out=135] (c1);

 \draw [->, line width=0.03cm, color=red] (a2) to (b2);
 \draw [->, line width=0.03cm, color=red] (b2) to (c2);
 \draw [->, line width=0.03cm, color=blue] (c2) to [out=-45, in=45] (a2);

 \draw [->, line width=0.03cm, color=red] (c1) to (a2);
 \draw [->, line width=0.03cm, color=blue] (c1) to (b2);
 \draw [->, line width=0.03cm, color=blue] (a2) to (b1);
 \draw [->, line width=0.03cm, color=blue] (c2) to (a1);
 \draw [->, line width=0.03cm, color=black] (b1) to (c2);
 \draw [->, line width=0.03cm, color=black] (b2) to (a1);
 \end{tikzpicture}
 \hfill
\begin{tikzpicture}[scale=0.8]
 \node (a1) at (0,0) [vertexB] {};
  \node (b1) at (0,2) [vertexB] {};
  \node (c1) at (0,4) [vertexB] {};
 \node (a2) at (3,0) [vertexB] {};
  \node (b2) at (3,2) [vertexB] {};
  \node (c2) at (3,4) [vertexB] {};

  \draw [->, line width=0.03cm, color=red] (a1) to (b1);
 \draw [->, line width=0.03cm, color=red] (b1) to (c1);
 \draw [->, line width=0.03cm, color=blue] (a1) to [in=-135, out=135] (c1);

 \draw [->, line width=0.03cm, color=red] (b2) to (a2);
 \draw [->, line width=0.03cm, color=red] (c2) to (b2);
 \draw [->, line width=0.03cm, color=blue] (a2) to [in=-45, out=45] (c2);

 \draw [->, line width=0.03cm, color=red] (c1) to (a2);
 \draw [->, line width=0.03cm, color=blue] (c1) to (b2);
 \draw [->, line width=0.03cm, color=blue] (a2) to (b1);
 \draw [->, line width=0.03cm, color=blue] (c2) to (a1);
 \draw [->, line width=0.03cm, color=black] (b1) to (c2);
 \draw [->, line width=0.03cm, color=black] (b2) to (a1);
 \end{tikzpicture}
~~~
\caption{Good pairs of three co-bipartite digraphs of order $6$.}\label{fig:order6}
\end{center}
\end{figure}

\section{Good pairs in $2$-arc-strong digraphs with $\alpha{}(D)\leq 2$}

Now we are ready to prove our main result.
\begin{theorem} \label{mainX}
If $D$ is a digraph with $\alpha(D) \leq 2 \leq \lambda(D)$, then $D$
has a good pair.
\end{theorem}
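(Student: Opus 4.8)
The plan is to argue by induction on $n=|V(D)|$, reducing to configurations that the earlier lemmas already dispatch. We may assume $\alpha(D)=2$: if $\alpha(D)=1$ then $D$ is semicomplete, and a good pair is provided by Corollary~\ref{cor:in+out} (with the remark following it covering the complete digraph on $3$ vertices, since $\lambda(D)\ge2$ forces $\delta^0(D)\ge2$). That same bound $\delta^0(D)\ge2$ lets Proposition~\ref{prop:small} dispose of all $n\le5$, so we take $n\ge6$ and assume the statement for all smaller orders. Two engines will drive the inductive step: Lemma~\ref{extend}, to absorb vertices that have both an in- and an out-neighbour in an already-built part, and Lemma~\ref{lem:3good} (whose proof in fact uses only $\lambda(D)\ge2$) to absorb a final batch of at most three vertices once a good pair has been secured on $n-3$ of them.

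First I would extract the two-clique skeleton forced by $\alpha(D)=2$. Fix a non-adjacent pair $u,v$. Every other vertex is adjacent to $u$ or to $v$; the non-neighbours of $u$ form a clique $\bar N(u)\ni v$, the non-neighbours of $v$ form a clique $\bar N(v)\ni u$, and $\bar N(u)\cap\bar N(v)=\emptyset$ since a common non-neighbour would complete an independent triple with $u,v$. Hence $V$ is the disjoint union of $\bar N(u)$, $\bar N(v)$ and the set $R=N(u)\cap N(v)$ of vertices adjacent to both. If $R=\emptyset$ then $V$ splits into two cliques, so $D$ is co-bipartite and Theorem~\ref{cobipartite} finishes. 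The whole difficulty is therefore concentrated in the case $R\ne\emptyset$, where $D$ may genuinely fail to be co-bipartite (for instance the bidirected complement of $C_5$).

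For $R\ne\emptyset$ I would build a good pair on a large induced piece and then extend. When $n\ge9$, Theorem~\ref{thm:Ramsey} together with $\alpha(D)\le2$ yields a clique $K$ of order $4$, which carries a good pair by Corollary~\ref{cor:in+out}; I would then enlarge $K$ as far as possible by Lemma~\ref{extend}. When $|R|$ is small one can instead use that $D\langle\bar N(u)\cup\bar N(v)\rangle$ is co-bipartite to extract, via Theorem~\ref{cobipartite}, a good pair on $n-|R|\ge n-3$ vertices and finish with Lemma~\ref{lem:3good}. To organise the vertices that remain I would invoke the Chen--Manalastras decomposition (Theorem~\ref{thm:CMthm}), which covers $D$ by two directed cycles along which the in-branching and the out-branching can be routed in opposite senses, while the two cliques $\bar N(u),\bar N(v)$ supply, through Theorem~\ref{nonEXCEPTION}, compatible good $r$-pairs to be spliced together across $R$.

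The main obstacle, and where I expect the real work to lie, is exactly the set $R$ together with the vertices that resist Lemma~\ref{extend}, namely those with no in-neighbour, or no out-neighbour, in the part built so far. Controlling these requires choosing $u,v$ so that $R$ is small or well-attached, exploiting that the non-neighbourhood of every such vertex is again a clique, and using $\lambda(D)\ge2$ to reroute the two branchings so that no arc is used twice. I expect this to descend into a case analysis on how $R$ meets $\bar N(u)$ and $\bar N(v)$ and on whether the cliques are large enough to avoid the exceptions of Theorem~\ref{nonEXCEPTION}; the bounded residual configurations (orders $6\le n\le8$ and the exceptional four-vertex cliques) would then be settled by explicit branchings, exactly as in Case~3 of the proof of Theorem~\ref{cobipartite}.
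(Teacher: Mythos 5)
Your plan identifies the right ingredients (Ramsey to get a $4$-clique, Corollary~\ref{cor:in+out} to seed a good pair on it, Lemma~\ref{extend} to grow it, co-bipartite and small cases as base cases), but it leaves unproved exactly the step on which the whole theorem turns. Lemma~\ref{extend} only absorbs vertices having \emph{both} an in- and an out-neighbour in the current set $Q$; once the growth stalls, you are left with a set $X=N^+(Q)$ of vertices only reachable from $Q$ and a set $Y=N^-(Q)$ of vertices only reaching $Q$, and you must route the in-branching through $X$ back into $Y$ and the out-branching through $Y$ into $X$ without reusing arcs. This is the content of Claim~1 (Claim A) in the paper's proof: one takes the terminal strong components of $D\langle X\rangle$ and the initial strong components of $D\langle Y\rangle$ (there are at most two of each since $\alpha\le 2$), uses $\lambda(D)\ge 2$ to find two distinct connecting arcs serving the two branchings, and splices in an in-branching of $D\langle X\rangle$ towards an artificial sink and an out-branching of $D\langle Y\rangle$ from an artificial source. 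You explicitly defer this ("where I expect the real work to lie") rather than supply it, so the proposal is a programme, not a proof. A second concrete error: you propose to apply Theorem~\ref{cobipartite} to the induced subdigraph $D\langle\bar N(u)\cup\bar N(v)\rangle$, but that theorem requires the co-bipartite digraph itself to be $2$-arc-strong, and an induced subdigraph of a $2$-arc-strong digraph need not even be strong; so you cannot "extract a good pair on $n-|R|$ vertices" this way.

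There is also no real endgame. The paper does not grow the clique's good pair directly; it argues by contradiction, showing (via the extension machinery above) that a counterexample has maximum clique size exactly $3$, that all its $2$-cycles are vertex-disjoint and essentially absent, and hence that $7\le n\le 8$ by Theorem~\ref{thm:Ramsey}; it then takes the Hamiltonian directed path $P$ guaranteed by Theorem~\ref{thm:CMthm}, examines the strong components of $D-A(P)$, and finishes with explicit branchings in a handful of residual configurations. Your invocation of Chen--Manalastras ("two directed cycles along which the in-branching and the out-branching can be routed in opposite senses") is not an argument: the two cycles may share a subpath, whose arcs would then be demanded by both branchings, and nothing in the proposal resolves that conflict. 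As written, the proposal would not compile into a proof without supplying essentially all of Claims 1--8 of the paper's argument.
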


\begin{proof}
For the sake of contradiction, assume that $D$ has no good pair.  By
Proposition~\ref{prop:small}, $|V(D)|\geq 6$.

\begin{claim}\label{claimA}
There is no $Q \subseteq V(D)$ such that $\induce{D}{Q}$ has a good
pair and every vertex in $V(D) \setminus Q$ is adjacent to at least
one vertex in $Q$.
\end{claim}
\begin{subproof}
Suppose to the contrary that there exists $Q \subseteq V(D)$ such that
$\induce{D}{Q}$ has a good pair $(I',O')$, and every vertex in $V(D)
\setminus Q$ is adjacent to at least one vertex in $Q$. Furthermore
assume that $|Q|$ is maximum with this property. Let $X = N_D^+(Q)$
and let $Y=N_D^-(Q)$.  By the maximality of $Q$ and
Lemma~\ref{extend}, $X \cap Y = \emptyset$.

Let $X_i$, $i\in [a]$, be the terminal strong components in
$\induce{D}{X}$ and let $Y_j$, $j\in [b]$, be the initial strong
components in $\induce{D}{Y}$.  As $\alpha(D) \leq 2$ we note that $1
\leq a,b \leq 2$.

As $\lambda(D) \geq 2$ there are at least two arcs from $X_i$ (for
each $i \in[a]$) to $Y$ and at least two arcs from $X$ to $Y_j$ (for
each $j \in [b]$). Let $x_1 y$ be an arbitrary arc out of $X_1$ ($ y
\in Y$). If $y \in V(Y_1) \cup V(Y_b)$, then without loss of
generality assume that $y \in Y_1$.  Let $P_1 = \{ x_1y \}$.  There
now exists an arc, $x y_1$ from $X$ to $Y_1$ which is different from
$x y_1$ (as $Y_1$ has at least two arcs into it) and we take $P_2 =
\{x y_1\}$.  If $a=2$, then we let $x_2 y'$ be any arc out of $X_2$,
which is different from $x y_1$ and we add $x_2y'$ to $P_1$.  If
$b=2$, then we let $x' y_2$ be any arc into $Y_2$ which is different
from $x_2 y'$ (and which by the definition of $x_1 y$ is also
different from $x_1 y$) and we add $x' y_2$ to $P_2$.

Let $D_X$ be the digraph obtained from $\induce{D}{X}$ by adding one
new vertex $y^*$ and arcs from $x_i$ to $y^*$ for $i \in [a]$. Note
that $\In(D_X) = \{y^*\}$ and that there therefore exists an
in-branching $I_X$ in $D_X$ with root $y^*$.  Set $T_X=I_X-y^*$.
Analogously let $D_Y$ be equal to $\induce{D}{Y}$ after adding one new
vertex $x^*$ and arcs from $x^*$ to $y_i$ for $i \in [b]$. Note that
$\Out(D_Y) = \{x^*\}$ and that there therefore exists an out-branching
$O_Y$ in $D_Y$ with root $x^*$.  Set $T_Y=O_Y-x^*$.

Now let $I$ be the in-branching of $D$ obtained from $I'$ as
follows. For each $u \in Y$ we add $u$ and an arc from $u$ to $Q$ to
$I$. We then add to $I$ the digraph $T_X$ and the arcs in $P_1$.

Let $O$ be the out-branching of $D$ obtained from $O'$ as follows: For
each $u \in X$ we add $u$ and an arc from $Q$ to $u$ to $O$; we then
add to $O$ the digraph $T_Y$ and the arcs in $P_2$.  By construction,
$I$ and $O$ are arc-disjoint so $(I,O)$ is a good pair in $D$, a
contradiction.
\end{subproof}

\begin{claim}\label{claimB}
There is no $Q \subseteq V(D)$, such that $\induce{D}{Q}$ is not
semicomplete but does have a good pair.
\end{claim}
\begin{subproof}
 Suppose to the contrary that there exists a $Q \subseteq V(D)$, such
 that $\induce{D}{Q}$ is not semicomplete but does have a good pair
 $(I',O')$.  Let $u$ and $v$ be non-adjacent vertices in $Q$. Let $w
 \in V(D) \setminus Q$ be arbitrary.  Note that $w$ is adjacent to $u$
 or to $v$ (or both) as $\alpha(D) \leq 2$. Therefore every vertex in
 $V(D) \setminus Q$ is adjacent to at least one vertex in $Q$. This
 contradicts Claim~\ref{claimA}.
\end{subproof}

Let $R$ be a largest clique in $D$. 

\begin{claim}\label{claim1}
$|R| = 3$.
\end{claim}
\begin{subproof}
Using Ramsey theory, it is well-known that every digraph of order at
least 6 either has an independent set of size 3 or a clique of order
3.  As $\alpha(D) \leq 2$, we have $|R| \geq 3$.  Suppose to the
contrary that $|R| \geq 4$.

Let $X = N_D^+(R)$ and let $Y=N_D^-(R)$ and let $Z=V(D) \setminus (R
\cup X \cup Y)$.  For the sake of contradiction, assume that there
exists $w \in X \cap Y$.  By Corollary~\ref{cor:in+out} and
Lemma~\ref{extend}, there exists a good pair in $\induce{D}{R \cup
  \{w\}}$. Furthermore $\induce{D}{R \cup \{w\}}$ is not semicomplete,
as $R$ is a largest clique. This contradicts Claim~\ref{claimB}. So $X
\cap Y = \emptyset$.

We distinguish two cases depending on whether or not $\induce{D}{R}$
is strongly connected.

\2

{\bf Case A.} {\em $\induce{D}{R}$ is strongly connected.}

\2

Assume that some vertex $x \in X$ has two arcs, say $r_1x$ and $r_2x$,
into it from $R$.  We will first show that there is either a good
$r_1$-pair or a good $r_2$-pair in $\induce{D}{R}$.  Without loss of
generality $r_1$ is an in-neighbour of $r_2$.  Assume for a
contradiction that there is no good $r_1$-pair and no good $r_2$-pair
in $\induce{D}{R}$, which by Theorem~\ref{nonEXCEPTION} implies that
both $(R,r_1)$ and $(R,r_2)$ are exceptions.  Therefore
$N_{\induce{D}{R}}^-(r_1) = \{r'\}$ for some $r' \in V(R)$ and
$d_R^-(r') = 1$ and $N_R^-(r_2) = \{r_1\}$ and
$d_{\induce{D}{R}}^-(r_1) = 1$. This implies that $|R|=3$,
contradiction.  This contradiction implies that there is a good
$r_1$-pair or a good $r_2$-pair in $\induce{D}{R}$.  Without loss of
generality assume that there is a good $r_1$-pair, $(I_R,O_R)$, in
$\induce{D}{R}$.  Then $(I_R+r_1x, O_R+r_2x)$ is a $x$-good pair in
$\induce{D}{R \cup \{x\}}$.  Furthermore $\induce{D}{R \cup \{x\}}$ is
not semicomplete, as $R$ is a largest clique. This contradicts
Claim~\ref{claimB}.  Therefore every $x \in X$ has exactly one arc
into it from $R$.  Analogously every $y \in Y$ has exactly one arc
out of it to $R$.

Consequently, every vertex in $V(D) \setminus R$ is adjacent to at
most one vertex in $R$. Therefore, if $u,v \in V(D) \setminus R$ are
non-adjacent in $D$ then there exists $r \in R$ such that $\{u,v,r\}$
is an independent set, a contradiction.  Hence $V(D) \setminus R$ is
semicomplete and $D$ is co-bipartite. Thus, by
Theorem~\ref{cobipartite}, $D$ has a good pair, a contradiction. This
completes the proof of Case~A.

\2

{\bf Case B.} {\em $\induce{D}{R}$ is not strongly connected.}

\2

Let $R_1,R_2,\ldots , R_l$ denote the strong components of
$\induce{D}{R}$, where $l \geq 2$, such that $R_i \mapsto R_j$ for all
$1 \leq i < j \leq l$.  As $\lambda(D) \geq 2$, there are at least two
arcs out of $R_l$ in $D$. Let $r_l x$ and $r_l' x'$ be two such arcs
and note that $x,x' \in X$.  Analogously let $y r_1$ and $y' r_1'$
denote two arcs from $Y$ to $R_1$. 
By Lemma \ref{lem:non-strong}, $R$ has a good pair so it follows from the the maximality of $R$ and Claim \ref{claimB} $x,x'$ have no out-neighbour in $V(R)$ and $y,y'$ have no in-neighbour in $V(R)$.

Assume that there exists an arc $r x$ from $R$ to $x$ which is
distinct from $r_l x$.  By Lemma~\ref{lem:non-strong}, there exists a
good $r_l$-pair $(I_R,O_R)$ of $\induce{D}{R}$, and so $(I_R+r_lx,
O_R+rx)$ is a good pair in $\induce{D}{R \cup \{x\}}$. This
contradicts Claim~\ref{claimB}.

Therefore $x$ is adjacent with exactly one vertex in $R$ (namely
$r_l$).  Analogously $x'$, $y$ and $y'$ are each adjacent with exactly
one vertex in $R$.  As $\alpha(D)\leq 2$, this implies that $D' = \induce{D}{\{x,x',y,y'\}}$
is a digraph of order $4$ which must be  semicomplete. So by
  Corollary~\ref{cor:in+out}, there exists a good pair $(I',O')$ in
  $D'$. Moreover by Lemma~\ref{lem:non-strong}, $\induce{D}{R}$ has a
  good $(r_l,r_1)$-pair $(I_R,O_R)$.  Now $(I'
  \cup I_R+ r_l x, O' \cup O_R + y r_1)$ is a
  good pair for $\induce{D}{R \cup V(D')}$. This contradicts
  Claim~\ref{claimB}.
%


\end{subproof}

\begin{claim}\label{claimC}
No subdigraph of $D$ of order at least $4$ has a good pair.
\end{claim}
\begin{subproof}
Suppose to the contrary that a subdigraph $D'$ of $D$ of order at
least $4$ has a good pair.  If $D'$ is semicomplete, then it
contradicts Claim~\ref{claim1}, and if $D'$ is not semicomplete, then
it contradicts Claim~\ref{claimB}.
\end{subproof}

\begin{claim}\label{claim2}
All directed $2$-cycles are vertex disjoint.
\end{claim}
\begin{subproof}
Suppose to the contrary that two directed $2$-cycles intersect, say
$(x,y,x)$ and $(x,z,x)$ are both directed $2$-cycles in $D$. Let $X = \{x,y,z\}$.  Let
$I=(y,x,z)$ and $O=(z,x,y)$.  Note that $(I,O)$ is a good pair for
$\induce{D}{X}$.

Let $w \in V(D) \setminus X$ be arbitrary. The vertex $w$ is not
adjacent to both $x$ and $y$ for otherwise by
Proposition~\ref{prop:n=3}, $\induce{D}{\{x,y,w\}}$ has a good pair
and so by Lemma~\ref{extend}, $\induce{D}{\{x,y,z,w\}}$ has a good
pair, a contradiction to Claim~\ref{claimC}.  Similarly, $w$ is not
adjacent to both $x$ and $z$.

Assume now for a contradiction that $w$ is adjacent to $y$ and $z$.
We will again show that $\induce{D}{\{x,y,z,w\}}$ has a good pair, a
contradiction to Claim~\ref{claimC}.  If $wy,zw \in A(D)$ or $yw,wz
\in A(D)$, then $\induce{D}{X \cup \{w\}}$ has a good pair by
Lemma~\ref{extend}, because $\induce{D}{X}$ has a good pair.  If $wy,wz
\in A(D)$, then $((w, y, x,z) ; (w,z,x,y))\}$ is a good pair of
$\induce{D}{X \cup \{w\}}$.  If $yw,zw \in A(D)$, then $((y,x,z,w) ;
(z,x,y,w))$ is a good pair of $\induce{D}{X \cup \{w\}}$.  Therefore
$w$ is adjacent to at most one vertex in $X$.

If $w,w' \in V(D) \setminus X$, then $w$ and $w'$ must be adjacent,
since otherwise there is a vertex in $X$ which together with
$\{w,w'\}$ forms an independent set of size 3, a contradiction.
Therefore $D - X$ is a semicomplete digraph.

Now $\induce{D}{X}$ is not semicomplete for otherwise $D$ is
co-bipartite, a contradiction to Theorem~\ref{cobipartite}.  But
$(I,O)$ is a good pair of $\induce{D}{X}$, a contradiction to
Claim~\ref{claimB}.
\end{subproof}

\begin{claim}\label{claim3}
$\induce{D}{R}$ is a tournament.
\end{claim}
\begin{subproof} 
Suppose for a contradiction that $\induce{D}{R}$ contains a $2$-cycle
$(x,y,x)$.  Let $R=\{x,y,z\}$.

We distinguish several case depending on the arcs between $z$ and
$\{x,y\}$.

\2

{\bf Case A.}  $zx,zy \in A(D)$.

\2

As $\lambda(D) \geq 2$ there are at least two arcs leaving the set
$\{x,y\}$ in $D$.  Let $u_1v_1$ and $u_2v_2$ be two such arcs. By
Claim~\ref{claim2} we have $z \not\in \{v_1,v_2\}$.

If $v_1=v_2$, then, without loss of generality, we may assume that
$u_1v_1 = xv$ and $u_2v_2 = yv$ (where $v=v_1=v_2$).  Now let
$I'=(z,x,y,v)$ and $O'=(z,y,x,v)$ and note that $(I',O')$ is a good
pair for $\induce{D}{R \cup \{v\}}$. But $\induce{D}{R \cup \{v\}}$ is
not semicomplete by our choice of $R$. This contradicts
Claim~\ref{claimB}.  So $v_1 \neq v_2$.

As $\lambda(D) \geq 2$ there are at least two arcs entering $z$ in
$D$.  Let $r_1z$ and $r_2z$ be two such arcs. Note that $r_1 \not=
r_2$ and that by Claim~\ref{claim2} we have $r_1,r_2\notin \{x,y\}$.

Let $I=(z,x,y)$ and $O=(z,y,x)$ and note that $(I,O)$ is a good pair
in $R$.  By Lemma~\ref{extend} and Claim~\ref{claimC} we note that
there are no arcs from $R$ to $\{r_1,r_2\}$ and there are no arcs from
$\{v_1,v_2\}$ to $R$.  Therefore $Y=\{r_1,r_2,v_1,v_2\}$ is a set of
four distinct vertices.

We will now show that $Y$ is a clique, a contradiction to the
maximality of $R$. We will do this by showing that every vertex in $Y$
has at most one neighbour in $R$. This will imply the claim as
$\alpha{}(D)=2$.

\begin{itemize}
\item If a vertex in $\{r_1,r_2\}$ is adjacent to a vertex in
  $\{x,y\}$ then assume without loss of generality that $r_1$ is
  adjacent to $x$, which implies that $r_1 x \in A(D)$, by the above
  observation.  Let $I' = \{r_1x,xy,zx\}$ and $O'=\{r_1z,zy,yx\}$ and
  note that $(I',O')$ is a good pair for $\induce{D}{X \cup \{r_1\}}$,
  contradicting Claim~\ref{claimC}.  Therefore no vertex from
  $\{r_1,r_2\}$ is adjacent to a vertex in $\{x,y\}$ and so the
  vertices $r_1$ and $r_2$ are adjacent to exactly one vertex in $R$.

\item Now assume for the sake of contradiction that a vertex of
  $\{v_1, v_2\}$, say $v_1$, is adjacent to at least two vertices in
  $R$. The vertex $v_1$ is not adjacent to $x$ and $y$, for otherwise,
  we could have let $v_1=v_2$, contradicting the arguments above.  So
  we may assume that $v_1$ is adjacent to $x$ and $z$, so $xv_1, z v_1
  \in A(D)$.  Let $I' = (z,y,x,v_1)$ and $O'= (z, x,y) + zv_1$ and
  note that $(I',O')$ is a good pair for $\induce{D}{X \cup \{v_1\}}$,
  contradicting Claim~\ref{claimC}. Therefore every vertex in
  $\{v_1,v_2\}$ is adjacent to at most one vertex in $R$.
\end{itemize}
This completes the Case A. 

\2 The case when $xz,yz \in A(D)$ is proved analogously to Case A by
reversing all arcs.

\2

{\bf Case B.}  $zx,yz \in A(D)$.

\2

Let $I=(z,x)+yx$ and $O=(x,y,z)$ and note that $(I,O)$ is a good pair
for $R$.  Let $w \in V(D) \setminus R$ be arbitrary. Note that $w$
cannot have an arc to $R$ and an arc from $R$ by Lemma~\ref{extend}
and Claim~\ref{claimC}. For the sake of contradiction assume that $w$
is adjacent to at least two of the vertices in $R$. If $w$ is adjacent
to $x$ and $y$, then we would be in the above case (as $wx,wy \in
A(D)$ or $xw,yw \in A(D)$). We may, without loss of generality, assume
that $wz,wx \in A(D)$.  Let $I'=(w,z,x) + yx$ and $O'=(w,x,y,z)$ and
note that $(I',O')$ is a good pair for $\induce{D}{R \cup \{w\}}$
contradicting Claim~\ref{claimC}. Therefore $w$ has at most one
neighbour in $R$ and so $V(D)\setminus R$ is a clique.  Hence $D$ is
co-bipartite, a contradiction to Theorem~\ref{cobipartite}. This
completes the Case B.

\2

The case when $xz,zy \in A(D)$ is proved analogously to Case B by
reversing all arcs.  This completes the proof of Claim~\ref{claim3}.
\end{subproof}

\begin{claim}\label{claimNO2C}
Either $D$ has no directed $2$-cycles or $D$ has exactly one directed
$2$-cycle and $|V(D)|=7$.
\end{claim}
\begin{subproof}
Assume for a contradiction that $D$ has a directed $2$-cycle
$(x,y,x)$. Let $X$ (resp. $Y$) be the set of vertices in $V(D)\setminus
\{x,y\}$ adjacent to $x$ (resp. $y$).  Since $\delta^0(D)\geq 2$, $x$
(resp. $y$) has an in-neighbour and an out-neighbour in $X$
(resp. $Y$) and they cannot be the same by Claim~\ref{claim2}.  So
$|X|,|Y|\geq 2$.  By Claim~\ref{claim3}, $X \cap Y = \emptyset$. As no
vertex in $X$ is adjacent to $y$ (resp. $x$), $X$ (resp. $Y$) is a
clique and so $X\cup \{x\}$ (resp. $Y\cup\{y\}$) is a clique, implying
that $|X|=|Y|=2$ by Claim~\ref{claim1}.

Let $Z=V(D)\setminus (\{x,y\}\cup X\cup Y)$.  Then $Z$ is non-empty
for otherwise $D$ would be co-bipartite and hence have a good pair by
Theorem \ref{cobipartite}.  Now $X\cup Z$ and $Y\cup Z$ are cliques
since $\alpha(D)\leq 2$, so $|Z|=1$ by Claim~\ref{claim1}.  Hence $D$
has 7 vertices. Moreover we check that every pair of vertices of
  $D$ except $\{x,y\}$ is in the neighbourhood of a third vertex, and
  then cannot induce a directed 2-cycle in $D$ by Claim~\ref{claim3}.
\end{subproof}

Now we are ready to finish the proof of Theorem \ref{mainX}.  Note
that $n\geq 7$ since every digraph $D$ on at most 6 vertices with
$\alpha{}(D)=2$, $\delta^0{}(D)\geq 2$ and no 2-cycle is co-bipartite.
It also follows from Theorem \ref{thm:Ramsey} and Claim \ref{claim1}
that $n\leq 8$.

By Theorem \ref{thm:CMthm}, $D$ has a directed hamiltonian path
$P$. Let $x$ and $y$ be its initial and terminal vertex,
respectively. Let $D'$ be the digraph that we obtain by deleting all
the arcs of $P$. If $D'$ has precisely one initial strong component,
then it has an out-branching $B^+$ so $(P,B^+)$ is a good
pair. Similarly, if $D'$ has only one terminal strong component, then
it has an in-branching $B^-$ and $(B^-,P)$ is a good pair. Hence we
may assume that $D'$ is not strongly connected and that it has at
least two initial components $D'_1,D'_2$ and at least two terminal
components $D'_3,D'_4$. As $\delta^0{}(D)\geq 2$ we have
$\delta^0{}(D')\geq 1$, implying that $|D'_i|\geq 3$ for $i\in [4]$ if
$D$ has no directed $2$-cycle. Since $n\leq 8$ and $n=7$ if there is
precisely one directed $2$-cycle, $D'$ has exactly two strong
components $D'_1,D'_2$ and there are no arcs between these. This means
that every arc of $D$ that goes between $V(D'_1)$ and $V(D'_2)$
belongs to $P$.\\

Since $n\geq 7$ we may assume w.l.o.g. that $|D'_1|\geq 4$. By Claim
\ref{claimC} and Corollary~\ref{cor:in+out}, $D\langle V(D'_1)\rangle$
is not semicomplete. The digraph $D'_2$ has also order at least $2$
and if it has order at least 4 then $D\langle V(D'_2)\rangle$ is not
semicomplete.
\medskip

Suppose first that $|D'_1|=|D'_2|=4$ (in which case $D$ has no directed 2-cycle
by Claim \ref{claimNO2C}).  W.l.o.g. $x\in V(D'_1)$ so $x$ has
in-degree at least 2 in $D'_1$, implying that $D'_1$ has precisely 5
arcs (it cannot have more since then it would be semicomplete,
contradicting Claim \ref{claim1}) and hence $P$ uses no arc in
$D\langle V(D'_1)\rangle$.  Let $x^+$ be the successor of $x$ on $P$
and note that $x^+\in V(D'_2)$.

Let us first observe that $D'_1$ has an out-branching $B^+_1$ that
does not use all arcs out of $x$. This is clear if $D'_1$ is
hamiltonian so assume it is not. Then $D'_1$ is the digraph with
vertex set $z_1,z_2,z_3,z_4$ and arcs $z_1z_3,
z_1z_4,z_2z_1,z_3z_2,z_4z_2$.  Now let $B^+_{x^+}$ be an out-branching
of $D'_2$ rooted at $x^+$ and let $xz$ be an arc out of $x$ in $D'_1$
which is not in $B^+_1$. Then we obtain a good pair $(I,O)$ by letting
$I=P\setminus xx^+ + xz$ and $O=B^+_1\cup B^+_{x^+}+xx^+$, a
contradiction.

\medskip

Assume now that $|D'_2|=3$. Then $V(D'_2)$ is a clique, and so
$D\langle V(D'_2)\rangle$ has no directed $2$-cycles by
Claim~\ref{claim3}. Thus $D'_2=D\langle V(D'_2)\rangle$, $D'_2$ is a
directed 3-cycle and $P$ does not use any arc inside $D\langle
V(D'_2)\rangle$. Label the vertices of $V(D'_2)$ by $a,b,c$ so that
$P$ visits these vertices in that order. Let $a^+$ (resp. $b^+$) be
the successor of $a$ (resp. $b$) on $P$.  If $D'_2$ is the directed
3-cycle $(a,c,b,a)$, then we obtain a good pair $(I,O)$ (and a
contradiction) by letting $I=P\setminus aa^++ac$ (so the root will be
$y$) and $O=(c,b,a,a^+)\cup B^+_{a^+}$, where $B^+_{a^+}$ is any out
branching rooted at $a^+$ in $D_1'$.  If $D_2$ is the 3-cycle $(a,b,c,a)$
then we obtain a good pair $(I,O)$ (and a contradiction) by letting
$I=P\setminus bb^+ +bc$ and $O=(c,a,b,b^+)\cup B^+_{b^+}$, where
$B^+_{b^+}$ is any out-branching rooted at $b^+$ in $D_1'$.

\medskip

Assume finally that $|D'_2|=2$.  Then $D'_2$ is a directed $2$-cycle
$(a,b,a)$.  Without loss of generality, we may assume that $P$ visits
$a$ before $b$.  Let $a^+$ be successor of $a$ on $P$.  Letting
$I=P\setminus aa^+ +ab$ and $O=(b,a,a^+)\cup B^+_{a^+}$, where
$B^+_{a^+}$ is any out-branching rooted at $a^+$ in $D_1'$.  Then
$(I,O)$ is a good pair of $D$, a contradiction.  This completes the
proof of Theorem \ref{mainX}.
\end{proof}

\section{Digraphs with bounded independence number  and no good pair}

The following example shows that $\alpha(D)=2=\lambda{}(D)$ is not
sufficient to guarantee a pair of arc-disjoint branchings
$B_s^+,B_t^-$ for every choice of vertices $s,t\in V(D)$.  Let $H_1$
be the strong semicomplete digraph on four vertices $a_1,b_1,c_1,d_1$
that we obtain from the directed 4-cycle $(a_1,b_1,c_1,d_1,a_1)$ by adding the arcs
of the directed 2-cycle $(a_1,c_1,a_1)$ and the arc $d_1b_1$. Let $H_2$ be the
strong semicomplete digraph on four vertices $a_2,b_2,c_2,d_2$ that we
obtain from the directed 4-cycle $(a_2,d_2,c_2,b_2,a_2)$ by adding the arcs of the
directed 2-cycle $(a_2,c_2,a_2)$ and the arc $b_2d_2$. The digraph $W$ is obtained
from the disjoint union of $H_1$ and $H_2$ by adding the arcs of the directed 4-cycle $(d_1,d_2,b_1,b_2,d_1)$. See
Figure \ref{fig:W}. It is easy to verify that $D$ is 2-arc-strong.

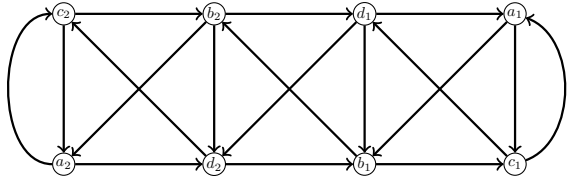
\begin{figure}[H]
\begin{center}
\tikzstyle{vertexB}=[circle,draw, minimum size=14pt, scale=0.6, inner sep=0.5pt]
\tikzstyle{vertexR}=[circle,draw, color=red!100, minimum size=14pt, scale=0.6, inner sep=0.5pt]

\begin{tikzpicture}[scale=1]
  \node (c2) at (0,2) [vertexB] {$c_2$};
  \node (a2) at (0,0) [vertexB] {$a_2$};
  \node (b2) at (2,2) [vertexB] {$b_2$};
  \node (d2) at (2,0) [vertexB] {$d_2$};
  \node (d1) at (4,2) [vertexB] {$d_1$};
  \node (b1) at (4,0) [vertexB] {$b_1$};
  \node (a1) at (6,2) [vertexB] {$a_1$};
  \node (c1) at (6,0) [vertexB] {$c_1$};
  \draw [->, line width=0.03cm] (c2) to (b2);
  \draw [->, line width=0.03cm] (b2) to (d1);
  \draw [->, line width=0.03cm] (d1) to (a1);
  \draw [->, line width=0.03cm] (a2) to (d2);
  \draw [->, line width=0.03cm] (d2) to (b1);
  \draw [->, line width=0.03cm] (b1) to (c1);
  \draw [->, line width=0.03cm] (b2) to (a2);
  \draw [->, line width=0.03cm] (b2) to (d2);
  \draw [->, line width=0.03cm] (d1) to (d2);
  \draw [->, line width=0.03cm] (a1) to (b1);
  \draw [->, line width=0.03cm] (c1) to (d1);
  \draw [->, line width=0.03cm] (b1) to (b2);
  \draw [->, line width=0.03cm] (d2) to (c2);
  \draw [->, line width=0.03cm] (d1) to (b1);
  \draw [->, line width=0.03cm] (c2) to (a2);
  \draw [->, line width=0.03cm] (a1) to (c1);
  \draw [->, line width=0.03cm] (a2) to [in=-180, out =-180] (c2);
  \draw [->, line width=0.03cm] (c1) to [in=-20, out = 20] (a1);
\end{tikzpicture}
\caption{The 2-arc-strong digraph $W$}\label{fig:W}
\end{center}
\end{figure}

\begin{proposition}\label{prop:W}
  The digraph $W$ has no pair of arc-disjoint branchings $B_{c_2}^+,B_{c_1}^-$.
\end{proposition}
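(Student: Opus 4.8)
The plan is to exploit that the only arcs of $W$ between $V(H_1)$ and $V(H_2)$ are the four arcs of the directed $4$-cycle $(d_1,d_2,b_1,b_2,d_1)$: the arcs $d_1d_2,b_1b_2$ going from $H_1$ to $H_2$, and the arcs $d_2b_1,b_2d_1$ going from $H_2$ to $H_1$. Suppose for contradiction that $W$ has arc-disjoint $O=B^+_{c_2}$ and $I=B^-_{c_1}$. Since $c_2\in V(H_2)$ and $O$ must reach every vertex of $H_1$, $O$ uses at least one arc from $H_2$ to $H_1$; since $c_1\in V(H_1)$ and every vertex of $H_2$ must reach $c_1$ along $I$, likewise $I$ uses at least one arc from $H_2$ to $H_1$. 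As the only such arcs are $d_2b_1$ and $b_2d_1$ and $O,I$ are arc-disjoint, each of $O,I$ uses exactly one of them and they use different ones. This splits the analysis into two cases according to which branching uses which arc.

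A preliminary observation I would record is that in either case $O$ enters $H_1$ through a single arc, so $O\langle V(H_1)\rangle$ is a spanning out-tree of $H_1$ rooted at the head of that arc; dually $I\langle V(H_2)\rangle$ is a spanning in-tree of $H_2$ directed towards the tail of the arc by which $I$ leaves $H_2$. This is a one-line count: each non-root vertex of $H_1$ has in-degree $1$ in $O$, exactly one of these in-arcs is a crossing arc, so the remaining three lie inside $H_1$ and connect it up, and symmetrically for $I$ and $H_2$.

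From here the strategy is pure forced-arc propagation, using three rules repeatedly: a non-root vertex has in-degree $1$ in $O$ and out-degree $1$ in $I$; an arc already placed in one branching is unavailable to the other; and no arc may close a directed cycle inside a branching. In the case where $O$ uses $b_2d_1$ and $I$ uses $d_2b_1$, one first forces $c_2b_2\in O$ (the only other in-arc $b_1b_2$ of $b_2$ would make $b_2$ both an ancestor and a descendant of $b_1$), then $c_2a_2,a_2d_2,b_2d_2\in I$ together with $b_2a_2\in O$ inside $H_2$, then $d_1a_1,a_1c_1,d_1b_1\in O$ and $a_1b_1,b_1c_1\in I$ inside $H_1$, and finally that $O$ must use $d_1d_2$ to reach $d_2$ (its other in-arcs are now in $I$). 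At that point all three out-arcs $d_1a_1,d_1b_1,d_1d_2$ of $d_1$ lie in $O$, so $I$ has no available out-arc at $d_1\ne c_1$, a contradiction. The other case (with $O$ using $d_2b_1$ and $I$ using $b_2d_1$) runs in parallel and collapses at $b_1$: one forces $d_2c_2,c_2b_2\in I$, then $c_2a_2,a_2d_2,b_1b_2\in O$ and $b_1c_1,c_1d_1\in O$, so that both out-arcs $b_1c_1,b_1b_2$ of $b_1$ lie in $O$, again leaving $b_1\ne c_1$ with no out-arc in $I$.

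The main obstacle I anticipate is bookkeeping rather than conceptual depth: at each step one must check that the forced choice is genuinely the unique option, i.e. that every competing candidate arc is either already used by the other branching or would create a directed cycle. One must also run both cases, because $W$ has no automorphism interchanging the two crossing arcs $d_2b_1$ and $b_2d_1$ while fixing $c_1$ and $c_2$; the natural subscript-swap $x_1\leftrightarrow x_2$ is only an \emph{anti}-automorphism, and since it simultaneously exchanges $O$ with $I$ and $d_2b_1$ with $b_2d_1$ it maps each case to itself rather than exchanging the two. Once the propagation is organised correctly, each branch terminates quickly at a vertex of $H_1$ all of whose out-arcs have been absorbed into the out-branching.
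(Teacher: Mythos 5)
Your proposal is correct and follows essentially the same route as the paper: a two-case analysis (your split by which of the crossing arcs $d_2b_1,b_2d_1$ lies in which branching immediately forces $c_2b_2\in B^+_{c_2}$ resp.\ $c_2a_2\in B^+_{c_2}$, which is exactly the paper's case division), followed by the same forced-arc propagation ending in the same contradictions, namely all out-arcs of $d_1$ (resp.\ of $b_1$) absorbed into the out-branching. The only cosmetic difference is that the paper phrases the forcing steps via out-degrees of vertex sets while you use in-/out-degree-one and acyclicity of the branchings; your remark that the subscript-swap anti-automorphism fixes each case rather than exchanging them correctly explains why both cases must be run.
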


\begin{proof}
  Suppose that such branchings do exist.  We first consider the case
  when the arc $c_2b_2$ is in $B_{c_2}^+$. Then the arc $c_2a_2$ is in
  $B_{c_1}^-$ and the arc $b_2a_2$ is in $B_{c_2}^+$. The set
  $\{c_2,a_2\}$ shows that the arc $a_2d_2$ is in $B_{c_1}^-$ and the
  set $\{c_2,a_2,d_2\}$ shows that $d_2b_1$ is in $B_{c_1}^-$. Now the
  set $\{c_2,a_2,b_2,d_2\}$ shows that the arc $b_2d_1$ is in
  $B_{c_2}^+$. This implies that the arc $b_2d_2$ is in $B_{c_1}^-$ .
  Next the set $\{a_2,b_2,c_2,d_2,b_1\}$ shows that the arc $b_1c_1$
  must belong to $B_{c_1}^-$ and the set $\{a_2,b_2,c_2,d_2,d_1,b_1\}$
  shows that the arc $d_1a_1$ must be in $B_{c_2}^+$. Then arc
  $a_1c_1$ must belong to $B_{c_2}^+ $, the arc $a_1b_1$ must belong
  to $B_{c_1}^-$ and the arc $d_1b_1$ must belong to $B_{c_2}^+$. Now
  all out-going arcs of $d_1$ were added to $B_{c_2}^+$,
  contradiction.

  Suppose next that the arc $c_2a_2$ belongs to $B_{c_2}^+$. Then
  analogously to the argument above we conclude that the arcs
  $c_2b_2,b_2d_1$ all belong to $B_{c_1}^-$ and the arcs
  $a_2d_2,d_2b_1,b_1c_1$ all belong to $B_{c_2}^+$. This implies that
  the arc $b_1b_2$ belongs to $B_{c_2}^+$ but then both arcs leaving
  the vertex $b_1$ are in in $B_{c_2}^+$, contradiction.
  \end{proof}

  \begin{proposition}\label{prop:infalpha3}
    There are infinitely many digraphs with arc-connectivity $2$ and
    independence number $3$ which do not have arc-disjoint branchings
    $B_s^+,B_t^-$ for some choice of vertices $s,t\in V$.
  \end{proposition}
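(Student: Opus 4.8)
The plan is to reuse the fixed obstruction $W$ of Proposition~\ref{prop:W} and attach to it an arbitrarily large semicomplete digraph, arranged so that any out-branching and in-branching rooted inside that attached part are forced to restrict on $W$ to branchings rooted at $c_2$ and $c_1$, respectively. Concretely, for each integer $k\geq 5$ let $T_k$ be a $2$-arc-strong semicomplete digraph on $k$ vertices (for instance the complete digraph, so $\alpha(T_k)=1$ and $\lambda(T_k)=k-1\geq 2$); different values of $k$ give infinitely many non-isomorphic digraphs. I would build $D_k$ from the disjoint union of $W$ and $T_k$ by adding exactly two arcs from $V(T_k)$ to $c_2$ and exactly two arcs from $c_1$ to $V(T_k)$, and no other arcs between $W$ and $T_k$. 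Thus every arc entering $V(W)$ from $V(T_k)$ ends at $c_2$, and every arc leaving $V(W)$ towards $V(T_k)$ starts at $c_1$. Since $D_k$ is $2$-arc-strong it is strong, so I may take $s,t$ to be any two (possibly equal) vertices of $T_k$.

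First I would verify the two numerical parameters. Every vertex of $T_k$ other than the at most four endpoints of the connecting arcs is non-adjacent to all of $W$, and $W$ contains a non-adjacent pair because $\alpha(W)=2$; hence, picking such a spare vertex of $T_k$ together with a non-adjacent pair of $W$ gives an independent set of size $3$. Conversely any independent set meets $V(W)$ in at most $2$ vertices (as $\alpha(W)=2$) and $V(T_k)$ in at most $1$ vertex (as $\alpha(T_k)=1$), so $\alpha(D_k)=3$. For the arc-connectivity, any proper nonempty $S$ with $S\cap V(W)\notin\{\emptyset,V(W)\}$ already has $d^+(S)\geq 2$ from $\lambda(W)\geq 2$, and symmetrically for $S\cap V(T_k)$; the only remaining nontrivial cuts are $S=V(W)$ and $S=V(T_k)$, each of which has exactly two outgoing arcs by construction. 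Hence $\lambda(D_k)=2$.

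The heart of the argument is the forcing step. Suppose for contradiction that $D_k$ has arc-disjoint branchings $B_s^+$ and $B_t^-$ with $s,t\in V(T_k)$, and set $O=B_s^+\cap A(W)$ and $I=B_t^-\cap A(W)$. Because $s\in V(T_k)$, the unique $s$--$c_2$ path in the tree $B_s^+$ can enter $V(W)$ only through $c_2$, so the in-arc of $c_2$ in $B_s^+$ is a connecting arc, while every other vertex of $W$ receives its unique $B_s^+$-in-arc from inside $W$. Thus $O$ is a subforest of the tree $B_s^+$ on vertex set $V(W)$ with exactly $|V(W)|-1$ arcs in which only $c_2$ has in-degree $0$; being an acyclic spanning subgraph of $W$ with $|V(W)|-1$ arcs, it is an out-branching of $W$ rooted at $c_2$. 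Dually, using that arcs leave $V(W)$ only at $c_1$, the restriction $I$ is an in-branching of $W$ rooted at $c_1$. As $O$ and $I$ are arc-disjoint subsets of the arc-disjoint $B_s^+,B_t^-$, the pair $(I,O)$ is a pair of arc-disjoint branchings $B_{c_1}^-,B_{c_2}^+$ in $W$, contradicting Proposition~\ref{prop:W}.

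The step I expect to require the most care is precisely this forcing argument: one must argue cleanly that the restriction to $W$ of an out-branching rooted outside $W$ is again a branching and that its root is forced to be $c_2$ (and dually that the in-branching restricts to one rooted at $c_1$). This relies entirely on the design choice that all arcs between $W$ and $T_k$ meet $W$ only in $\{c_1,c_2\}$. Keeping the connecting arcs few (two in each direction) is what simultaneously pins $\lambda(D_k)$ to $2$ and keeps the forcing valid, so the construction has little slack; the remaining checks (existence of the branchings from strong connectivity, and the routine cut analysis) are then straightforward.
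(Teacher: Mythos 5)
Your proposal is correct and follows essentially the same route as the paper: attach an arbitrarily large semicomplete digraph to the fixed obstruction $W$ so that all arcs entering $V(W)$ end at $c_2$ and all arcs leaving $V(W)$ start at $c_1$, then observe that any $B_s^+,B_t^-$ rooted in the attached part would restrict on $W$ to arc-disjoint branchings $B_{c_2}^+,B_{c_1}^-$, contradicting Proposition~\ref{prop:W}. The only differences are cosmetic (you use exactly two connecting arcs in each direction where the paper uses all of them, and you spell out the restriction and cut-counting steps that the paper leaves implicit).
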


  \begin{proof}
\jbj{For $n\geq 9$ let ${\cal W}'_n$  be the class of digraphs that we obtain from a strong semicomplete digraph $S$ on $n-8$ vertices and a copy of the digraph $W$ above by adding all possible arcs from $V(S)$ to $c_2$ and all possible arcs from $c_1$ to $V(S)$. It is easy to check that every digraph in ${\cal W}'_n$ is 2-arc-strong and has independence number 3. We claim that no digraph in ${\cal W}'_n$ has pair of arc-disjoint branchings $B^+_s,B^-_t$ where $s,t\in V(S)$. Suppose that such a digraph $W'_n$ had arc-disjoint branchings $B^+_s,B^-_t$. Then the restriction of these branchings to $V(W)$ would be an out-branching rooted at $c_2$ and an in-branching rooted at $c_1$ which are arc-disjoint, contradicting Proposition \ref{prop:W}.
    }
  \end{proof}

  \jbj{The following result  shows that there is no function $f(k)$ with the
  property that every 2-arc-strong digraph $D=(V,A)$ with $\alpha(D)=k$ and
  $|V|\geq f(k)$ has a good pair.}
  
\begin{theorem}
    \label{thm:nopairB}
    There exist infinitely many digraphs with arc-connectivity $2$ and
    independence number at most $7$ which have no good pair.  \end{theorem}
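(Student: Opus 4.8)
The plan is to amplify the independence-number-$3$ examples of Proposition~\ref{prop:infalpha3} by the three-copy idea behind Proposition~\ref{prop:noBranch}, but to glue the copies along a common semicomplete \emph{hub} rather than at a single vertex; this is what keeps the independence number down to $7$ instead of the $9$ one would get from gluing three copies of a member of ${\cal W}'_n$ at one vertex. Concretely, for each integer $m\ge 2$ I would let $S$ be a strong semicomplete digraph of order $m$ and let $W^{(1)},W^{(2)},W^{(3)}$ be three vertex-disjoint copies of the digraph $W$ of Figure~\ref{fig:W}, with $c_1^{(i)},c_2^{(i)}$ the copies of $c_1,c_2$ in $W^{(i)}$. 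Define $U_m$ to be the digraph obtained from $S\cup W^{(1)}\cup W^{(2)}\cup W^{(3)}$ by adding all arcs from $V(S)$ to $c_2^{(i)}$ and all arcs from $c_1^{(i)}$ to $V(S)$ for $i\in[3]$, and no arcs between distinct copies. Then each $\induce{U_m}{V(S)\cup V(W^{(i)})}$ is a member of ${\cal W}'_n$ with $n=m+8$, so the three copies of $W$ are attached exactly as in Proposition~\ref{prop:infalpha3} and meet only through $S$.

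First I would record the two invariants. Since $S$ is a clique, $\alpha(W)=2$, distinct copies of $W$ are non-adjacent, and a vertex of $S$ is adjacent inside $W^{(i)}$ only to $c_1^{(i)}$ and $c_2^{(i)}$, every independent set meets $S$ in at most one vertex and each $V(W^{(i)})$ in at most two; the set $\{s\}\cup\{a_1^{(i)},a_2^{(i)}:i\in[3]\}$, where $a_1,a_2$ is a non-adjacent pair of $W$ avoiding $c_1,c_2$, is independent of size $7$, so $\alpha(U_m)=7$. For the arc-connectivity, the cut $\{a_1^{(1)}\}$ gives $\lambda(U_m)\le 2$ since $a_1$ has out-degree $2$ in $W$; conversely, if a cut $A$ splits some copy then $d^+(A)\ge d^+_{W^{(i)}}(A\cap V(W^{(i)}))\ge\lambda(W)\ge 2$, while if $A$ is a union of a set $T$ of whole copies together with a subset $A_S\subseteq V(S)$, then counting the arcs inside $S$ from $A_S$, the arcs $A_S\to c_2^{(j)}$ for $j\notin T$, and the arcs $c_1^{(i)}\to V(S)\setminus A_S$ for $i\in T$ gives $d^+(A)\ge 2$ in each remaining case (the degenerate cases $A_S\in\{\emptyset,V(S)\}$ being immediate from $m\ge 2$); hence $\lambda(U_m)=2$.

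The crux is to show $U_m$ has no good pair, and here I would rerun the argument of Proposition~\ref{prop:noBranch} adapted to the hub. Suppose $(I,O)$ is a good pair with $O$ rooted at $q$ and $I$ rooted at $r$. Each of $q$ and $r$ lies in $S$ or in a single copy, so some copy $W^{(j)}$ contains neither root. The only arcs entering $W^{(j)}$ from outside are $V(S)\to c_2^{(j)}$ and the only arcs leaving it are $c_1^{(j)}\to V(S)$; since $q\notin V(W^{(j)})$, the tree-path of $O$ from $q$ to any vertex of $W^{(j)}$ enters $W^{(j)}$ exactly once, at $c_2^{(j)}$, so the arcs of $O$ with both ends in $W^{(j)}$ form an out-branching of $W^{(j)}$ rooted at $c_2^{(j)}$. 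Dually, the arcs of $I$ inside $W^{(j)}$ form an in-branching rooted at $c_1^{(j)}$. These restrictions are arc-disjoint, so $W\cong W^{(j)}$ would have arc-disjoint branchings $B_{c_2}^+,B_{c_1}^-$, contradicting Proposition~\ref{prop:W}.

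Finally, the $U_m$ have $m+24$ vertices and are pairwise non-isomorphic, so $\{U_m:m\ge 2\}$ is an infinite family with $\lambda=2$, $\alpha\le 7$ and no good pair, which proves the theorem. The step I expect to be the main obstacle is making the restriction argument fully rigorous — in particular ruling out that a tree-path of $O$ (or of $I$) re-enters $W^{(j)}$ after leaving it — which is precisely where the single-entry/single-exit structure of each copy ($c_2^{(j)}$ in, $c_1^{(j)}$ out) is used; the only other care needed is the simultaneous bookkeeping that pins $\alpha$ at exactly $7$ while keeping $\lambda=2$.
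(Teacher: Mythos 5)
Your construction is exactly the paper's: attach three disjoint copies of $W$ to a strong semicomplete hub $S$ by adding all arcs from $V(S)$ to each copy of $c_2$ and from each copy of $c_1$ to $V(S)$, then argue that some copy contains neither root and its single-entry/single-exit structure forces arc-disjoint branchings $B_{c_2}^+,B_{c_1}^-$ inside it, contradicting Proposition~\ref{prop:W}. Your write-up just supplies more detail than the paper on the verification of $\lambda=2$ and $\alpha=7$, so the proposal is correct and essentially identical to the paper's proof.
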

\begin{proof}
\jbj{Let $S$ be an arbitrary strong semicomplete digraph on $n\geq 1$ vertices and let $W_S$ be the digraph on $n+24$ vertices that we obtain from $S$ and three copies of the digraph $W$ from Proposition \ref{prop:W} by adding all possible arcs from $V(S)$ to the three copies of the vertex $c_2$ and all possible arcs from the three copies of the vertex $c_1$ to $V(S)$. Then $W_s$ is 2-arc-strong and has independence number 7. We claim that $W_s$ has no out-branching which is arc-disjoint from some in-branching. Suppose such a pair $B^+_s,B^-_t$ did exist. Then at least one copy of $W$ would contain none of $s,t$ and hence the restriction of $B^+_s,B^-_t$ to that copy would be  pair of arc-disjoint branchings $B_{c_2}^+,B_{c_1}^-$, contradicting that $W$ has no such pair. }
\end{proof}

  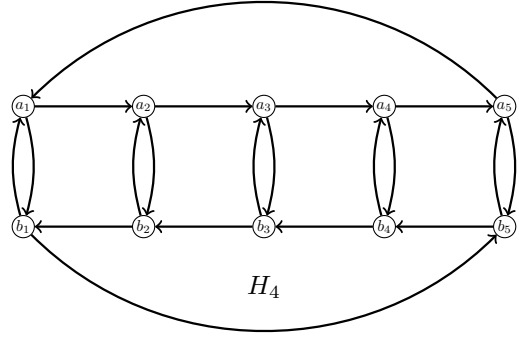
\begin{figure}[H]
\begin{center}
\tikzstyle{vertexB}=[circle,draw, minimum size=14pt, scale=0.6, inner sep=0.5pt]
\tikzstyle{vertexR}=[circle,draw, color=red!100, minimum size=14pt, scale=0.6, inner sep=0.5pt]

\begin{tikzpicture}[scale=0.8]
  \node (a1) at (0,2) [vertexB] {$a_1$};
  \node (b1) at (0,0) [vertexB] {$b_1$};
  \node (a2) at (2,2) [vertexB] {$a_2$};
  \node (b2) at (2,0) [vertexB] {$b_2$};
  \node (a3) at (4,2) [vertexB] {$a_3$};
  \node (b3) at (4,0) [vertexB] {$b_3$};
  \node (a4) at (6,2) [vertexB] {$a_4$};
  \node (b4) at (6,0) [vertexB] {$b_4$};
  \node (a5) at (8,2) [vertexB] {$a_5$};
  \node (b5) at (8,0) [vertexB] {$b_5$};
  \draw [->,line width=0.03cm] (a1) to (a2);
  \draw [->,line width=0.03cm] (a2) to (a3);
  \draw [->,line width=0.03cm] (a3) to (a4);
  \draw [->,line width=0.03cm] (a4) to (a5);
  \draw [->,line width=0.03cm] (b5) to (b4);
  \draw [->,line width=0.03cm] (b4) to (b3);
  \draw [->,line width=0.03cm] (b3) to (b2);
  \draw [->,line width=0.03cm] (b2) to (b1);
  \draw[->, line width=0.03cm] (a5) [out=135,in=45] to (a1);
  \draw[->, line width=0.03cm] (b1) [out=-45,in=225] to (b5);
  \draw[->, line width=0.03cm] (a1) [out=-75,in=75] to (b1);
  \draw[->, line width=0.03cm] (b1) [out=105,in=255] to (a1);
  \draw[->, line width=0.03cm] (a2) [out=-75,in=75] to (b2);
  \draw[->, line width=0.03cm] (b2) [out=105,in=255] to (a2);
  \draw[->, line width=0.03cm] (a3) [out=-75,in=75] to (b3);
  \draw[->, line width=0.03cm] (b3) [out=105,in=255] to (a3);
  \draw[->, line width=0.03cm] (a4) [out=-75,in=75] to (b4);
  \draw[->, line width=0.03cm] (b4) [out=105,in=255] to (a4);
  \draw[->, line width=0.03cm] (a5) [out=-75,in=75] to (b5);
  \draw[->, line width=0.03cm] (b5) [out=105,in=255] to (a5);
  \node () at (4,-1) {$H_4$};

\end{tikzpicture}
\caption{The digraph $H_4$ with $\alpha{}(H_4)=4$, $\lambda{}(H_4)=2$
  and no good pair.}\label{fig:H4}
\end{center}
\end{figure}

\begin{proposition}
  The digraph $H_4$ in Figure \ref{fig:H4} has $\alpha{}(H_4)=4$,
  $\lambda{}(H_4)=2$ and no good pair.
\end{proposition}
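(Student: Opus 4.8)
\noindent My plan is to verify the three assertions in turn; the first two are quick and the ``no good pair'' part is the substance.

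The vertex set splits into the directed $5$-cycle $A=(a_1,a_2,a_3,a_4,a_5,a_1)$ and the directed $5$-cycle $B=(b_1,b_5,b_4,b_3,b_2,b_1)$, joined only by the five $2$-cycles $(a_i,b_i,a_i)$. Any independent set meets the underlying $5$-cycle on $A$ in at most $2$ vertices and likewise on $B$, so $\alpha(H_4)\le 4$; and $\{a_1,a_3,b_2,b_4\}$ is independent (the two $a$'s and the two $b$'s are non-consecutive on their respective cycles, and the index sets $\{1,3\}$ and $\{2,4\}$ are disjoint, so no rung joins a chosen $a$ to a chosen $b$), whence $\alpha(H_4)=4$. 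For the arc-connectivity, every vertex has in- and out-degree exactly $2$, so $\lambda(H_4)\le\delta^0(H_4)=2$; for the reverse inequality I would check directly that no proper non-empty $S\subset V(H_4)$ has $d^+(S)=1$, using strong connectivity and that any such $S$ already loses at least two arcs across its cut.

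For the main claim, suppose $(I,O)$ were a good pair. As $H_4$ is $2$-regular it has $20$ arcs and $|A(I)|=|A(O)|=9$, so exactly two arcs go unused. Write $\alpha_i=a_ia_{i+1}$ and $\beta_i=b_ib_{i-1}$ for the cycle-arcs (indices in $\mathbb{Z}_5$) and $a_ib_i$, $b_ia_i$ for the down- and up-rungs at index $i$. The crucial local fact is that neither $I$ nor $O$ may contain a $2$-cycle $(a_i,b_i,a_i)$. In $I$ each non-root vertex has out-degree $1$, so $a_ib_i\in I$ iff $\alpha_i\notin I$ and $b_ia_i\in I$ iff $\beta_i\notin I$; hence at every index other than that of the root of $I$ we must have $\alpha_i\in I$ or $\beta_i\in I$. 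Dually, acyclicity of $O$ forces $\alpha_{i-1}\in O$ or $\beta_{i+1}\in O$ at every index other than that of the root of $O$ --- note the index shift, which encodes that $A$ and $B$ traverse the rungs antiparallel. Combined with arc-disjointness these give a clean trigger: if some index $i$ (not the index of the root of $I$) has $\alpha_i\in O$ and $\beta_i\in O$, then $\alpha_i,\beta_i\notin I$ force both rungs at $i$ into $I$, producing the forbidden $2$-cycle. A count of the tree-components of each branching inside $A$ and inside $B$ (each entered, resp.\ left, only through a rung or through a root) then determines how many up- and down-rungs each branching uses in terms of how many cycle-arcs it uses, and I would use this together with the covering conditions above to restrict the two unused arcs and the two roots to a short list of configurations.

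The delicate point, which I expect to be the main obstacle, is that these local and counting conditions are by themselves satisfiable by suitable subsets of $\mathbb{Z}_5$, so the contradiction cannot come from arc totals alone; it must come from realising $I$ and $O$ as genuine branchings. In particular the antiparallel cycles create mixed short cycles such as $(a_i,b_i,b_{i-1},a_{i-1},a_i)$ that the branchings must also avoid, and the prescribed roots must stay reachable (in $I$, every vertex must reach the root; in $O$, be reached from it). To keep this global realizability step finite I would quotient the case analysis by the automorphism group of $H_4$ --- the rotation $\sigma:x_i\mapsto x_{i+1}$, the involution $\tau$ swapping $A$ and $B$ via $a_i\leftrightarrow b_{-i}$, and the self-conversion of $H_4$ (reversal followed by $x_i\mapsto x_{-i}$), which lets me interchange the roles of $I$ and $O$ --- reducing to a handful of positions for the roots and the unused arcs. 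Each surviving configuration I would then kill exactly as in the trigger above: by exhibiting a forced $2$-cycle or mixed short cycle inside one branching, or a vertex that cannot reach the root of $I$ (equivalently cannot be reached from the root of $O$).
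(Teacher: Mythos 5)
Your verification of $\alpha(H_4)=4$ is correct and complete, and the degree count giving $\lambda(H_4)\le 2$ together with the (routine, though not written out) check that no proper subset has out-degree $1$ is acceptable. The gap is in the main claim. What you have written for ``no good pair'' is a plan, not a proof: after the correct observation that a good pair would leave exactly two of the $20$ arcs unused, and after some correct local facts (no branching may contain a rung $2$-cycle $(a_i,b_i,a_i)$; the out-degree-one and in-degree-one conditions at non-roots), you defer the decisive step to an unexecuted case analysis (``I would \dots restrict the two unused arcs and the two roots to a short list of configurations'', ``each surviving configuration I would then kill \dots''). You yourself flag that the local and counting conditions are by themselves satisfiable, so the contradiction must come from global realizability --- and that is exactly the part that is never carried out. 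As it stands, the argument does not establish the statement.

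The missing idea, which collapses the whole thing to a few lines, is to exploit the subdigraphs $H_4\langle\{a_i,a_{i+1},b_i,b_{i+1}\}\rangle$ for $i\in[5]$: each is a copy of the digraph $E_4$ of Figure~\ref{fig:E4}, which has no good pair by Proposition~\ref{prop:small4}. If all six arcs of such a copy were distributed between $I$ and $O$, then the out-degree condition on $I$ at $a_i$ and $b_{i+1}$, the in-degree condition on $O$ at $a_{i+1}$ and $b_i$, and the forced splitting of each rung $2$-cycle between $I$ and $O$ drive one of $I,O$ to contain the directed $4$-cycle $(a_i,a_{i+1},b_{i+1},b_i,a_i)$ --- impossible in a branching. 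Hence each of the five copies must contain at least one of the two unused arcs. But every arc of $H_4$ lies in at most two of the five copies (a rung arc lies in exactly the two consecutive copies containing its index, a cycle arc in exactly one), so two unused arcs meet at most four copies, a contradiction. This is precisely the counting contradiction your setup was heading toward but never reached; your local ``trigger'' computations are essentially a by-hand reproof of the $E_4$ obstruction, without the overlap count that finishes the argument.
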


\begin{proof}
  First observe that for each $i\in [5]$ the subdigraph
  $H_4\langle \{a_i,a_{i+1},b_i,b_{i+1}\}\rangle$, where $a_6=a_1,b_6=b_1$ induces
  a copy of the digraph $E_4$. By Proposition \ref{prop:small4}, $E_4$
  has no good pair. Now suppose that $H_4$ has a good pair
  $(I,O)$. Then these branchings must avoid at least one arc inside
  each of the five copies of $E_4$ (otherwise the restriction of $(I,O)$
  to such a copy would be a good pair in $E_4$). But $H_4$ has 20 arcs,
  18 of which must belong to either $I$ or $O$ and there is pair of arcs with at least one in each of the 
  five copies of $E_4$, contradiction.
  \end{proof}

  \begin{proposition}
    \label{prop:n=6OK}
    Every digraph on 6 vertices and arc-connectivity at least $2$ has a good pair.
  \end{proposition}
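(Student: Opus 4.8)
The plan is to split on the independence number $\alpha(D)$. Since $\lambda(D)\ge 2$ implies $\delta^0(D)\ge 2$, if $\alpha(D)\le 2$ we are immediately done by Theorem~\ref{mainX}, so I would assume $\alpha(D)\ge 3$. Fixing a maximum independent set $S$, every vertex of $S$ sends its at least two out-arcs into $V(D)\setminus S$; hence $|V(D)\setminus S|\ge 2$, so $\alpha(D)\in\{3,4\}$. Throughout I will try to locate a set $Q$ of three vertices inducing a good pair and then lift it to all of $D$, either through Lemma~\ref{extend} (when the remaining three vertices each have an in- and an out-neighbour in $Q$) or through Lemma~\ref{lem:3good} (which, for $\lambda(D)=2$, needs no such condition). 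Recall that by Proposition~\ref{prop:n=3} a triple induces a good pair exactly when it spans at least $4$ arcs.

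First I would dispose of the easy regimes. If $\lambda(D)\ge 3$, then $\delta^0(D)\ge 3$ forces $|V(D)\setminus S|\ge 3$, so $\alpha(D)=3$ and each vertex of $S$ is joined by a directed $2$-cycle to every vertex of $V(D)\setminus S$; then any triple consisting of one vertex $t\in V(D)\setminus S$ and two vertices of $S$ spans two $2$-cycles, hence a good pair, and each of the three leftover vertices lies in a $2$-cycle meeting this triple, so Lemma~\ref{extend} finishes the case. Similarly, if $\alpha(D)=4$ then $|V(D)\setminus S|=2$ and each of the four vertices of $S$ forms a $2$-cycle with both outside vertices, again producing a triple with two $2$-cycles and a good pair that extends by Lemma~\ref{extend}. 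This leaves the single hard regime $\alpha(D)=3$, $\lambda(D)=2$, and there I may assume, invoking Lemma~\ref{lem:3good}, that no triple of vertices spans $4$ or more arcs; in particular all directed $2$-cycles of $D$ are vertex-disjoint.

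Next I would pin down the structure in this regime. Write $S=\{u,v,w\}$ and $V(D)\setminus S=\{a,b,c\}$; all neighbours of $u,v,w$ lie in $\{a,b,c\}$, and since each of $u,v,w$ has in- and out-degree at least $2$ among only three vertices, its in- and out-neighbourhoods must overlap, giving it a $2$-cycle with some vertex of $\{a,b,c\}$. As the $2$-cycles are disjoint, they form a perfect matching, say $u\leftrightarrow a$, $v\leftrightarrow b$, $w\leftrightarrow c$; degree and disjointness then force each of $u,v,w$ to have exactly one further (single) out-arc and one further (single) in-arc to the other two vertices of $\{a,b,c\}$, so it is in fact adjacent to all of $a,b,c$. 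Applying the ``at most $3$ arcs per triple'' bound to triples such as $\{u,a,b\}$ and $\{u,a,c\}$ shows that $a$ is non-adjacent to $b$ and to $c$, and symmetrically that $\{a,b,c\}$ is an independent set. Thus $D$ is bipartite between the independent triples $\{u,v,w\}$ and $\{a,b,c\}$ with complete bipartite underlying graph, the three $2$-cycles forming a perfect matching and the remaining six arcs being single. These six single arcs form a $1$-regular bipartite spanning subdigraph with no $2$-cycle, which is forced to be a single directed $6$-cycle; hence $D$ is isomorphic to the unique digraph on $\{1,\dots,6\}$ with arcs $i\to i{+}1$ (indices modulo $6$) and a $2$-cycle on each pair $\{i,i{+}3\}$.

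The hard part is thereby reduced to a single digraph, which I would settle by exhibiting a good pair explicitly. The naive attempt of using the single-arc $6$-cycle as an out-branching fails, because its complement is essentially the $2$-cycle matching and contains forced short cycles, so no arc-disjoint in-branching survives; the remedy is to choose a Hamiltonian path that mixes single and $2$-cycle arcs. Concretely, taking $O$ to be the Hamiltonian path $1\to 4\to 5\to 2\to 3\to 6$ and $I$ to be the in-branching with arcs $\{1\to 2,\,2\to 5,\,3\to 4,\,4\to 1,\,5\to 6\}$ rooted at $6$ gives two arc-disjoint branchings, as one checks directly. I expect the genuine obstacle to lie not in this final verification but in the structural derivation above---in particular in proving that both sides of the bipartition are independent and that the single arcs must close up into one $6$-cycle---since this is what collapses the problem to a unique digraph; once that is established the good pair is immediate.
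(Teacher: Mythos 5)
Your proof is correct, and it shares the paper's overall skeleton: dispose of $\alpha(D)\le 2$ via Theorem~\ref{mainX}, handle $\alpha(D)=4$ through the bidirected $K_{2,4}$, and for $\alpha(D)=3$ use Lemma~\ref{lem:3good} together with Proposition~\ref{prop:n=3} to forbid any triple spanning four arcs, which forces the three $2$-cycles to form a perfect matching between the independent set and its complement. Where you genuinely diverge is in the endgame. The paper stops its structural analysis at the six remaining cross arcs $A'$ and splits into two cases (either $A'$ is a directed $6$-cycle, or some outside vertex has in-degree $2$ and another out-degree $2$ with respect to $A'$), exhibiting a good pair in a spanning subdigraph for each case via the figures $D'$ and $D''$. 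You instead push the ``at most three arcs per triple'' bound one step further, applying it to triples such as $\{u,a,b\}$ to show that the complement of the independent set is itself independent; this pins every vertex to semidegree exactly $2$ and forces the six single arcs into a single alternating $6$-cycle, so the whole case collapses to one digraph (the paper's $D'$) and the paper's second subcase becomes vacuous under the very hypotheses both proofs are working with. Your explicit pair $O=1\,4\,5\,2\,3\,6$ and $I=3\,4\,1\,2\,5\,6$ checks out (both are Hamiltonian paths in the $12$-arc digraph and share no arc). What your route buys is a cleaner, essentially case-free finish with a single target digraph; what the paper's route buys is robustness, since producing a good pair in a spanning subdigraph does not require first proving that the more degenerate configurations cannot occur. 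Your separate treatment of $\lambda(D)\ge 3$ via Lemma~\ref{extend} is a minor but legitimate precaution given that Lemma~\ref{lem:3good} is stated for $\lambda(D)=2$.
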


  \begin{proof}
    Let $D$ have 6 vertices and $\lambda{}(D)\geq 2$ and suppose that
    $D$ has no good pair.  By Theorem \ref{mainX} we may assume that
    $\alpha{}(D)\geq 3$. If $\alpha{}(D)=4$ then $D$ contains, as a
    spanning subdigraph, the digraph that we obtain from the complete
    bipartite graph $K_{2,4}$ by replacing each edge by a directed
    2-cycle and it is easy to check that this has a good pair. So we
    can assume that $\alpha{}(D)=3$. Let $X=\{x_1,x_2,x_3\}$ be an
    independent set of size 3. Then each $x_i$ is incident to a
    directed 2-cycle $(x_i,y_i,x_i)$. By Lemma \ref{lem:3good}, we can assume that
    $D$ does not contain a subdigraph on 3 vertices with a good pair,
    so by Proposition \ref{prop:n=3} we conclude that
    $|\{y_1,y_2,y_3\}|=3$ and that $d^+(x_i)=d^-(x_i)=2$ for $i\in
    \AY{[3]}$. \AY{ Let $A'$ contain all arcs between $X$ and $V(D)\setminus X$ except the arcs $x_iy_i$ and $y_ix_i$ for $i=1,2,3$.
    Note that $|A'|=6$.  
    If the arcs of $A'$} form a directed 6-cycle, then without loss of
    generality this is the 6-cycle $x_1y_2x_3y_1x_2y_3x_1$ and now $D$
    contains the spanning subdigraph $D'$ in Figure \ref{fig:6ok}
    where we only show the arcs of a good pair. So we may assume that
    $V(D)\setminus X$ has a vertex with in-degree 2 and another with out-degree 2
    wrt the arcs $A'$. W.l.o.g. $y_1$ has in-degree 2 and $y_3$ has
    out-degree 2 wrt $A'$. Now $D$ contains the spanning subdigraph
    $D''$ shown in the right part of Figure \ref{fig:6ok} together
    with a good pair.

    \begin{figure}[H]
\begin{center}
\tikzstyle{vertexB}=[circle,draw, minimum size=14pt, scale=0.6, inner sep=0.5pt]
\tikzstyle{vertexR}=[circle,draw, color=red!100, minimum size=14pt, scale=0.6, inner sep=0.5pt]

\begin{tikzpicture}[scale=0.7]
  \node (y1) at (2,0) [vertexB] {$y_1$};
  \node (x2) at (0,2) [vertexB] {$x_2$};
  \node (x3) at (4,2) [vertexB] {$x_3$};
  \node (y3) at (0,4) [vertexB] {$y_3$};
  \node (x1) at (2,6) [vertexB] {$x_1$};
  \node (y2) at (4,4) [vertexB] {$y_2$};
  \draw [->,line width=0.03cm, color=red] (y3) to (x1);
  \draw [->,line width=0.03cm, color=blue]  (x1) to (y2);
  \draw [->,line width=0.03cm,color=red] (y2) to (x3);
  \draw [->,line width=0.03cm,color=red] (x3) to (y1);
  \draw [->,line width=0.03cm, color=blue]  (x2) to (y3);
  \draw [->,line width=0.03cm, color=blue]  (x1) [out=240,in=120] to (y1);
  \draw [->,line width=0.03cm, color=blue]  (y2) [out=180,in=60] to (x2);
  \draw [->,line width=0.03cm, color=blue]  (y3)  to (x3);
  \draw [->,line width=0.03cm, color=red]  (y1) [out=60,in=-60] to (x1);
  \draw [->,line width=0.03cm, color=red]  (x2) [out=30,in=210] to (y2);
  \node () at (2,-1) {$D'$};
  
  \node (yy1) at (7,1) [vertexB] {$y_1$};
  \node (xx2) at (9,5) [vertexB] {$x_2$};
  \node (xx3) at (11,5) [vertexB] {$x_3$};
  \node (yy3) at (11,1) [vertexB] {$y_3$};
  \node (xx1) at (7,5) [vertexB] {$x_1$};
  \node (yy2) at (9,1) [vertexB] {$y_2$};
  \node () at (9,-1) {$D''$};
  \draw [->,line width=0.03cm, color=red] (yy1) to (xx1);
  \draw [->,line width=0.03cm, color=red] (xx1) to (yy2);
  \draw [->,line width=0.03cm, color=red] (yy3) to (xx3);
  \draw [->,line width=0.03cm, color=red] (xx2) to (yy2);
  \draw [->,line width=0.03cm, color=red] (xx3) to (yy1);
  \draw [->,line width=0.03cm, color=blue] (yy3) to (xx1);
  \draw [->,line width=0.03cm, color=blue] (yy2) to (xx3);
  \draw [->,line width=0.03cm, color=blue] (xx1) [out=210,in=150]to (yy1);
  \draw [->,line width=0.03cm, color=blue] (yy2) [out=75,in=-75] to (xx2);
  \draw [->,line width=0.03cm, color=blue] (xx3) [out=-45, in=45] to (yy3);
\end{tikzpicture}
\caption{The two possible digraphs $D'$ and $D''$ with good
  pairs.}\label{fig:6ok}
\end{center}
\end{figure}
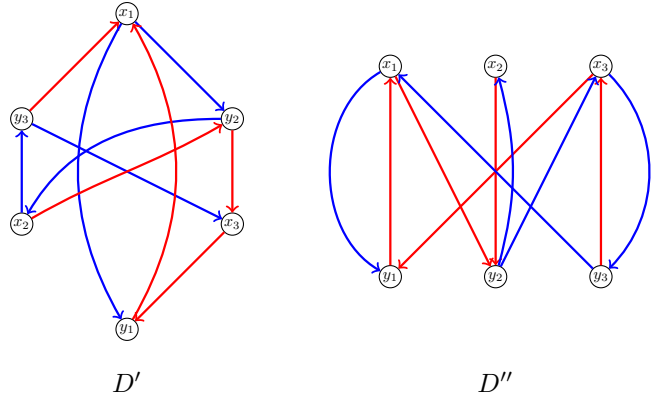
    \end{proof}

\section{Remarks and open problems}

\begin{problem}
What is the smallest number $n$ of vertices in a 2-arc-strong digraph which has no good pair?
\end{problem}

By Proposition \ref{prop:n=6OK} and the example $H_4$ in Figure \ref{fig:H4} we know that $7\leq n\leq 10$.\\

\jbj{The infinite family ${\cal W}'_n$ in the proof of Proposition \ref{prop:infalpha3} shows that there are infinitely many 2-arc-strong digraphs with independence number 3 which have only  a linear number of pairs $s,t$ for which arc-disjoint branchings $B_s^+,B_t^-$ exists (for each $W'_n$ with $n\geq 10$ we can take $t\in V(S)$  arbitrary and let $s=b_1$). This leads to the following question.}

\jbj{\begin{problem}
Does  there exist a digraph with independence number $3$ and arc-strong connectivity $2$ without a good pair?
\end{problem}}
    
\begin{conjecture}\label{conj:2asalpha2}
  Every 2-arc-strong digraph $D=(V,A)$ with $\alpha{}(D)=2$ has a pair of arc-disjoint branchings
  $B_s^+,B_s^-$ for every choice of $s\in V$.
\end{conjecture}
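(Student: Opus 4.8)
The statement is the rooted strengthening of Theorem~\ref{mainX}: instead of merely producing some good pair, one must produce, for every prescribed $s$, an out-branching and an in-branching that are \emph{both} rooted at $s$ and arc-disjoint. The plan is to revisit the proof of Theorem~\ref{mainX} and upgrade each construction so that it controls the roots. Two features should guide the attack. First, the problem is self-dual: reversing all arcs exchanges the roles of $B_s^+$ and $B_s^-$ while fixing $s$, $\alpha$ and $\lambda$, so it suffices to produce the pair up to this symmetry. Second, and crucially, the example $W$ of Proposition~\ref{prop:W} shows that the corresponding \emph{distinct}-root statement is already false for $\alpha=2$; hence any proof must genuinely exploit that the two roots coincide, and cannot proceed by fixing one root and then the other independently.

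The main engine should again be Theorem~\ref{thm:CMthm}. When $D$ has a directed Hamiltonian cycle $C$, I would break $C$ at $s$ to obtain a Hamiltonian path $P$ starting at $s$; then $P$ is an out-branching rooted at $s$, and it remains to extract an in-branching rooted at $s$ from the leftover digraph $D'=D-A(P)$. This amounts to showing that $s$ is an in-generator of $D'$, i.e. that every vertex still reaches $s$ after the path arcs are deleted; if so, then $(B^-,P)$, with $B^-$ any in-branching of $D'$ rooted at $s$, is the desired good $(s,s)$-pair. As a base case one first establishes the statement for semicomplete digraphs with $\delta^0\ge 2$, where Theorem~\ref{thm:pancyclic} and Theorem~\ref{nonEXCEPTION} give ample room: here no vertex is an exception root, and pancyclicity should let one route the in-branching into $s$ along arcs disjoint from $P$. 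These same-root semicomplete pairs then feed the clique-based reductions (the analogues of Claims~\ref{claim1}--\ref{claimNO2C} and of Lemma~\ref{extend}), each of which must be re-proved so that the good pair it manufactures has both roots at the target vertex.

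The hard part is exactly the root bookkeeping in the final low-order cases ($n\in\{7,8\}$ after the reductions, together with the two-cycle-cover alternative of Theorem~\ref{thm:CMthm} when no Hamiltonian cycle exists). Guaranteeing that $D-A(P)$ keeps $s$ as an in-generator is not automatic: a Hamiltonian path can repeatedly cross a small in-cut separating some set $X\not\ni s$, and $\lambda(D)\ge 2$ alone does not prevent both arcs of such a cut from lying on $P$. The constraint $\alpha(D)\le 2$ must be brought to bear to control how $P$ interacts with these cuts --- for instance, to show that one can always choose $C$, or locally re-route $P$ near $s$, so that at least one arc entering each relevant set survives in $D'$. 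I expect the genuinely delicate step to be the case where $D-A(P)$ splits into two strong components, mirroring the end of the proof of Theorem~\ref{mainX}, but now with the added demand that the rebuilt in-branching terminate at the very same $s$ from which $P$ emanates; reconciling these two requirements is where the coincidence of roots, and the failure of the distinct-root version witnessed by $W$, becomes decisive.
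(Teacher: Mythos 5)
The statement you are attempting is Conjecture~\ref{conj:2asalpha2}; the paper offers no proof of it and explicitly poses it as an open problem, so there is nothing to compare your argument against --- and, as written, your text is a plan of attack rather than a proof. Every load-bearing step is deferred: that the semicomplete base case ``should'' follow from pancyclicity, that the clique reductions (the analogues of Claims~\ref{claim1}--\ref{claimNO2C} and Lemma~\ref{extend}) ``must be re-proved'' with both roots pinned at $s$, that one ``can always choose $C$, or locally re-route $P$ near $s$'' so that $s$ stays an in-generator of $D-A(P)$. None of these is established, and the last one is precisely the crux: you yourself observe that $\lambda(D)\geq 2$ does not prevent a Hamiltonian path from using both arcs entering some set $X\not\ni s$, and you give no mechanism by which $\alpha(D)\leq 2$ repairs this. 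The two-cycle-cover branch of Theorem~\ref{thm:CMthm} (when $D$ is not Hamiltonian) is not addressed at all beyond being named.

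There is also a concrete structural obstruction your outline never confronts. Figure~\ref{fig:badmulti} in the paper exhibits a $2$-arc-strong directed \emph{multigraph} with independence number $2$ and no pair of arc-disjoint branchings $B_s^+,B_s^-$. Hence any correct proof of the conjecture must use, in an essential way, the fact that $D$ has no parallel arcs. Your strategy --- take a Hamiltonian path $P$ out of $s$, then find an in-branching to $s$ in $D-A(P)$, plus root-controlled clique reductions --- is stated entirely in terms of connectivity, independence number and Hamiltonicity, all of which the multigraph counterexample also satisfies (or can be arranged to satisfy), so the argument as sketched would ``prove'' a false statement. Until you can point to the step where simplicity of the digraph is invoked and the multigraph example breaks, the approach cannot be correct, let alone complete.
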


Figure \ref{fig:badmulti} shows that Conjecture \ref{conj:2asalpha2} does not hold for directed multigraphs (the example is Figure 4 in \cite{bangDM309}).

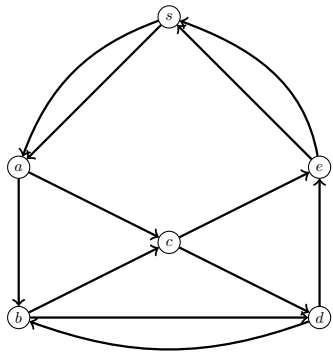
\begin{figure}[H]
\begin{center}
\tikzstyle{vertexB}=[circle,draw, minimum size=14pt, scale=0.6, inner sep=0.5pt]
\tikzstyle{vertexR}=[circle,draw, color=red!100, minimum size=14pt, scale=0.6, inner sep=0.5pt]

\begin{tikzpicture}[scale=1]
\node (s) at (2,4) [vertexB] {$s$};
\node (a) at (0,2) [vertexB]{$a$};
\node (b) at (0,0) [vertexB]{$b$};
\node (c) at (2,1) [vertexB]{$c$};
\node (d) at (4,0) [vertexB]{$d$};
\node (e) at (4,2) [vertexB]{$e$};

\draw [->, line width=0.03cm] (a) to (b);
\draw [->, line width=0.03cm] (a) to (c);
\draw [->, line width=0.03cm] (b) to (c);
\draw [->, line width=0.03cm] (c) to (d);
\draw [->, line width=0.03cm] (c) to (e);
\draw [->, line width=0.03cm] (d) to (e);
\draw [->, line width=0.03cm] (s) to (a);
\draw [->, line width=0.03cm] (e) to (s);
\draw [->, line width=0.03cm] (b) to (d);
\draw [->, line width=0.03cm] (d) to [out=200,in=-20] (b);
\draw [->, line width=0.03cm] (e) to [out=100,in=-20] (s);
\draw [->, line width=0.03cm] (s) to [out=200,in=70] (a);

\end{tikzpicture}
\end{center}
\caption{A 2-arc-strong multigraph $D$ with $\alpha{}(D)=2$ and no pair of arc-disjoint branchings $B_s^+,B_s^-$.}\label{fig:badmulti}
\end{figure}

\begin{conjecture}
  Every 3-arc-strong digraph $D=(V,A)$ with $\alpha{}(D)=2$ has a pair of arc-disjoint branchings
  $B_s^+,B_t^-$ for every choice of $s,t\in V$.
\end{conjecture}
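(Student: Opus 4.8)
The plan is to strengthen the argument behind Theorem~\ref{mainX}, whose Hamiltonian-path proof delivers a good pair but exercises no control over the two roots. Here $s$ and $t$ are prescribed, and the single extra unit of arc-connectivity (three rather than two) is exactly what should buy back that control. That extra unit is genuinely necessary: the digraph $W$ of Proposition~\ref{prop:W} is $2$-arc-strong and co-bipartite (so $\alpha(W)\le 2$) yet has no arc-disjoint $B_{c_2}^+,B_{c_1}^-$. Since $D$ is $3$-arc-strong it is in particular strong, so Theorem~\ref{thm:CMthm} applies and supplies the Hamiltonian structure to work with.

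The cleanest reduction I would aim for is a \emph{deletion lemma}: every $3$-arc-strong digraph $D$ with $\alpha(D)\le 2$ admits, for each vertex $s$, an out-branching $B_s^+$ such that $D-A(B_s^+)$ is still strong. This would settle the conjecture at once, for in the strong digraph $D-A(B_s^+)$ every vertex is an in-generator, so for the prescribed $t$ there is an in-branching $B_t^-$ in $D-A(B_s^+)$, and $(B_t^-,B_s^+)$ is the required arc-disjoint pair, simultaneously for all choices of $s,t$. The threshold $3$ is forced by $W$: no out-branching rooted at $c_2$ can be deleted from $W$ while preserving strong connectivity, since otherwise $c_1$ would be an in-generator of the remainder, contradicting Proposition~\ref{prop:W}.

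To prove the deletion lemma I would try two complementary routes. First, via branching packing: since $\lambda(D)\ge 3$, Edmonds' theorem yields three arc-disjoint out-branchings rooted at $s$, and I would select (or recombine the arcs of) one of them into an out-branching whose removal leaves every directed cut non-empty, using $\alpha(D)\le 2$ to control, across each tight cut $(X,\overline{X})$ with $s\in X$, the number of tree arcs leaving $X$: the terminal strong components of $\langle \overline{X}\rangle$ are cliques by $\alpha\le 2$, which limits how often the out-tree must re-enter $\overline{X}$. Second, via Hamiltonian paths: find a Hamiltonian $(s,t)$-path $Q$, use it as $B_s^+$, and show $t$ is an in-generator of $D-A(Q)$; symmetrically one could use $Q$ as $B_t^-$ and seek $B_s^+$ in the remainder. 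If neither direction succeeds, $D-A(Q)$ has at least two initial and at least two terminal strong components, and I would run the final case analysis of the proof of Theorem~\ref{mainX}, now exploiting the extra cut arc guaranteed by $\lambda(D)\ge 3$ to reroute one branching through the prescribed roots.

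The hard part, and the reason the statement is still only a conjecture, is twofold. There is no Hamiltonian-connectedness guarantee for digraphs with $\alpha\le 2$, so a Hamiltonian $(s,t)$-path need not exist; the two-cycle-cover alternative of Theorem~\ref{thm:CMthm} then has to be treated separately, and routing both prescribed roots through a two-cycle cover is awkward. More seriously, the deletion lemma itself is delicate: a single out-branching can be forced to cross a tight $3$-cut more than once, so any cut-count that invokes only $\lambda\ge 2$ must fail, and a valid argument has to use all three arc-disjoint paths in an essential way — precisely as the counterexample $W$ at connectivity $2$ dictates.
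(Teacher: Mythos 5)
This statement is posed in the paper as an open conjecture in the ``Remarks and open problems'' section; the paper contains no proof of it, so there is nothing to compare your argument against on the paper's side. What you have written is, by your own admission in the final paragraph, a research plan rather than a proof, and it should not be presented as settling the statement. Your framing of the problem is sound: the digraph $W$ of Proposition~\ref{prop:W} is indeed co-bipartite, hence has independence number at most $2$, and is $2$-arc-strong with no arc-disjoint $B_{c_2}^+,B_{c_1}^-$, so arc-connectivity $3$ is the right threshold to aim for; and your ``deletion lemma'' (an out-branching $B_s^+$ whose removal leaves $D$ strong) would immediately imply the conjecture, since every vertex of a strong digraph is an in-generator.

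The genuine gap is that neither of your two routes is carried out, and each stalls at a concrete obstruction. For the deletion lemma, Edmonds' theorem does give three arc-disjoint out-branchings rooted at $s$, but you give no argument that one of them (or any recombination of their arcs) can be chosen so that every directed cut of $D$ retains an arc after deletion; a single out-branching can cross a tight $3$-cut with all three of its arcs, and the observation that terminal components of $\langle \overline{X}\rangle$ are cliques does not by itself bound the number of tree arcs crossing the cut. The deletion lemma is therefore not a reduction but a restatement of the difficulty. For the Hamiltonian route, Theorem~\ref{thm:CMthm} guarantees a Hamiltonian path but not one with prescribed endpoints $s$ and $t$, and the two-cycle cover alternative is not handled at all; the final case analysis of Theorem~\ref{mainX} that you propose to rerun produces branchings whose roots are the ends of $P$, not the prescribed $s$ and $t$, so an additional rerouting argument is needed and is not supplied. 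In short, the statement remains open, and your proposal correctly identifies why, but does not close it.
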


\begin{problem}
  \label{prob:disjB}
  What is the complexity of deciding whether a digraph $D=(V,A)$ with
  $\alpha{}(D)=2$ has an out-branching and an in-branching that are
  arc-disjoint?
\end{problem}

\begin{problem}
\label{prob:disjBroots}
  What is the complexity of deciding whether a digraph $D=(V,A)$ with $\alpha{}(D)=2$  has an out-branching $B_s^+$ and and in-branching $B_t^-$ that are arc-disjoint when $s,t\in V$ are prescribed? 
\end{problem}

It was shown in \cite{fradkinJCT110} that one can decide in polynomial time whether a digraph of independence number 2 has arc-disjoint paths $P_1,P_2$, where $P_i$ is an $(s_i,t_i)$-path for $i=1,2$, where $s_1,s_2,t_1,t_2$ are part of the input. This suggests that Problems \ref{prob:disjB} and \ref{prob:disjBroots} could be polynomial-time solvable.\\

\noindent{}{\bf Acknowledgment:} The authors thank Carsten Thomassen for interesting discussions on arc-disjoint in- and out-branchings in digraphs of bounded independence number.

\end{document}